\newtheorem{definition}{Def\text{}inition}[section]
\newtheorem{theorem}[definition]{Theorem}
\newtheorem{example}[definition]{Example}
\newtheorem{lemma}[definition]{Lemma}
\newtheorem{proposition}[definition]{Proposition}
\newtheorem{corollary}[definition]{Corollary}
\newtheorem{question}[definition]{Question}
\begin{document}

\title{ \bf \large Iterations and unions of star selection properties on topological spaces\footnote{The first-listed author was supported for this research by Consejo Nacional de Ciencia y Tecnología (CONACYT, México), Scholarship 769010.}}
\author{ \small JAVIER CASAS-DE LA ROSA, WILLIAM CHEN-MERTENS, SERGIO GARCIA-BALAN}
\date{}
\maketitle

\begin{abstract} 
In this paper, we investigate what selection principles properties are possessed by small (with respect to the bounding and dominating numbers) unions of spaces with certain (star) selection principles.. Furthermore, we give several results about iterations of these properties and weaker properties than paracompactness. In addition, we study the behaviour of these iterated properties on $\Psi$-spaces. Finally, we show that, consistently, there is a normal star-Menger space that is not strongly star-Menger; this example answers a couple of questions posed in \cite{CGS}.
\end{abstract}

\emph{Key words.} Menger, star Menger, strongly star Menger, Hurewicz, star Hurewicz, strongly star Hurewicz, star selection principles, $\Psi$-spaces, iterated stars.

\emph{2020 Mathematics Subject Classification}: Primary 54D20; Secondary 54A35.

\section{Introduction}

\subsection{Notation and terminology}

Let $X$ be a set and let $\mathcal{U}$ be a collection of subsets of $X$. If $A$ is a subset of $X$, then the star of $A$ with respect to $\mathcal{U}$, denoted by $St(A,\mathcal{U})$, is the set $\bigcup\{U\in\mathcal{U}:U\cap A\neq\emptyset\}$; for $A=\{x\}$ with $x\in X$, we write $St(x,\mathcal{U})$ instead of $St(\{x\},\mathcal{U})$. We denote by $[X]^{<\omega}$ the collection of all finite subsets of $X$.  Throughout this paper, all spaces are assumed to be regular, unless a specific separation axiom is indicated. For notation and terminology, we refer to \cite{E}.

We recall some classical star covering properties following the terminology of \cite{DRRT}. A space $X$ is said to be strongly starcompact (strongly star-Lindel\"{o}f), briefly $SSC$ ($SSL$), if for every open cover $\mathcal{U}$ of $X$ there exists a finite (countable) subset $F$ of $X$ such that $St(F,\mathcal{U})=X$. A space $X$ is starcompact (star-Lindel\"{o}f), briefly $SC$ ($SL$), if for every open cover $\mathcal{U}$ of $X$ there exists a finite (countable) subset $\mathcal{V}$ of $\mathcal{U}$ such that $St(\bigcup\mathcal{V},\mathcal{U})=X$. It is well-known that countable compactness and strongly starcompactness are equivalent for Hausdorff spaces (see \cite{DRRT}). We refer the reader to the survey of Matveev \cite{M} for a more detailed treatment of these star covering properties.

Recall that a space $X$ is said to be \emph{metacompact} (\emph{metaLindel\"{o}f}) if every open cover $\mathcal{U}$ of $X$ has a point-finite (point-countable) open refinement $\mathcal{V}$. Further, a space $X$ is said to be \emph{paracompact} (\emph{paraLindel\"{o}f}) if every open cover $\mathcal{U}$ of $X$ has a locally-finite (locally-countable) open refinement $\mathcal{V}$. For more information about the relationships among these covering properties (and others), we refer the reader to \cite{Burke}.\\

Recall that for $f,g\in\omega^\omega$, $f\leq^* g$ means that $f(n)\leq g(n)$ for all but finitely many $n$ (and $f\leq g$ means that $f(n)\leq g(n)$ for all $n$). A subset $B$ of $\omega^\omega$ is \emph{bounded} if there is $g\in\omega^\omega$ such that $f\leq^*g$ for each $f\in B$. A subset $D$ of $\omega^\omega$ is \emph{dominating} if for each $g\in\omega^\omega$ there is $f\in D$ such that $g\leq^*f$. The minimal cardinality of an unbounded subset of $\omega^\omega$ is denoted by $\mathfrak{b}$, and the minimal cardinality of a dominating subset of $\omega^\omega$ is denoted by $\mathfrak{d}$. The family of all meager subsets of $\mathbb{R}$ is denoted by $\mathcal{M}$ and the minimum of the cardinalities of subfamilies $\mathcal{U}\subset\mathcal{M}$ such that $\bigcup\mathcal{U=\mathbb{R}}$ is denoted by $cov(\mathcal{M})$.\\

Recall that a family $\mathcal{A}$ of infinite subsets of $\omega$ is almost disjoint (a.d., for short) if the intersection of any two distinct sets in $\mathcal{A}$ is finite. Let $\mathcal{A}$ be an a.d. family, we consider $\Psi(\mathcal{A})=\mathcal{A}\cup\omega$ with the following topology: the points of $\omega$ are isolated and a basic neighbourhood of a point $a\in\mathcal{A}$ is of the form $\{a\}\cup (a\setminus F)$, where $F$ is a finite subset of $\omega$. Then $\Psi(\mathcal{A})$ is called a $\Psi$-space (see \cite{PM}).

\subsection{Classical (star) selection principles}

Likely, among classical selection principles, the most well-known are the Menger, Rothberger and Hurewicz properties. Let us recall those notions and its star versions. Given a topological space $X$, we denote by $\mathcal{O}$ the collection of all open covers of $X$ and by $\Gamma$ the collection of all $\gamma$-covers of $X$. Recall that an open cover $\mathcal{U}$ of $X$ is a $\gamma$-cover if it is infinite and each $x\in X$ belongs to all but finitely many elements of $\mathcal{U}$. A space $X$ is Menger ($M$) if for each sequence $\{\mathcal{U}_n:n\in\omega\}$ of open covers of $X$, there is a sequence $\{\mathcal{V}_n:n\in\omega\}$ such that for each $n\in\omega$, $\mathcal{V}_n$ is a finite subset of $\mathcal{U}_n$ and $\{\bigcup\mathcal{V}_n:n\in\omega\}$ is an open cover of $X$ (see \cite{Menger}). A space $X$ is Rothberger ($R$) if for each sequence $\{\mathcal{U}_n:n\in\omega\}$ of open covers of $X$, there is a sequence $\{U_n:n\in\omega\}$ such that for each $n\in\omega$, $U_n\in\mathcal{U}_n$ and $\{U_n:n\in\omega\}$ is an open cover of $X$ (see \cite{R}). A space $X$ is Hurewicz ($H$) if for each sequence $\{\mathcal{U}_n:n\in\omega\}$ of open covers of $X$, there is a sequence $\{\mathcal{V}_n:n\in\omega\}$ such that for each $n\in\omega$, $\mathcal{V}_n$ is a finite subset of $\mathcal{U}_n$ and for each $x\in X$, $x\in\bigcup\mathcal{V}_n$ for all but finitely many $n$ (see \cite{H}). The following star versions for the cases Menger and Rothberger were introduced in \cite{K} and the star versions for the Hurewicz case were defined in \cite{BCK}.

\begin{definition}
A space $X$ is:
\begin{enumerate}
\item star-Menger ($SM$) if for each sequence $\{\mathcal{U}_n:n\in\omega\}$ of open covers of $X$, there is a sequence $\{\mathcal{V}_n:n\in\omega\}$ such that for each $n\in\omega$, $\mathcal{V}_n$ is a finite subset of $\mathcal{U}_n$ and $\{St(\bigcup\mathcal{V}_n,\mathcal{U}_n):n\in\omega\}$ is an open cover of $X$.
\item strongly star-Menger ($SSM$) if for each sequence $\{\mathcal{U}_n:n\in\omega\}$ of open covers of $X$, there exists a sequence $\{F_n:n\in\omega\}$ of finite subsets of $X$ such that $\{St(F_n,\mathcal{U}_n):n\in\omega\}$ is an open cover of $X$.
\item star-Rothberger ($SR$) if for each sequence $\{\mathcal{U}_n:n\in\omega\}$ of open covers of $X$, there are $U_n\in\mathcal{U}_n$, $n\in\omega$, such that $\{St(U_n, \mathcal{U}_n):n\in\omega\}$ is an open cover of $X$.
\item strongly star-Rothberger ($SSR$) if for each sequence $\{\mathcal{U}_n:n\in\omega\}$ of open covers of $X$, there exists a sequence $\{x_n:n\in\omega\}$ of elements of $X$ such that $\{St(x_n, \mathcal{U}_n):n\in\omega\}$ is an open cover of $X$.
\item star-Hurewicz ($SH$) if for each sequence $\{\mathcal{U}_n:n\in\omega\}$ of open covers of $X$, there is a sequence $\{\mathcal{V}_n:n\in\omega\}$ such that for each $n\in\omega$, $\mathcal{V}_n$ is a finite subset of $\mathcal{U}_n$ and for each $x\in X$, $x\in St(\bigcup\mathcal{V}_n,\mathcal{U}_n)$ for all but finitely many $n$.
\item strongly star-Hurewicz ($SSH$) if for each sequence $\{\mathcal{U}_n:n\in\omega\}$ of open covers of $X$, there exists a sequence $\{F_n:n\in\omega\}$ of finite subsets of $X$ such that for each $x\in X$, $x\in St(F_n,\mathcal{U}_n)$ for all but finitely many $n$.
\end{enumerate}
\end{definition}

It is worth to mention that for paracompact Hausdorff spaces the three Menger-type properties, $SM$, $SSM$ and $M$ are equivalent and the same situation holds for the three Rothberger-type properties and the three Hurewicz-type properties (see \cite{K} and \cite{BCK}). Even more, those equivalences still true for paraLindel\"of spaces (see \cite{CGS}).

The following diagram shows the relationships among these properties (in the diagram $C$ and $L$ are used to denote compactness and the Lindel\"{o}f property, respectively).  We mention that none of the arrows in the following diagram reverse. We refer the reader to \cite{K_survey} to see the current state of knowledge about these relationships with others. 

\begin{figure}[h!]
\[
\begin{tikzcd}[row sep=1em, column sep = 1em]
C \arrow[rr] \arrow[dd,swap] && H \arrow[rr] \arrow[dd] && M  \arrow[rr,<-]  \arrow[dr] \arrow[dd] &&
  R \arrow[dd ] \\
&& && & L &&  \\
SSC \arrow[rr] \arrow[dd] && SSH \arrow[rr] \arrow[dd] && SSM \arrow[rr,<-] \arrow[dr]  \arrow[dd]  && SSR \arrow[dd]  \\
&& && & SSL  \arrow[uu,<-,crossing over]&&  \\
SC \arrow[rr] && SH \arrow[rr] && SM \arrow[rr,<-] \arrow[dr] && SR \\
&& && & SL  \arrow[uu,<-,crossing over]&&  
\end{tikzcd}
\]
\caption{Star selection principles.}
\end{figure}
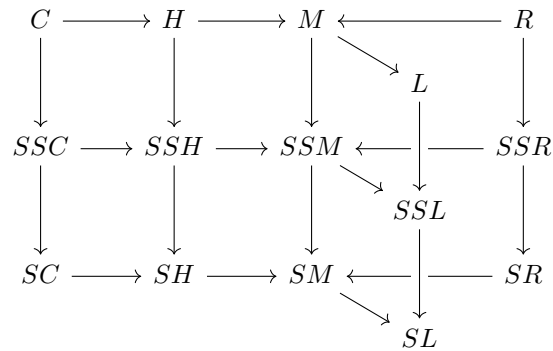

\section{Small unions of some star spaces}\label{sectionsmallunions}

\noindent In \cite{Ta11} Tall proved that if a space $X$ is Lindel\"of and it can be written as a union of less than $\mathfrak{d}$ compact spaces, then $X$ is Menger. It turns out that we can replace ``compact'' by ``star-Hurewicz'' in Tall's result (see Proposition \ref{(L,(<d,SH))} below)\footnote{In \cite{CA}, the authors also use the idea of unions of size less than $\mathfrak{d}$ many Hurewicz-type spaces to obtain some results about star-Scheepers spaces.}, or we can replace ``$\mathfrak{d}$'' and ``compact'' by ``$\mathfrak{b}$'' and  ``star-Menger'' (Proposition \ref{(L,(<b,SM))}). Furthermore, a Lindel\"of space that can be written as a union of less than $\mathfrak{b}$ star-Hurewicz spaces, is Hurewicz (Proposition \ref{(L,(<b,SH))}). These results are contained in Theorem \ref{LindelofUnions} below.\\

\noindent In addition, we investigated what happens if instead of starting with a Lindel\"of space that can be written as some small union, we consider a star-Lindel\"of space or a strongly star-Lindel\"of space or an absolutely strongly star-Lindel\"of space (see Definition \ref{aSSLdefinition} below). Some other interesting relationships were obtained and they are described in Theorem \ref{starLindelofUnions} and Theorem \ref{aSSLindelofUnions} below. Let us first introduce some notation that allows to present these results in an organized manner\footnote{Preliminary versions of some results in this section are contained in the PhD Dissertation of the third listed author (see \cite{GB}).}.

\begin{definition}
Let $X$ be any space, $A$ and $B$ denote some properties and $\kappa$ is some cardinal. $$\big(A,(<\kappa,B)\big)$$
stands for ``$X$ satisfies property $A$ and it can be written as a union of less than $\kappa$ spaces each of them satisfying property $B$''. 
\end{definition}

\noindent For instance, if $L,C$ and $M$ denote Lindel\"of, compact and Menger, respectively, then Tall's result can be written as ``$\big(L,(<\mathfrak{d},C)\big) \rightarrow M$''. More in general, we have:

\begin{theorem}
\label{LindelofUnions}
For any space $X$ the following holds:
\[
\begin{tikzcd}[row sep=2em, column sep=1em]
&&  \scalebox{0.8}{$\big(L,(<\mathfrak{d},SH)\big)$} \arrow[drrr, bend left ]   \arrow[dl,<-,swap,]&& & \\
&\scalebox{0.8}{$\big(L,(<\mathfrak{b},SH)\big)$}\arrow[rr] \arrow[dl]&&\scalebox{0.8}{$H$} \arrow[rr] &&\scalebox{0.8}{$M$} \\
\scalebox{0.8}{$\big(L,(<\mathfrak{b},SM)\big)$} \arrow[urrrrr, bend right] & && &&
\end{tikzcd}
\]
\end{theorem}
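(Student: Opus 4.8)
The plan is to establish the diagram arrow by arrow, noting that several arrows are immediate. The implications $\big(L,(<\mathfrak{b},SH)\big) \to \big(L,(<\mathfrak{b},SM)\big)$ and $\big(L,(<\mathfrak{d},SH)\big) \to$ (wait, the diagram shows $\big(L,(<\mathfrak{b},SH)\big) \to \big(L,(<\mathfrak{d},SH)\big)$ via the reversed arrow, which holds because $\mathfrak{b} \le \mathfrak{d}$ and fewer-than-$\mathfrak{b}$ unions are a fortiori fewer-than-$\mathfrak{d}$ unions) are trivial monotonicity statements in the cardinal parameter. Likewise $H \to M$ is classical. So the real content is three implications: (i) $\big(L,(<\mathfrak{d},SH)\big) \to M$, (ii) $\big(L,(<\mathfrak{b},SH)\big) \to H$, and (iii) $\big(L,(<\mathfrak{b},SM)\big) \to M$. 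I would prove these three and observe the rest follows formally (e.g. $\big(L,(<\mathfrak{b},SH)\big)\to H\to M$ recovers a composite, and $\big(L,(<\mathfrak{b},SH)\big)\to\big(L,(<\mathfrak{b},SM)\big)\to M$ is the long bottom bend).

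For (i), I would mimic Tall's argument. Write $X=\bigcup_{\alpha<\kappa}X_\alpha$ with $\kappa<\mathfrak{d}$ and each $X_\alpha$ star-Hurewicz, and let $\{\mathcal{U}_n:n\in\omega\}$ be a sequence of open covers of $X$; since $X$ is Lindel\"of we may assume each $\mathcal{U}_n$ is countable, say $\mathcal{U}_n=\{U_{n,k}:k\in\omega\}$. For each $\alpha$, apply star-Hurewicz of $X_\alpha$ to the sequence $\{\mathcal{U}_n\restriction X_\alpha\}$ to get finite $\mathcal{V}^\alpha_n\subseteq\mathcal{U}_n$ with each point of $X_\alpha$ lying in $St(\bigcup\mathcal{V}^\alpha_n,\mathcal{U}_n)$ for all but finitely many $n$; encode the ``size/index needed'' as a function $f_\alpha\in\omega^\omega$, e.g. $f_\alpha(n)=\max\{k: U_{n,k}\in\mathcal{V}^\alpha_n\}$. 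Since $\kappa<\mathfrak{d}$, the family $\{f_\alpha:\alpha<\kappa\}$ is not dominating, so there is $g\in\omega^\omega$ with $g\not\le^* f_\alpha$ for every $\alpha$, i.e. $g(n)>f_\alpha(n)$ for infinitely many $n$, for each $\alpha$. Setting $\mathcal{W}_n=\{U_{n,k}:k\le g(n)\}$, a finite subfamily of $\mathcal{U}_n$, one checks that $\{St(\bigcup\mathcal{W}_n,\mathcal{U}_n):n\in\omega\}$ — or even, reindexing to get genuine members rather than stars — actually covers $X$: given $x\in X_\alpha$, for all large $n$ we have $x\in St(\bigcup\mathcal{V}^\alpha_n,\mathcal{U}_n)$, and for infinitely many of those $n$ also $g(n)>f_\alpha(n)$ so $\bigcup\mathcal{V}^\alpha_n\subseteq\bigcup\mathcal{W}_n$, giving $x\in St(\bigcup\mathcal{W}_n,\mathcal{U}_n)$. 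To upgrade the star-cover to an honest Menger selection one uses that a Lindel\"of space with a star-Menger-type selection for countable covers is Menger — or more directly, one replaces each $\mathcal{W}_n$ by the corresponding finite set of members $U_{n,k}$ meeting $\bigcup\mathcal{V}^\alpha_n$; I would handle this bookkeeping carefully since it is where the ``star'' is discharged.

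For (ii) the scheme is the same but now $\kappa<\mathfrak{b}$, so $\{f_\alpha:\alpha<\kappa\}$ is bounded: there is $g$ with $f_\alpha\le^* g$ for all $\alpha$. Then for each $\alpha$ and all but finitely many $n$ we simultaneously have $\bigcup\mathcal{V}^\alpha_n\subseteq\bigcup\mathcal{W}_n$ (from $f_\alpha(n)\le g(n)$) and $x\in St(\bigcup\mathcal{V}^\alpha_n,\mathcal{U}_n)$, hence $x\in St(\bigcup\mathcal{W}_n,\mathcal{U}_n)$ for all but finitely many $n$ — exactly the Hurewicz-type ``eventually'' conclusion, again converted from a star statement to the genuine Hurewicz selection using the Lindel\"of hypothesis. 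For (iii), one runs the $\mathfrak{b}$ argument starting from star-Menger pieces: star-Menger gives $\{St(\bigcup\mathcal{V}^\alpha_n,\mathcal{U}_n):n\}$ covering $X_\alpha$, record $f_\alpha$, bound it by $g$ since $\kappa<\mathfrak{b}$, and conclude $\{St(\bigcup\mathcal{W}_n,\mathcal{U}_n):n\}$ covers $X$, then discharge the star via Lindel\"ofness to get Menger.

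The main obstacle I anticipate is not the cardinal-characteristic combinatorics — that is the routine Tall-style pigeonhole — but rather the ``discharging the star'' step: each $X_\alpha$ is only star-Hurewicz (resp. star-Menger), so the selections $\mathcal{V}^\alpha_n$ control $St(\bigcup\mathcal{V}^\alpha_n,\mathcal{U}_n)$, not $\bigcup\mathcal{V}^\alpha_n$ itself, and I must convert a cover by stars $St(\bigcup\mathcal{W}_n,\mathcal{U}_n)$ of the whole space $X$ into a genuine Menger/Hurewicz selection. The clean way is to invoke the standard fact that for Lindel\"of $X$ the star-Menger (resp. star-Hurewicz) property implies the Menger (resp. Hurewicz) property — indeed for Lindel\"of spaces $SM\Leftrightarrow M$ and $SH\Leftrightarrow H$, since Lindel\"of implies paraLindel\"of — so once $X$ is shown star-Menger (resp. star-Hurewicz) we are done; alternatively one does the refinement by hand. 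I would also take care that replacing each $\mathcal{U}_n$ by a countable subcover is legitimate (it is, by Lindel\"ofness) and that the finitely-many exceptional $n$ for different $\alpha$ cause no trouble since in the $\mathfrak{b}$ case we only claim an eventual conclusion per point and in the $\mathfrak{d}$ case only an infinitely-often one, both of which survive finite perturbations.
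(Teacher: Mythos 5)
Your overall plan is the paper's plan: reduce the diagram to the three implications $\big(L,(<\mathfrak{d},SH)\big)\to M$, $\big(L,(<\mathfrak{b},SH)\big)\to H$, $\big(L,(<\mathfrak{b},SM)\big)\to M$, run a Tall-style argument with the functions $f_\alpha$ coding the selections, and discharge the star at the end using the fact that a Lindel\"of (hence paracompact, spaces being regular) star-Menger/star-Hurewicz space is Menger/Hurewicz. Your first two implications are essentially Propositions \ref{(L,(<d,SH))} and \ref{(L,(<b,SH))} and are fine: there the pieces are star-Hurewicz, so each point of $Y_\alpha$ lies in $St(\bigcup\mathcal{V}^\alpha_n,\mathcal{U}_n)$ for all but finitely many $n$, and intersecting an eventual (or cofinite) set of $n$'s with the infinitely many $n$ where $g(n)>f_\alpha(n)$ (case $\mathfrak{d}$), or the cofinitely many where $f_\alpha(n)\le g(n)$ (case $\mathfrak{b}$), causes no loss.

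The third implication, however, has a genuine gap as you have written it. When the pieces $Y_\alpha$ are only star-Menger, the selection gives merely that $\{St(\bigcup\mathcal{V}^\alpha_n,\mathcal{U}_n):n\in\omega\}$ covers $Y_\alpha$: for a fixed $x\in Y_\alpha$ the set of $n$ capturing $x$ may be finite, indeed a single small $n$, and nothing prevents all such $n$ from lying below the threshold $m$ after which $f_\alpha(n)\le g(n)$. Your closing remark that ``in the $\mathfrak{b}$ case we only claim an eventual conclusion per point'' is exactly where the slip occurs: that is true for Hurewicz pieces in implication (ii), but for Menger pieces the per-point conclusion is existential, not eventual, and an existential witness need not survive discarding finitely many $n$. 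The paper closes this hole with Lemma \ref{largecoverss}: $S^*_{fin}(\mathcal{O},\mathcal{O})$ is equivalent to $S^*_{fin}(\mathcal{O},\mathcal{L})$, so the finite selections $\mathcal{V}^\alpha_n$ may be chosen so that for \emph{every} $m$ the tail $\{St(\bigcup\mathcal{V}^\alpha_n,\mathcal{U}_n):m\le n<\omega\}$ still covers $Y_\alpha$ (apply star-Menger to each tail sequence $\{\mathcal{U}_n:n\ge m\}$ and set $\mathcal{W}_n=\bigcup_{m\le n}\mathcal{V}^m_n$). With that strengthening, once $f_\alpha(n)\le g(n)$ for all $n\ge m$, some $n\ge m$ captures $x$, and the rest of your argument, including the final passage from star-Menger to Menger via Lindel\"ofness, goes through exactly as in Proposition \ref{(L,(<b,SM))}. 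You would need to add this large-cover (or tail-diagonalization) step to make implication (iii) correct.
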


\noindent The proof is divided as Propositions \ref{(L,(<d,SH))}, \ref{(L,(<b,SH))}  and \ref{(L,(<b,SM))}.
\begin{proposition}
\label{(L,(<d,SH))}
If $X$ is a Lindel\"of space and $X$ is the union of less than $\mathfrak{d}$ star-Hurewicz spaces, then $X$ is Menger.
\end{proposition}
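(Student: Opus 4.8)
The plan is to imitate Tall's argument, using star-Hurewicz as a substitute for compactness. Write $X=\bigcup_{\alpha<\kappa} X_\alpha$ where $\kappa<\mathfrak{d}$ and each $X_\alpha$ is star-Hurewicz (as a subspace). Fix a sequence $\{\mathcal{U}_n:n\in\omega\}$ of open covers of $X$; we must select finite $\mathcal{V}_n\subseteq\mathcal{U}_n$ so that $\bigcup_n(\bigcup\mathcal{V}_n)=X$. Since $X$ is Lindel\"of, we may first shrink each $\mathcal{U}_n$ to a countable subcover and re-index, so without loss of generality each $\mathcal{U}_n=\{U_{n,k}:k\in\omega\}$ is countable; it is harmless (and convenient) to assume each $\mathcal{U}_n$ is increasing in $k$. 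The idea is that for each $\alpha$, applying star-Hurewicz of $X_\alpha$ to the covers $\{\mathcal{U}_n\restriction X_\alpha\}$ will produce, for all but finitely many $n$, an index into $\mathcal{U}_n$ that ``covers $X_\alpha$ up to stars''; taking the sup over a cofinal set of $\alpha$ should be impossible because $\kappa<\mathfrak{d}$, forcing the stars to actually cover.

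In detail, first I would handle the bookkeeping that converts the star-Hurewicz selections into functions in $\omega^\omega$. For each $\alpha<\kappa$, apply the star-Hurewicz property of $X_\alpha$ to the sequence $\{\mathcal{U}_n\restriction X_\alpha : n\in\omega\}$ to get finite $\mathcal{W}^\alpha_n\subseteq\mathcal{U}_n\restriction X_\alpha$ such that each $x\in X_\alpha$ lies in $St(\bigcup\mathcal{W}^\alpha_n,\mathcal{U}_n\restriction X_\alpha)$ for all but finitely many $n$; pulling the $\mathcal{W}^\alpha_n$ back to finite subfamilies of $\mathcal{U}_n$ and using that $\mathcal{U}_n$ is increasing, encode $\mathcal{W}^\alpha_n$ by a single index, giving $f_\alpha\in\omega^\omega$ with $\bigcup\mathcal{W}^\alpha_n\subseteq U_{n,f_\alpha(n)}$. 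Since $\kappa<\mathfrak{d}$, the family $\{f_\alpha:\alpha<\kappa\}$ is not dominating, so there is $g\in\omega^\omega$ with $g\not\le^* f_\alpha$ for every $\alpha$, i.e. $g(n)>f_\alpha(n)$ for infinitely many $n$, for each $\alpha$. Now set $\mathcal{V}_n=\{U_{n,0},\dots,U_{n,g(n)}\}$ (a finite subset of $\mathcal{U}_n$), so that $\bigcup\mathcal{V}_n\supseteq U_{n,g(n)}\supseteq U_{n,f_\alpha(n)}$ whenever $g(n)>f_\alpha(n)$.

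The verification that $\{\bigcup\mathcal{V}_n:n\in\omega\}$ covers $X$ is where care is needed. Given $x\in X$, choose $\alpha$ with $x\in X_\alpha$. On the one hand, $x\in St(\bigcup\mathcal{W}^\alpha_n,\mathcal{U}_n\restriction X_\alpha)$ for all but finitely many $n$, say for all $n\ge N$. On the other hand, $g(n)>f_\alpha(n)$ for infinitely many $n$; pick such an $n\ge N$. Then $x$ is in some $U\in\mathcal{U}_n\restriction X_\alpha$ meeting $\bigcup\mathcal{W}^\alpha_n$ within $X_\alpha$; I need to convert this ``relative star'' statement into the ``absolute'' statement $x\in U_{n,g(n)}$. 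This is the main obstacle: a priori the star computed inside $X_\alpha$ only tells us that some $U\cap X_\alpha$ meets $(\bigcup\mathcal{W}^\alpha_n)\cap X_\alpha$, and $x\in U$; it does not immediately place $x$ in the particular set $U_{n,f_\alpha(n)}$. The fix is to choose the encoding more carefully at the start: rather than just bounding $\bigcup\mathcal{W}^\alpha_n$, let $f_\alpha(n)$ be large enough that $U_{n,f_\alpha(n)}$ contains \emph{every} member of $\mathcal{U}_n$ that is needed, i.e. replace ``$St(\bigcup\mathcal{W}^\alpha_n,\mathcal{U}_n\restriction X_\alpha)$'' by a genuine finite subfamily $\mathcal{V}^\alpha_n$ of $\mathcal{U}_n$ whose union (in $X$, not just in $X_\alpha$) contains that relative star, and let $f_\alpha(n)$ bound the indices of $\mathcal{V}^\alpha_n$; this is possible because each such star meets only finitely many members of the countable family $\mathcal{U}_n$ — more precisely, $St(\bigcup\mathcal{W}^\alpha_n,\mathcal{U}_n\restriction X_\alpha)\subseteq\bigcup\{U\in\mathcal{U}_n: U\cap\bigcup\mathcal{W}^\alpha_n\cap X_\alpha\ne\emptyset\}$ and the latter is a union of finitely many (since $\bigcup\mathcal{W}^\alpha_n$ is a finite union of sets, each of which, being open and meeting only... ) — here one uses that $\mathcal{W}^\alpha_n$ is finite and, crucially, that one may pre-shrink $\mathcal{U}_n$ so that each point lies in cofinally many but the relevant intersections stay finite; in the worst case one instead notes $St(\bigcup\mathcal{W}^\alpha_n,\mathcal{U}_n)$ itself is open and covered by $\mathcal{U}_n$, and picks $f_\alpha(n)$ so that $U_{n,f_\alpha(n)}$ contains $\bigcup\mathcal{W}^\alpha_{n}$ together with a point witnessing membership in the relative star. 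Once the encoding is set up so that ``$x$ in the relative star at stage $n$'' plus ``$g(n)>f_\alpha(n)$'' yields ``$x\in\bigcup\mathcal{V}_n$'', the proof closes: such $n$ exists, hence $x\in\bigcup\mathcal{V}_n$ for at least one $n$, so $\{\bigcup\mathcal{V}_n:n\in\omega\}$ is an open cover of $X$ and $X$ is Menger. I would therefore spend most of the write-up on getting this relative-to-absolute star encoding right, since the domination/non-domination step and the reduction to countable covers via Lindel\"ofness are routine.
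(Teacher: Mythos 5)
There is a genuine gap, and it is exactly at the point you flag as ``the main obstacle.'' From the star-Hurewicz data you only get that, for suitable $n$, the point $x$ lies in $St(\bigcup\mathcal{W}^\alpha_n,\mathcal{U}_n\restriction X_\alpha)$, and the relative-to-absolute passage is actually the easy part (a relative star is contained in the corresponding absolute star, since the traces are subsets of the full cover members). What cannot be done is the next step you propose: replacing the star by a finite subfamily $\mathcal{V}^\alpha_n\subseteq\mathcal{U}_n$ whose union contains it. The justification you sketch --- ``each such star meets only finitely many members of the countable family $\mathcal{U}_n$'' --- is false: the star of a finite union of open sets is the union of \emph{all} cover members meeting it, and there may be infinitely many of these, with no finite subfamily of $\mathcal{U}_n$ covering the star. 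The fallback (``pick $f_\alpha(n)$ so that $U_{n,f_\alpha(n)}$ contains $\bigcup\mathcal{W}^\alpha_n$ together with a witnessing point'') does not work either: a single cover member need not contain that finite union, and the witnessing point depends on $x$. A separate warning sign: your ``harmless'' normalization that each $\mathcal{U}_n$ is increasing makes the star of any nonempty set equal to all of $X$, so the star-Hurewicz hypothesis then carries no information at all; this is symptomatic of the fact that star-type selections cannot be converted into union-type selections by reindexing alone.

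The paper's proof stops where your argument is still sound: the non-dominating function $g$ yields finite $\mathcal{W}_n\subseteq\mathcal{U}_n$ (initial segments up to $g(n)$) with $\{St(\bigcup\mathcal{W}_n,\mathcal{U}_n):n\in\omega\}$ covering $X$, i.e.\ $X$ is \emph{star}-Menger, not yet Menger. The conversion from star-Menger to Menger is then done by a separate, nontrivial fact: $X$ is regular Lindel\"of, hence paracompact, and paracompact star-Menger spaces are Menger (Ko\v{c}inac); the mechanism there is to apply the star selection to open \emph{star-refinements} $\mathcal{B}_n\prec^*\mathcal{U}_n$, so that each $St(B,\mathcal{B}_n)$ is contained in a single member of $\mathcal{U}_n$ --- precisely the device your write-up is missing. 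To repair your argument along your own lines you would have to run the $\mathfrak{d}$-argument against star-refinements of the $\mathcal{U}_n$ from the outset (as in Theorem \ref{InParaLindelofSpacesnStarMengerimplieMenger} of the paper), or simply quote the paracompact case of the equivalence after establishing star-Menger, as the paper does.
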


\begin{proof}
Let $\kappa$ be a cardinal smaller than $\mathfrak{d}$ and put $X=\bigcup_{\alpha<\kappa}Y_\alpha$ with each $Y_\alpha$ being a star-Hurewicz space. Let $\{ \mathcal{U}_n : n \in \omega \}$ be a sequence of open covers of $X$. Since $X$ is Lindel\"of, we can assume that for each $n \in \omega$, $\mathcal{U}_n$ is countable and put $\mathcal{U}_n = \{U_n^i:i\in \omega\}$. Since each $Y_\alpha$ is star-Hurewicz, for each $\alpha<\kappa$, there exists a finite subset $\mathcal{V}_n^\alpha$ of $\mathcal{U}_n$ such that $\{St(\bigcup\mathcal{V}_n^\alpha, \mathcal{U}_n): n\in\omega\}$ is a $\gamma$-cover of $Y_\alpha$. Define, for each $\alpha<\kappa$, a function $f_\alpha$ as follows: for each $n\in\omega$, let $f_\alpha(n)=min\{i\in\omega: \mathcal{V}_n^\alpha\subseteq\{U_n^j:j\leq i\}\}$. Since the collection $\{f_\alpha: \alpha < \kappa \}$ has size less than $\mathfrak{d}$, there exists $g \in \omega^\omega$ such that for every $\alpha < \kappa$, $g \nleq ^* f_\alpha$. For each $n \in \omega$, let $\mathcal{W}_n =\{U_n^i: i \leq g(n) \}$.\\
\emph{Claim:} $\{St(\bigcup\mathcal{W}_n, \mathcal{U}_n): n\in\omega\}$ is an open cover of $X$.\\
Let $x \in X$. Then, there exists $\alpha < \kappa$ such that $x\in Y_\alpha$. Hence, there is $n_0\in\omega$ so that for every $n\geq n_0$, $x\in St(\bigcup\mathcal{V}_n^\alpha, \mathcal{U}_n)$. Since $g \nleq ^* f_\alpha$, we can take $n> n_0$ such that $g(n)>f_\alpha(n)$. Then $x\in St(\bigcup\mathcal{V}_n^\alpha, \mathcal{U}_n) \subseteq St(\bigcup_{j\leq f_\alpha(k)}U_n^j, \mathcal{U}_n)\subseteq St(\bigcup_{j\leq g(n)}U_n^j, \mathcal{U}_n)= St(\bigcup\mathcal{W}_n, \mathcal{U}_n)$. Therefore, the collection $\{St(\bigcup\mathcal{W}_n, \mathcal{U}_n):n\in\omega\}$ is an open cover of $X$. Thus, $X$ is star-Menger. Finally, since $X$ is Lindel\"{o}f, $X$ is a paracompact space and this allow us to conclude that $X$ is Menger.
\end{proof}

\begin{proposition}
\label{(L,(<b,SH))}
If $X$ is a Lindel\"of space and $X$ is the union of less than $\mathfrak{b}$ star-Hurewicz spaces, then $X$ is Hurewicz.
\end{proposition}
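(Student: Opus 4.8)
The plan is to run the argument of Proposition \ref{(L,(<d,SH))} almost verbatim, but replacing the appeal to $\mathfrak{d}$ and the non-domination of the auxiliary functions by an appeal to $\mathfrak{b}$ and their \emph{boundedness}; this single change is exactly what upgrades ``$\gamma$-cover at cofinally many stages'' to ``$\gamma$-cover at all but finitely many stages'', i.e. star-Menger to star-Hurewicz. So write $X=\bigcup_{\alpha<\kappa}Y_\alpha$ with $\kappa<\mathfrak{b}$ and each $Y_\alpha$ star-Hurewicz, fix a sequence $\{\mathcal{U}_n:n\in\omega\}$ of open covers of $X$, and, using that $X$ is Lindel\"of, arrange that each $\mathcal{U}_n=\{U_n^i:i\in\omega\}$ is countable.

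First I would apply star-Hurewiczness of each $Y_\alpha$ to the sequence $\{\mathcal{U}_n:n\in\omega\}$, obtaining finite sets $\mathcal{V}_n^\alpha\subseteq\mathcal{U}_n$ such that $\{St(\bigcup\mathcal{V}_n^\alpha,\mathcal{U}_n):n\in\omega\}$ is a $\gamma$-cover of $Y_\alpha$. Then set $f_\alpha(n)=\min\{i\in\omega:\mathcal{V}_n^\alpha\subseteq\{U_n^j:j\leq i\}\}$. Since $\kappa<\mathfrak{b}$, the family $\{f_\alpha:\alpha<\kappa\}$ is bounded, so there is $g\in\omega^\omega$ with $f_\alpha\leq^*g$ for every $\alpha<\kappa$. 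Put $\mathcal{W}_n=\{U_n^i:i\leq g(n)\}$, a finite subset of $\mathcal{U}_n$.

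It remains to check that $\{St(\bigcup\mathcal{W}_n,\mathcal{U}_n):n\in\omega\}$ is a $\gamma$-cover of $X$. Given $x\in X$, choose $\alpha<\kappa$ with $x\in Y_\alpha$; there is $n_0$ with $x\in St(\bigcup\mathcal{V}_n^\alpha,\mathcal{U}_n)$ for all $n\geq n_0$ (here the $\gamma$-cover, not merely a cover, is used), and there is $n_1$ with $f_\alpha(n)\leq g(n)$ for all $n\geq n_1$ (here boundedness, not merely non-domination, is used). For $n\geq\max\{n_0,n_1\}$ we have $\mathcal{V}_n^\alpha\subseteq\{U_n^j:j\leq f_\alpha(n)\}\subseteq\mathcal{W}_n$, hence $\bigcup\mathcal{V}_n^\alpha\subseteq\bigcup\mathcal{W}_n$ and thus $x\in St(\bigcup\mathcal{V}_n^\alpha,\mathcal{U}_n)\subseteq St(\bigcup\mathcal{W}_n,\mathcal{U}_n)$. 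So $X$ is star-Hurewicz, and since $X$ is regular and Lindel\"of it is paracompact, whence (as for paracompact Hausdorff spaces the three Hurewicz-type properties coincide) $X$ is Hurewicz.

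There is no real obstacle here: the only point requiring a line of care is that the two ``all but finitely many $n$'' conditions — one coming from the $\gamma$-cover of $Y_\alpha$, the other from $f_\alpha\leq^*g$ — must be combined into a single such condition witnessing that $x$ lies in all but finitely many of the $St(\bigcup\mathcal{W}_n,\mathcal{U}_n)$, which works simply because the intersection of two cofinite subsets of $\omega$ is cofinite. This bookkeeping, together with the switch from $\mathfrak{d}$ to $\mathfrak{b}$, is the entire conceptual content of the proposition.
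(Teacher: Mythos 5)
Your proof is correct and follows essentially the same route as the paper's: apply star-Hurewiczness of each $Y_\alpha$ to get $\gamma$-covers, encode the finite selections by functions $f_\alpha$, use $\kappa<\mathfrak{b}$ to bound them by a single $g$, and combine the two cofinite conditions to conclude that $X$ is star-Hurewicz, hence Hurewicz by paracompactness of regular Lindel\"of spaces. No gaps.
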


\begin{proof}
Let $\kappa$ be a cardinal smaller than $\mathfrak{b}$ and put $X=\bigcup_{\alpha<\kappa}Y_\alpha$ with each $Y_\alpha$ being a star-Hurewicz space. Let $\{ \mathcal{U}_n : n \in \omega \}$ be a sequence of open covers of $X$. Since $X$ is Lindel\"of, we can assume that for each $n \in \omega$, $\mathcal{U}_n$ is countable and put $\mathcal{U}_n = \{U_n^i:i\in \omega\}$. For each $\alpha<\kappa$, there exists a finite subset $\mathcal{V}_n^\alpha$ of $\mathcal{U}_n$ such that $\{St(\bigcup\mathcal{V}_n^\alpha, \mathcal{U}_n): n\in\omega\}$ is a $\gamma$-cover of $Y_\alpha$. Define, for each $\alpha<\kappa$, a function $f_\alpha$ as follows: for each $n\in\omega$, let $f_\alpha(n)=min\{i\in\omega: \mathcal{V}_n^\alpha\subseteq\{U_n^j:j\leq i\}\}$. Since the collection $\{f_\alpha: \alpha < \kappa \}$ has size less than $\mathfrak{b}$, there exists $g \in \omega^\omega$ such that for every $\alpha < \kappa$, $f_\alpha \leq ^* g$. For each $n \in \omega$, let $\mathcal{W}_n =\{U_n^i: i \leq g(n) \}$.\\
\emph{Claim:} $\{St(\bigcup\mathcal{W}_n, \mathcal{U}_n): n\in\omega\}$ is a $\gamma$-cover of $X$.\\
Let $x \in X$. Then, there exists $\alpha < \kappa$ such that $x\in Y_\alpha$. Hence, there is $n_0\in\omega$ so that for every $n\geq n_0$, $x\in St(\bigcup\mathcal{V}_n^\alpha, \mathcal{U}_n)$. Since $f_\alpha \leq ^* g$, there is $n_1\in\omega$ such that for every $n\geq n_1$, $f_\alpha(n)\leq g(n)$. Put $m= max\{n_0, n_1\}$. Hence, for each $k\geq m$, $x\in St(\bigcup\mathcal{W}_k, \mathcal{U}_k)$. Indeed, let $k\geq m$. Then $x\in St(\bigcup\mathcal{V}_k^\alpha, \mathcal{U}_k) \subseteq St(\bigcup_{j\leq f_\alpha(k)}U_k^j, \mathcal{U}_k)\subseteq St(\bigcup_{j\leq g(k)}U_k^j, \mathcal{U}_k)= St(\bigcup\mathcal{W}_k, \mathcal{U}_k)$. Therefore, the collection $\{St(\bigcup\mathcal{W}_n, \mathcal{U}_n):n\in\omega\}$ is a $\gamma$-cover of $X$. Thus, $X$ is star-Hurewicz. Finally, since $X$ is Lindel\"{o}f, $X$ is a paracompact space and we conclude that $X$ is Hurewicz.
\end{proof}

In this article, by large cover we mean the following: 

\begin{definition}
A cover $\mathcal{U} =\{U_\alpha: \alpha<\kappa\}$ of a space $X$ it's called {\bf large} if for every $\alpha < \kappa$, $\{U_\beta: \alpha\leq\beta<\kappa\}$ is a cover of $X$. We denote the class of large covers of $X$ by $\mathcal{L}(X)$.
\end{definition}

Observe that when we consider countable covers, the previous definition and the one given in \cite{MS1} coincide.\\

For the following lemma we recall some classical notation of (star) selection principles introduced by M. Scheepers (Ko\v{c}inac) in \cite{MS1} (\cite{K}). Let $\mathcal{A}$ and $\mathcal{B}$ be collections of families of sets.\\

$S_{fin}(\mathcal{A},\mathcal{B})$: For each sequence $\{A_n:n\in\omega\}$ of elements of $\mathcal{A}$ there is a sequence $\{B_n:n\in\omega\}$ such that for each $n\in\omega$, $B_n\in [A_n]^{<\omega}$ and $\bigcup\{B_n:n\in\omega\}$ is an element of $\mathcal{B}$.\\

$U_{fin}(\mathcal{A},\mathcal{B})$: For each sequence $\{A_n:n\in\omega\}$ of elements of $\mathcal{A}$ there is a sequence $\{B_n:n\in\omega\}$ such that for each $n\in\omega$, $B_n\in [A_n]^{<\omega}$ and $\{\bigcup B_n:n\in\omega\}$ is an element of $\mathcal{B}$.\\

$S^*_{fin}(\mathcal{A},\mathcal{B})$: For each sequence $\{A_n:n\in\omega\}$ of elements of $\mathcal{A}$ there exists a sequence $\{B_n:n\in\omega\}$ such that for each $n\in\omega$, $B_n \in [A_n]^{< \omega}$ and $\{St(\bigcup B_n,A_n):n\in\omega\}$ is an element of $\mathcal{B}$.\\

The following Lemma will be useful to clarify some steps in the proofs of Propositions \ref{(L,(<b,SM))} and Lemma \ref{AlguitoE**omega}.

\begin{lemma}[Folklore]
\label{largecoverss}
For any space $X$:
\begin{enumerate}
\item\label{largecoverss1} $S_{fin}(\mathcal{O}, \mathcal{O}) \leftrightarrow U_{fin}(\mathcal{O}, \mathcal{L})$.
\item\label{largecoverss2} $S^*_{fin}(\mathcal{O}, \mathcal{O}) \leftrightarrow S^*_{fin}(\mathcal{O}, \mathcal{L})$.
\end{enumerate}
\end{lemma}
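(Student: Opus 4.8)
### Proof proposal

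The plan is to prove both biconditionals by essentially the same bookkeeping trick: an open cover of $X$ is "almost" a large cover, because we can take finite unions (or, in the starred case, stars of finite unions) to turn the $n$-th selection into something that covers $X$ many times. Since every large cover is a fortiori an open cover, the implications $U_{fin}(\mathcal{O},\mathcal{L})\to S_{fin}(\mathcal{O},\mathcal{O})$ and $S^*_{fin}(\mathcal{O},\mathcal{L})\to S^*_{fin}(\mathcal{O},\mathcal{O})$ are immediate from $\mathcal{L}(X)\subseteq\mathcal{O}$, so the content is in the forward directions.

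For \eqref{largecoverss1}, assume $S_{fin}(\mathcal{O},\mathcal{O})$ and let $\{\mathcal{U}_n:n\in\omega\}$ be a sequence of open covers. First I would replace this sequence by a "diagonal refinement": partition $\omega$ into infinitely many infinite pieces $\{I_k:k\in\omega\}$, and for each $k$ run $S_{fin}(\mathcal{O},\mathcal{O})$ on the subsequence $\{\mathcal{U}_n:n\in I_k\}$ to get finite sets $B_n\in[\mathcal{U}_n]^{<\omega}$ ($n\in I_k$) with $\{\bigcup B_n:n\in I_k\}$ an open cover of $X$; then $\{\bigcup B_n:n\in\omega\}$ covers $X$ in a way that "$k$-fold" covers it, and setting $\mathcal{W}_\ell=\{\bigcup B_n : n\in I_\ell\cup I_{\ell+1}\cup\cdots\}$-style tails shows $\{\bigcup B_n:n\in\omega\}$, suitably re-indexed, is a large cover. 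Concretely: given $\alpha<\omega$ (an index), the tail $\{\bigcup B_n : n\ge \text{something}\}$ still meets every piece $I_k$ for all large $k$, hence still covers $X$, which is exactly largeness. So $\{\bigcup B_n:n\in\omega\}\in\mathcal{L}(X)$, giving $U_{fin}(\mathcal{O},\mathcal{L})$.

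For \eqref{largecoverss2} the argument is identical with $\bigcup B_n$ replaced by $St(\bigcup B_n,\mathcal{U}_n)$ throughout: assuming $S^*_{fin}(\mathcal{O},\mathcal{O})$, split $\omega=\bigsqcup_k I_k$, apply the hypothesis on each subsequence $\{\mathcal{U}_n:n\in I_k\}$ to obtain $B_n\in[\mathcal{U}_n]^{<\omega}$ with $\{St(\bigcup B_n,\mathcal{U}_n):n\in I_k\}\in\mathcal{O}$, and observe that any final segment of $\{St(\bigcup B_n,\mathcal{U}_n):n\in\omega\}$ still contains a full such cover (from $I_k$ with $k$ large), so the whole family is a large cover; this is $S^*_{fin}(\mathcal{O},\mathcal{L})$.

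The main obstacle — really the only subtle point — is making the re-indexing precise so that "every tail still covers" literally matches the definition of large cover, which demands that for the enumerated cover $\{W_m:m<\omega\}$ and every $m_0$, the set $\{W_m:m\ge m_0\}$ covers $X$. The partition-into-infinite-pieces device handles this cleanly because any tail $\{m:m\ge m_0\}$ still contains entirely infinitely many of the blocks $I_k$, and each block already yields a cover; one just has to fix an enumeration of $\omega$ respecting the blocks (e.g. interleaving) and check that cofinal intersections with a block are nonempty. Everything else is routine, and no separation axioms or cardinal characteristics enter — this is why it is folklore.
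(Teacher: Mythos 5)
Your proof is correct, but it organizes the bookkeeping differently from the paper. The paper applies the hypothesis once for every tail $\{\mathcal{U}_n : m\le n<\omega\}$ of the given sequence, obtaining selections $\mathcal{V}_n^m$, and then superimposes them by setting $\mathcal{W}_n=\bigcup_{m\le n}\mathcal{V}_n^m$ (still finite), so that each tail $\{\bigcup\mathcal{W}_n : n\ge m\}$, resp.\ $\{St(\bigcup\mathcal{W}_n,\mathcal{U}_n): n\ge m\}$, covers $X$ because it dominates the $m$-th selection; largeness is then immediate. You instead partition $\omega$ into infinitely many infinite blocks $I_k$ and apply the hypothesis once per block, keeping the selections disjointly supported; largeness follows because the block minima are distinct, hence unbounded, so every tail of $\omega$ contains all but finitely many blocks \emph{in their entirety}, and each block's selection is already a cover. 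Both arguments are standard folklore devices and of the same depth; yours avoids taking unions of selections (and the monotonicity remark $St(\bigcup\mathcal{V}_n^m,\mathcal{U}_n)\subseteq St(\bigcup\mathcal{W}_n,\mathcal{U}_n)$ needed in the star case), while the paper's avoids any discussion of how tails interact with a partition. Two small points of precision in your write-up: the sentence claiming the tail ``meets every piece $I_k$ for all large $k$'' is not the right condition (meeting a block does not give you its whole cover), but you state the correct fact later, namely that every tail contains infinitely many whole blocks; and no re-enumeration of $\omega$ ``respecting the blocks'' is needed at all, since the unboundedness of the block minima already makes the natural enumeration work.
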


\begin{proof}
Let $X$ be any space. Observe that $U_{fin}(\mathcal{O}, \mathcal{L})\rightarrow S_{fin}(\mathcal{O}, \mathcal{O})$  and $S^*_{fin}(\mathcal{O}, \mathcal{L})\rightarrow S^*_{fin}(\mathcal{O}, \mathcal{O})$ are immediate. Now, assume $S_{fin}(\mathcal{O}, \mathcal{O})$ ($S^*_{fin}(\mathcal{O}, \mathcal{O})$ respectively) holds. Let $\{ \mathcal{U}_n : n \in \omega \}$ be any sequence of open covers of $X$ and let $m\in\omega$. Since the collection $\{\mathcal{U}_n : m \leq n < \omega\}$ is a sequence of open covers of $X$, then for each $n\geq m$ there exists a finite subset $\mathcal{V}_ n^m $ of $\mathcal{U}_n$ such that $\bigcup\{\mathcal{V}_n^m : m\leq n < \omega\}$ ($\{St(\bigcup \mathcal{V}_n^m,\mathcal{U}_n) : m\leq n < \omega\}$, resp.) is an open cover of $X$. So, for each $n\in\omega$ we define  $\mathcal{W}_n=\bigcup_{m\leq n}\mathcal{V}_n^m$. Hence, for each $m\in\omega$ the collection $\bigcup\{\mathcal{W}_n:m\leq n <\omega\}$ ($\{St(\bigcup \mathcal{W}_n,\mathcal{U}_n) : m\leq n < \omega\}$, resp.) is an open cover of $X$. Furthermore, for each $m\in\omega$ the collection $\{\bigcup\mathcal{W}_n:m\leq n <\omega\}$ is an open cover of $X$. Thus, for each $n$, $\mathcal{W}_n$ is a finite subset of $\mathcal{U}_n$ and the collection $\{\bigcup\mathcal{W}_n : n < \omega\}$ ($\{St(\bigcup \mathcal{W}_n,\mathcal{U}_n) :  n < \omega\}$, resp.) is a large cover of $X$. Hence, $U_{fin}(\mathcal{O}, \mathcal{L})$ ($S^*_{fin}(\mathcal{O}, \mathcal{L})$, resp.) holds.
\end{proof}

\begin{proposition}
\label{(L,(<b,SM))}
If $X$ is a Lindel\"of space and $X$ is the union of less than $\mathfrak{b}$ star-Menger spaces, then $X$ is Menger.
\end{proposition}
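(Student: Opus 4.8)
The plan is to mirror the structure of the proofs of Propositions \ref{(L,(<d,SH))} and \ref{(L,(<b,SH))}, adapting the bookkeeping from $\gamma$-covers to ordinary (equivalently, large) covers. So suppose $X$ is Lindel\"of and $X=\bigcup_{\alpha<\kappa}Y_\alpha$ with $\kappa<\mathfrak{b}$ and each $Y_\alpha$ star-Menger. Given a sequence $\{\mathcal{U}_n:n\in\omega\}$ of open covers of $X$, use the Lindel\"of property to assume each $\mathcal{U}_n=\{U_n^i:i\in\omega\}$ is countable, and enumerate it with a fixed indexing so that ``initial segments'' $\{U_n^j:j\le i\}$ make sense. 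First I would invoke Lemma \ref{largecoverss}\eqref{largecoverss2}: since each $Y_\alpha$ is star-Menger, i.e. satisfies $S^*_{fin}(\mathcal{O},\mathcal{O})$, it also satisfies $S^*_{fin}(\mathcal{O},\mathcal{L})$, so for each $\alpha<\kappa$ there is a sequence $\{\mathcal{V}_n^\alpha:n\in\omega\}$ of finite subsets of $\mathcal{U}_n$ (note $\mathcal{U}_n$ restricted to $Y_\alpha$ is an open cover of $Y_\alpha$) such that $\{St(\bigcup\mathcal{V}_n^\alpha,\mathcal{U}_n):n\in\omega\}$ is a \emph{large} cover of $Y_\alpha$.

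Next, exactly as before, define $f_\alpha\in\omega^\omega$ by $f_\alpha(n)=\min\{i\in\omega:\mathcal{V}_n^\alpha\subseteq\{U_n^j:j\le i\}\}$. Since $|\{f_\alpha:\alpha<\kappa\}|=\kappa<\mathfrak{b}$, the family is bounded: there is $g\in\omega^\omega$ with $f_\alpha\le^* g$ for every $\alpha<\kappa$. Put $\mathcal{W}_n=\{U_n^i:i\le g(n)\}$, a finite subset of $\mathcal{U}_n$. The claim is that $\{St(\bigcup\mathcal{W}_n,\mathcal{U}_n):n\in\omega\}$ covers $X$. Given $x\in X$, pick $\alpha$ with $x\in Y_\alpha$ and pick $n_1$ with $f_\alpha(n)\le g(n)$ for all $n\ge n_1$. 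Here is where the large-cover property does the work that the $\gamma$-cover did in the Hurewicz proofs: because $\{St(\bigcup\mathcal{V}_n^\alpha,\mathcal{U}_n):n\in\omega\}$ is a large cover of $Y_\alpha$, its tail $\{St(\bigcup\mathcal{V}_n^\alpha,\mathcal{U}_n):n\ge n_1\}$ still covers $Y_\alpha$, so there is some $n\ge n_1$ with $x\in St(\bigcup\mathcal{V}_n^\alpha,\mathcal{U}_n)$. For that $n$ we have $f_\alpha(n)\le g(n)$, hence $\bigcup\mathcal{V}_n^\alpha\subseteq\bigcup\mathcal{W}_n$ and therefore $St(\bigcup\mathcal{V}_n^\alpha,\mathcal{U}_n)\subseteq St(\bigcup\mathcal{W}_n,\mathcal{U}_n)$, giving $x\in St(\bigcup\mathcal{W}_n,\mathcal{U}_n)$. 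So $X$ is star-Menger, and since a Lindel\"of space is paracompact, $X$ is Menger.

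The one genuinely delicate point — and the reason Lemma \ref{largecoverss} is cited — is precisely this: a bound $f_\alpha\le^* g$ only guarantees agreement on a cofinal set of coordinates, so a plain open cover of $Y_\alpha$ indexed by $\omega$ need not be ``hit'' at any such coordinate, whereas a large cover is hit at every tail and hence at some coordinate where $f_\alpha(n)\le g(n)$. (In the Hurewicz propositions the $\gamma$-cover condition was even stronger, guaranteeing all but finitely many coordinates, which is why $\le^*$ there yielded a $\gamma$-cover of $X$; here the weaker conclusion ``open cover of $X$'' is exactly what matches the weaker ``large cover'' hypothesis available from star-Menger-ness.) Everything else is routine bookkeeping identical to the two preceding proofs.
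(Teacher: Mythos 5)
Your proof is correct and follows essentially the same route as the paper's: the paper also invokes Lemma \ref{largecoverss} to get, for each $Y_\alpha$, selections whose stars form a large cover (equivalently, whose tails all cover $Y_\alpha$), bounds the functions $f_\alpha$ by some $g$ using $\kappa<\mathfrak{b}$, and uses a coordinate $n$ beyond the bound at which the large cover hits $x$, concluding star-Menger and then Menger via paracompactness of Lindel\"of spaces. Your closing remark about why a large cover (rather than a plain cover) is needed is exactly the point the paper's appeal to Lemma \ref{largecoverss} is making.
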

\begin{proof}
Let $\kappa$ be a cardinal smaller than $\mathfrak{b}$ and put $X=\bigcup_{\alpha<\kappa}Y_\alpha$ with each $Y_\alpha$ being a star-Menger space. Let $\{ \mathcal{U}_n : n \in \omega \}$ be a sequence of open covers of $X$. Since $X$ is Lindel\"of, we can assume that for each $n \in \omega$, $\mathcal{U}_n$ is countable and put $\mathcal{U}_n = \{U_n^i:i\in \omega\}$. Since or each $\alpha<\kappa$, $Y_\alpha$ is star-Menger, by Lemma \ref{largecoverss}, for each $\alpha<\kappa$ there is $\mathcal{V}_n^\alpha$ finite subset of $\mathcal{U}_n$ such that for every $m\in\omega$,  $\{St(\bigcup\mathcal{V}_n^\alpha, \mathcal{U}_n): m \leq n < \omega\}$ is an open cover of $Y_\alpha$. Define, for each $\alpha<\kappa$, a function $f_\alpha$ as follows: for each $n\in\omega$, let $f_\alpha(n)=min\{i\in\omega: \mathcal{V}_n^\alpha\subseteq\{U_n^j:j\leq i\}\}$. Since the collection $\{f_\alpha: \alpha < \kappa \}$ has size less than $\mathfrak{b}$, there exists $g \in \omega^\omega$ such that for every $\alpha < \kappa$, $f_\alpha \leq ^* g$. For each $n \in \omega$, let $\mathcal{W}_n =\{U_n^i: i \leq g(n) \}$. \\
\emph{Claim:} $\{St(\bigcup\mathcal{W}_n, \mathcal{U}_n): n\in\omega\}$ is an open cover of $X$.\\
Let $x \in X$ and fix $\alpha < \kappa$ such that $x\in Y_\alpha$. Hence, there is $m\in\omega$ so that for every $n\geq m$, $f_\alpha(n)\leq g(n)$. Let $n \geq m$ such that $x\in St(\bigcup\mathcal{V}_n^\alpha, \mathcal{U}_n)$. Observe that $St(\bigcup\mathcal{V}_n^\alpha, \mathcal{U}_n) \subseteq St(\bigcup_{j\leq f_\alpha(n)}U_n^j, \mathcal{U}_n)\subseteq St(\bigcup_{j\leq g(n)}U_n^j, \mathcal{U}_n)= St(\bigcup\mathcal{W}_n, \mathcal{U}_n)$. Therefore, the collection $\{St(\bigcup\mathcal{W}_n, \mathcal{U}_n):n\in\omega\}$ is an open cover of $X$. Thus, $X$ is star-Menger. Finally, since $X$ is Lindel\"{o}f, $X$ is a paracompact space and this allow us to conclude that $X$ is Menger.
\end{proof}

Following \cite{BH} (see also \cite{RZ}), we recall some modifications of the Menger and Hurewicz properties, called $E^*_\omega$ and $E^{**}_\omega$ properties, respectively. We say that a space $X$ has the property $E^*_\omega$ ($E^{**}_\omega$), if for every sequence $\{\mathcal{U}_n:n\in\omega\}$ of countable open covers of $X$, there exists a sequence $\{\mathcal{V}_n:n\in\omega\}$ such that for each $n\in\omega$, $\mathcal{V}_n$ is a finite subset of $\mathcal{U}_n$ and $\{\bigcup\mathcal{V}_n:n\in\omega\}$ is an open cover ($\gamma$-cover) of $X$. If we recall the definition of countably compact space (every countable cover has a finite subcover), properties $E^*_\omega$ and $E^{**}_\omega$ could be called countably Menger and countably Hurewicz, respectively.  As pointed out in \cite{RZ}, in the class of Lindel\"{o}f spaces, the properties Menger and $E^*_\omega$ are the same. However, this fact is not true in general. The space $\omega_1$ (with the order topology) has the property $E^*_\omega$ and it is not a Menger space.

\begin{lemma}
\label{AlgoE*omega}
If $X$ can be written as a union of less than $\mathfrak{d}$ many Hurewicz spaces, then $X$ is $E^*_\omega$.
\end{lemma}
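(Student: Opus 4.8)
The plan is to mimic the structure of Proposition~\ref{(L,(<d,SH))}, but without invoking Lindel\"ofness of $X$ to pass to countable covers; instead, the hypothesis of $E^*_\omega$ already restricts attention to countable open covers. Write $X=\bigcup_{\alpha<\kappa}Y_\alpha$ with $\kappa<\mathfrak{d}$ and each $Y_\alpha$ Hurewicz. Given a sequence $\{\mathcal{U}_n:n\in\omega\}$ of \emph{countable} open covers of $X$, enumerate each $\mathcal{U}_n=\{U_n^i:i\in\omega\}$. Each $Y_\alpha$, being Hurewicz, is in particular Menger — but more importantly I want a $\gamma$-cover witness, so for each $\alpha$ apply the Hurewicz property to $\{\mathcal{U}_n\}$ (restricted to $Y_\alpha$) to get finite $\mathcal{V}_n^\alpha\subseteq\mathcal{U}_n$ with $\{\bigcup\mathcal{V}_n^\alpha:n\in\omega\}$ a $\gamma$-cover of $Y_\alpha$.

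Next, define $f_\alpha(n)=\min\{i:\mathcal{V}_n^\alpha\subseteq\{U_n^j:j\le i\}\}$, exactly as before. Since $\{f_\alpha:\alpha<\kappa\}$ has size $<\mathfrak{d}$, it is not dominating, so choose $g\in\omega^\omega$ with $g\nleq^* f_\alpha$ for every $\alpha<\kappa$ (equivalently, for each $\alpha$ there are infinitely many $n$ with $g(n)>f_\alpha(n)$). Set $\mathcal{W}_n=\{U_n^i:i\le g(n)\}$, a finite subset of $\mathcal{U}_n$. I claim $\{\bigcup\mathcal{W}_n:n\in\omega\}$ covers $X$: given $x\in X$, pick $\alpha$ with $x\in Y_\alpha$; since $\{\bigcup\mathcal{V}_n^\alpha:n\}$ is a $\gamma$-cover of $Y_\alpha$, there is $n_0$ with $x\in\bigcup\mathcal{V}_n^\alpha$ for all $n\ge n_0$; pick $n>n_0$ with $g(n)>f_\alpha(n)$, so $x\in\bigcup\mathcal{V}_n^\alpha\subseteq\bigcup_{j\le f_\alpha(n)}U_n^j\subseteq\bigcup_{j\le g(n)}U_n^j=\bigcup\mathcal{W}_n$. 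Hence $X$ is $E^*_\omega$.

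There is really no hard part here — it is a direct transcription of the $(L,(<\mathfrak{d},SH))$ argument with "$St(\bigcup\mathcal{V}_n^\alpha,\mathcal{U}_n)$" replaced by plain "$\bigcup\mathcal{V}_n^\alpha$" and with the Lindel\"of reduction replaced by the built-in countability of covers in the definition of $E^*_\omega$. The only point worth a sentence of care is why we may assume the covers are countable: that is exactly the content of the definition of $E^*_\omega$, so no separate justification is needed. One could even phrase the whole thing as: a Hurewicz space is "countably Hurewicz" ($E^{**}_\omega$), and the argument shows $(<\mathfrak{d}\text{ many }E^{**}_\omega)\rightarrow E^*_\omega$; but since Hurewicz $\Rightarrow E^{**}_\omega$ trivially, stating it for Hurewicz spaces loses nothing and matches the paper's phrasing. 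I expect the proof to be three or four lines longer than Proposition~\ref{(L,(<d,SH))} minus the final paracompactness sentence.
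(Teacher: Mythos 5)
Your proposal is correct and is essentially the same argument as the paper's proof of Lemma \ref{AlgoE*omega}: apply the Hurewicz property of each piece to get finite selections forming $\gamma$-covers, encode them as functions $f_\alpha$, use $\kappa<\mathfrak{d}$ to find $g$ not dominated by the $f_\alpha$'s (i.e.\ $g\nleq^* f_\alpha$ for each $\alpha$), and take the initial segments $\{U_n^i:i\le g(n)\}$. No differences worth noting.
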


\begin{proof}
Assume $X = \bigcup_{\alpha < \kappa}H_\alpha$ so that $\kappa < \mathfrak{d}$ and each $H_\alpha$ is Hurewicz. Let $\{\mathcal{U}_n:n \in \omega\}$ be a sequence of countable open covers of $X$. For each $n \in \omega$ let $\mathcal{U}_n = \{U^i_n:i \in \omega\}$. For each $\alpha < \kappa$ and $ n \in \omega$, there exists $\mathcal{F}^\alpha_n \in [\mathcal{U}_n]^{<\omega}$ so that $\{\bigcup \mathcal{F}^\alpha_n : n \in \omega\}$ is a $\gamma$-cover of $H_\alpha$. For each $\alpha < \kappa$ and $ n \in \omega$, let $f_\alpha(n) = min\{m\in \omega: \mathcal{F}^\alpha_n \subseteq \{U^i_n: i \leq m\}\}$. Given that $\{f_\alpha : \alpha < \kappa\}$ has size less than $\mathfrak{d}$, there exists $g \in \omega ^ \omega$ so that for each $\alpha < \kappa$, $g \not \leq ^* f_\alpha$. For each $n \in \omega$, let $G_n = \{U^i_n: i \leq g(n)\}$. Let us check that $\{\bigcup G_n:n \in \omega\}$ is an open cover of $X$. Let $x \in X$, then there is $\alpha < \kappa$ so that $x \in H_\alpha$. Thus, there is $n_0 \in \omega$ so that for each $n \geq n_0$, $x \in \bigcup \mathcal{F}^\alpha_n$. Pick $m \geq n_0$ with $f_\alpha(m) < g(m)$. Hence, $x \in \bigcup \mathcal{F}^\alpha_m \subseteq \bigcup G_m$. Therefore, $X$ is $E^*_\omega$.
\end{proof}

Analogous to Lemma \ref{AlgoE*omega}, the following also holds:

\begin{lemma}
\label{AlguitoE**omega}
\begin{enumerate}
\item \label{AlguitoE**omega_b_Menger} If $X$ can be written as a union of less than $\mathfrak{b}$ many Menger spaces, then $X$ is $E^{*}_\omega$.
\item \label{AlguitoE**omega_b_Hurewicz} If $X$ can be written as a union of less than $\mathfrak{b}$ many Hurewicz spaces, then $X$ is $E^{**}_\omega$.
\end{enumerate}
\end{lemma}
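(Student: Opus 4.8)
The plan is to mirror the structure of the proof of Lemma \ref{AlgoE*omega}, but use a \emph{bounded} family of functions instead of a \emph{non-dominated} one, so that $f_\alpha \le^* g$ holds for \emph{all} $\alpha < \kappa$ simultaneously. Write $X = \bigcup_{\alpha<\kappa} Y_\alpha$ with $\kappa < \mathfrak b$, and fix a sequence $\{\mathcal U_n : n\in\omega\}$ of countable open covers of $X$, enumerating $\mathcal U_n = \{U_n^i : i\in\omega\}$.

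For part \ref{AlguitoE**omega_b_Hurewicz}, each $Y_\alpha$ is Hurewicz, so for each $\alpha$ there are finite sets $\mathcal F_n^\alpha \in [\mathcal U_n]^{<\omega}$ with $\{\bigcup\mathcal F_n^\alpha : n\in\omega\}$ a $\gamma$-cover of $Y_\alpha$. Define $f_\alpha(n) = \min\{m : \mathcal F_n^\alpha \subseteq \{U_n^i : i\le m\}\}$. Since $|\{f_\alpha : \alpha<\kappa\}| < \mathfrak b$, this family is bounded: pick $g\in\omega^\omega$ with $f_\alpha \le^* g$ for every $\alpha<\kappa$, and set $G_n = \{U_n^i : i\le g(n)\}$. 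To see $\{\bigcup G_n : n\in\omega\}$ is a $\gamma$-cover of $X$: given $x\in X$, fix $\alpha$ with $x\in Y_\alpha$; there is $n_0$ with $x\in\bigcup\mathcal F_n^\alpha$ for all $n\ge n_0$, and $n_1$ with $f_\alpha(n)\le g(n)$ for all $n\ge n_1$; then for every $n\ge\max\{n_0,n_1\}$ we get $x\in\bigcup\mathcal F_n^\alpha\subseteq\bigcup G_n$. Hence $X$ is $E^{**}_\omega$.

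For part \ref{AlguitoE**omega_b_Menger}, each $Y_\alpha$ is only Menger, so a priori $\{\bigcup\mathcal F_n^\alpha : n\in\omega\}$ is merely an open cover of $Y_\alpha$, not a $\gamma$-cover, and the argument above breaks because there need be no single tail of $n$'s witnessing membership. The fix is to invoke Lemma \ref{largecoverss}\ref{largecoverss1}: since $Y_\alpha$ satisfies $S_{fin}(\mathcal O,\mathcal O)$ it satisfies $U_{fin}(\mathcal O,\mathcal L)$, equivalently (applied to the tail sequences) for each $\alpha$ and each $m\in\omega$ the family $\{\bigcup\mathcal F_n^\alpha : m\le n<\omega\}$ can be arranged to be an open cover of $Y_\alpha$ — that is, we may take the $\mathcal F_n^\alpha$ so that $\{\bigcup\mathcal F_n^\alpha : n\in\omega\}$ is a \emph{large} cover of $Y_\alpha$. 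With the same $f_\alpha$, $g$, $G_n$ as before, given $x\in Y_\alpha$ and any $m$ with $f_\alpha(n)\le g(n)$ for all $n\ge m$, largeness gives some $n\ge m$ with $x\in\bigcup\mathcal F_n^\alpha\subseteq\bigcup G_n$, so $\{\bigcup G_n : n\in\omega\}$ covers $X$ and $X$ is $E^*_\omega$.

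The only real subtlety is this appeal to the large-cover reformulation in part \ref{AlguitoE**omega_b_Menger}; everything else is the routine bounding-number argument, word for word parallel to Lemma \ref{AlgoE*omega} with $\le^*$ in place of $\nleq^*$. I would present part \ref{AlguitoE**omega_b_Hurewicz} in full and then remark that part \ref{AlguitoE**omega_b_Menger} follows identically once one replaces ``$\gamma$-cover'' by ``large cover'' throughout, using Lemma \ref{largecoverss}, exactly as in the proof of Proposition \ref{(L,(<b,SM))}.
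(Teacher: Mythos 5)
Your proposal is correct and follows essentially the same route as the paper: the Hurewicz part is the standard $\mathfrak b$-bounding argument with $\gamma$-covers, and the Menger part uses exactly the paper's device of upgrading the Menger selections to a large cover via Lemma \ref{largecoverss}(\ref{largecoverss1}) so that largeness replaces the missing tail. No gaps.
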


\begin{proof}
$(1)$ Assume $X = \bigcup_{\alpha < \kappa}M_\alpha$ so that $\kappa < \mathfrak{b}$ and each $M_\alpha$ is Menger. Let $\{\mathcal{U}_n:n \in \omega\}$ be a sequence of countable open covers of $X$. For each $n \in \omega$ let $\mathcal{U}_n = \{U^i_n:i \in \omega\}$. By Lemma \ref{largecoverss}, for each $\alpha < \kappa$ and $ n \in \omega$, there exists $\mathcal{F}^\alpha_n \in [\mathcal{U}_n]^{<\omega}$ so that $\{\bigcup \mathcal{F}^\alpha_n : n \in \omega\}$ is a large cover of $M_\alpha$. For each $\alpha < \kappa$ and $ n \in \omega$, let $f_\alpha(n) = min\{m\in \omega: \mathcal{F}^\alpha_n \subseteq \{U^i_n: i \leq m\}\}$. Given that $\{f_\alpha : \alpha < \kappa\}$ has size less than $\mathfrak{b}$, there exists $g \in \omega ^ \omega$ so that for each $\alpha < \kappa$, $f_\alpha\leq ^* g$. For each $n \in \omega$, let $G_n = \{U^i_n: i \leq g(n)\}$. Let us check that $\{\bigcup G_n:n \in \omega\}$ is an open cover of $X$. Let $x \in X$, then there is $\alpha < \kappa$ so that $x \in M_\alpha$. Thus, there is $n_0 \in \omega$ so that for each $n \geq n_0$, $f_\alpha(n)\leq g(n)$. Since $\{\bigcup \mathcal{F}^\alpha_n : n \in \omega\}$ is a large cover of $M_\alpha$, there is $n\geq n_0$ such that $x \in \bigcup \mathcal{F}^\alpha_n\subseteq \bigcup_{i\leq f_\alpha(n)} U^i_n\subseteq \bigcup_{i\leq g(n)} U^i_n=\bigcup G_n$. Therefore, $X$ is $E^*_\omega$.

 $(2)$ Assume $X = \bigcup_{\alpha < \kappa}H_\alpha$ so that $\kappa < \mathfrak{b}$ and each $H_\alpha$ is Hurewicz. Let $\{\mathcal{U}_n:n \in \omega\}$ be a sequence of countable open covers of $X$. For each $n \in \omega$ let $\mathcal{U}_n = \{U^i_n:i \in \omega\}$. For each $\alpha < \kappa$ and $ n \in \omega$, there exists $\mathcal{F}^\alpha_n \in [\mathcal{U}_n]^{<\omega}$ so that $\{\bigcup \mathcal{F}^\alpha_n : n \in \omega\}$ is $\gamma$-cover of $H_\alpha$. For each $\alpha < \kappa$ and $ n \in \omega$, let $f_\alpha(n) = min\{m\in \omega: \mathcal{F}^\alpha_n \subseteq \{U^i_n: i \leq m\}\}$. Given that $\{f_\alpha : \alpha < \kappa\}$ has size less than $\mathfrak{b}$, there exists $g \in \omega ^ \omega$ so that for each $\alpha < \kappa$, $f_\alpha\leq ^* g$. For each $n \in \omega$, let $G_n = \{U^i_n: i \leq g(n)\}$. Let us check that $\{\bigcup G_n:n \in \omega\}$ is a $\gamma$-cover of $X$. Let $x \in X$, then there is $\alpha < \kappa$ so that $x \in H_\alpha$. Thus, there is $n_0 \in \omega$ so that for each $n \geq n_0$, $f_\alpha(n)\leq g(n)$. Since $\{\bigcup \mathcal{F}^\alpha_n : n \in \omega\}$ is a $\gamma$-cover of $H_\alpha$, we can fix $n_1\geq n_0$ such that for each $n \geq n_1$, $x \in \bigcup \mathcal{F}^\alpha_n\subseteq \bigcup_{i\leq f_\alpha(n)} U^i_n\subseteq \bigcup_{i\leq g(n)} U^i_n=\bigcup G_n$. Therefore, $X$ is $E^{**}_\omega$.
\end{proof}

\begin{theorem}
\label{starLindelofUnions}
For any space $X$ the following holds:
\[
\begin{tikzcd}[row sep=2em, column sep=1em]
&&  \scalebox{0.7}{$\big(SSL,(<\mathfrak{d},H)\big)$} \arrow[drrr, bend left ]  \arrow[ddd] \arrow[dl,<-]&& & \\
&\scalebox{0.7}{$\big(SSL,(<\mathfrak{b},H)\big)$}\arrow[rr,crossing over] \arrow[ddd] \arrow[dl]&&\scalebox{0.7}{$SSH$} \arrow[rr] \arrow[ddd,crossing over] &&\scalebox{0.7}{$SSM$} \arrow[ddd]\\
\scalebox{0.7}{$\big(SSL,(<\mathfrak{b},M)\big)$} \arrow[urrrrr, crossing over, bend right] \arrow[ddd]& && &&\\
&& \scalebox{0.7}{$ \big(SL,(<\mathfrak{d},H)\big)$} \arrow[drrr, bend left ]  \arrow[dl,<-] & &&\\
& \scalebox{0.7}{$\big(SL,(<\mathfrak{b},H)\big)$} \arrow[rr]  \arrow[dl]&&\scalebox{0.7}{$SH$} \arrow[rr]&&\scalebox{0.7}{$SM$}\\
 \scalebox{0.7}{$\big(SL,(<\mathfrak{b},M)\big)$} \arrow[urrrrr, bend right]& && &&
\end{tikzcd}
\]
\end{theorem}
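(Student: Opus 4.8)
The plan is to prove Theorem \ref{starLindelofUnions} by establishing each arrow in the diagram separately, reusing the template of the proofs of Propositions \ref{(L,(<d,SH))}--\ref{(L,(<b,SM))} but adapted to the starred setting. The diagram consists of two horizontal ``layers'' (the $SSL$ layer on top, the $SL$ layer on the bottom) that are formally parallel, plus three vertical arrows connecting the starred-union hypotheses to their non-starred counterparts, plus the three ``diagonal'' implications into $SSM$/$SM$ inside each layer. The horizontal arrows $SSH\to SSM$ and $SH\to SM$ are already recorded in Figure~1, so nothing new is needed there; the implications $\big(SSL,(<\mathfrak{b},H)\big)\to\big(SSL,(<\mathfrak{d},H)\big)$ and $\big(SL,(<\mathfrak{b},H)\big)\to\big(SL,(<\mathfrak{d},H)\big)$ are trivial since $\mathfrak{b}\le\mathfrak{d}$; and the downward arrows of the form $\big(SSL,(<\kappa,P)\big)\to\big(SL,(<\kappa,P)\big)$ follow immediately from the fact that every strongly star-Lindel\"of space is star-Lindel\"of (choose, for each point of the finite/countable set $F$ with $St(F,\mathcal U)=X$, one member of $\mathcal U$ containing it). So the substantive content is the four ``core'' implications: $\big(SSL,(<\mathfrak d,H)\big)\to SSH$, $\big(SSL,(<\mathfrak b,H)\big)\to SSH$ (hence also $\to SSM$), $\big(SSL,(<\mathfrak b,M)\big)\to SSM$, and their $SL$-analogues $\big(SL,(<\mathfrak d,H)\big)\to SH$, $\big(SL,(<\mathfrak b,H)\big)\to SH$, $\big(SL,(<\mathfrak b,M)\big)\to SM$.

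For the $SSL$ layer, I would argue as follows for $\big(SSL,(<\mathfrak{d},H)\big)\to SSH$. Write $X=\bigcup_{\alpha<\kappa}H_\alpha$ with $\kappa<\mathfrak d$ and each $H_\alpha$ Hurewicz, and let $\{\mathcal U_n:n\in\omega\}$ be a sequence of open covers of $X$. Because $X$ is strongly star-Lindel\"of, for each $n$ there is a countable set $C_n=\{x_n^i:i\in\omega\}\subseteq X$ with $St(C_n,\mathcal U_n)=X$; then $\{St(x_n^i,\mathcal U_n):i\in\omega\}$ is a countable open cover of $X$, hence of each $H_\alpha$. Since $H_\alpha$ is Hurewicz, applying it to the sequence $\big(\{St(x_n^i,\mathcal U_n):i\in\omega\}\big)_n$ yields finite sets $F_n^\alpha\subseteq C_n$ such that $\{\bigcup_{x\in F_n^\alpha}St(x,\mathcal U_n):n\in\omega\}$ is a $\gamma$-cover of $H_\alpha$, i.e.\ $\{St(F_n^\alpha,\mathcal U_n):n\in\omega\}$ $\gamma$-covers $H_\alpha$. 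Define $f_\alpha(n)=\min\{m:F_n^\alpha\subseteq\{x_n^i:i\le m\}\}$; since $\kappa<\mathfrak d$ there is $g\in\omega^\omega$ with $g\not\le^*f_\alpha$ for every $\alpha$. Put $F_n=\{x_n^i:i\le g(n)\}$, a finite subset of $X$. Given $x\in X$, pick $\alpha$ with $x\in H_\alpha$ and $n_0$ with $x\in St(F_n^\alpha,\mathcal U_n)$ for all $n\ge n_0$; choosing $n\ge n_0$ with $g(n)>f_\alpha(n)$ gives $F_n^\alpha\subseteq F_n$, hence $x\in St(F_n^\alpha,\mathcal U_n)\subseteq St(F_n,\mathcal U_n)$, so $\{St(F_n,\mathcal U_n):n\in\omega\}$ covers $X$ and $X$ is $SSM$. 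Wait --- the target is $SSH$, not merely $SSM$: but in fact with $\kappa<\mathfrak d$ the argument only gives $SSM$, and $SSH$ is the target only for the $\mathfrak b$-version. Re-reading the diagram: the arrows out of $\big(SSL,(<\mathfrak d,H)\big)$ go to $SSM$ (the long diagonal) and down to $\big(SL,(<\mathfrak d,H)\big)$, while $SSH$ is reached from $\big(SSL,(<\mathfrak b,H)\big)$; so for the $\mathfrak b$-version I replace ``$g\not\le^*f_\alpha$'' by ``$f_\alpha\le^*g$'' and, using that $\{St(F_n^\alpha,\mathcal U_n)\}$ is a $\gamma$-cover, conclude that $x\in St(F_n,\mathcal U_n)$ for all sufficiently large $n$, which is exactly $SSH$. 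For $\big(SSL,(<\mathfrak b,M)\big)\to SSM$ I instead replace ``$H_\alpha$ Hurewicz, $\gamma$-cover'' by ``$M_\alpha$ Menger, large cover'' and use the large-cover trick of Lemma \ref{largecoverss} (together with the strongly-star version $S^*_{fin}(\mathcal O,\mathcal O)\leftrightarrow S^*_{fin}(\mathcal O,\mathcal L)$, applied with singleton stars) to arrange that $\{St(F_n^\alpha,\mathcal U_n):n\ge m\}$ covers $M_\alpha$ for every $m$; then $f_\alpha\le^*g$ suffices to land inside $St(F_n,\mathcal U_n)$ for some large $n$, giving $SSM$.

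The $SL$ layer is handled by the strictly parallel argument with ``finite/countable subset $F$ of $X$'' replaced throughout by ``countable subset $\mathcal W$ of $\mathcal U_n$'' and $St(x,\mathcal U_n)$ replaced by $St(U,\mathcal U_n)$ for $U\in\mathcal U_n$: using star-Lindel\"ofness, enumerate a countable $\mathcal W_n=\{U_n^i:i\in\omega\}\subseteq\mathcal U_n$ with $St(\bigcup\mathcal W_n,\mathcal U_n)=X$, note $\{St(U_n^i,\mathcal U_n):i\in\omega\}$ is a countable open cover, and run the same bounding/dominating argument to produce finite $\mathcal V_n=\{U_n^i:i\le g(n)\}$; the only change is that one tracks $St(\bigcup\mathcal V_n,\mathcal U_n)$ and uses $St(\bigcup\mathcal V_n^\alpha,\mathcal U_n)\subseteq St(\bigcup\mathcal V_n,\mathcal U_n)$ in place of the pointwise containment. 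This yields $SM$ in the $\mathfrak b$-Menger and $\mathfrak d$-Hurewicz cases and $SH$ in the $\mathfrak b$-Hurewicz case.

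The main obstacle I anticipate is bookkeeping rather than a genuine mathematical difficulty: one must be careful that the ``intermediate'' cover to which the (non-star) Hurewicz/Menger property of $Y_\alpha$ is applied is genuinely a \emph{countable open cover of $X$} (this is exactly where strong star-Lindel\"ofness, resp.\ star-Lindel\"ofness, is used, and it is what makes the functions $f_\alpha$ well defined on all of $\omega$), and that passing from a $\gamma$-cover/large cover of $H_\alpha$ by sets of the form $St(x,\mathcal U_n)$ back to a statement about $St(F_n,\mathcal U_n)$ is legitimate --- it is, because $St(F_n^\alpha,\mathcal U_n)=\bigcup_{x\in F_n^\alpha}St(x,\mathcal U_n)$. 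A secondary point of care is the Menger $\to SSM$ (and $\to SM$) cases, where a single large cover does not immediately give a selection valid ``from every $m$ on''; here one invokes Lemma \ref{largecoverss}(\ref{largecoverss2}) exactly as in the proof of Proposition \ref{(L,(<b,SM))} to upgrade a selection witnessing $S^*_{fin}(\mathcal O,\mathcal O)$ to one witnessing $S^*_{fin}(\mathcal O,\mathcal L)$ before defining the $f_\alpha$'s. Once these points are pinned down, each of the seven core implications is a line-by-line adaptation of an already-written proof, and the remaining arrows (the trivial $\mathfrak b\Rightarrow\mathfrak d$ weakenings, the $SSH\to SSM$ and $SH\to SM$ arrows from Figure~1, and the $SSL\Rightarrow SL$ vertical arrows) require only a sentence each.
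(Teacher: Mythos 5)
Your proposal is correct and follows essentially the same route as the paper: the paper merely packages your inlined $\mathfrak b$/$\mathfrak d$ bounding--dominating argument into Lemmas \ref{AlgoE*omega} and \ref{AlguitoE**omega} (the $E^*_\omega$/$E^{**}_\omega$ properties), and then Propositions \ref{(SSL,(<d,H))}--\ref{(SL,(<b,M))} apply these to the countable cover by stars obtained from (strong) star-Lindel\"ofness exactly as you do, with the remaining arrows handled trivially as you indicate. The only nit is that for the Menger pieces the large-cover upgrade is Lemma \ref{largecoverss}(\ref{largecoverss1}) (the non-star version, applied to the subspaces), not part (\ref{largecoverss2}); this does not affect the argument.
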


\noindent The proof is divided as Propositions \ref{(SSL,(<d,H))}, \ref{(SSL,(<b,H))}, \ref{(SSL,(<b,M))}, \ref{(SL,(<d,H))}, \ref{(SL,(<b,H))} and \ref{(SL,(<b,M))}.

\begin{proposition}
\label{(SSL,(<d,H))}
If $X$ is a strongly star-Lindel\"of space and $X$ is the union of less than $\mathfrak{d}$ Hurewicz spaces, then $X$ is strongly star-Menger.
\end{proposition}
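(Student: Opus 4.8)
The plan is to adapt the argument of Proposition~\ref{(L,(<d,SH))}, replacing the use of Lindel\"ofness (which there let us assume each $\mathcal{U}_n$ countable) by strong star-Lindel\"ofness. Write $X=\bigcup_{\alpha<\kappa}Y_\alpha$ with $\kappa<\mathfrak{d}$ and each $Y_\alpha$ Hurewicz, and let $\{\mathcal{U}_n:n\in\omega\}$ be a sequence of open covers of $X$. For each $n$, since $X$ is $SSL$, I would fix a countable set $D_n=\{d_n^i:i\in\omega\}\subseteq X$ with $St(D_n,\mathcal{U}_n)=X$, and then set $\mathcal{W}_n=\{St(d_n^i,\mathcal{U}_n):i\in\omega\}$. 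The key observation is that each $\mathcal{W}_n$ is a \emph{countable open cover} of $X$: every $St(d_n^i,\mathcal{U}_n)$ is open, and $\bigcup_{i\in\omega}St(d_n^i,\mathcal{U}_n)=St(D_n,\mathcal{U}_n)=X$. This is exactly the step that converts the strong star-Lindel\"of data into honest open covers to which the Hurewicz property of the pieces can be applied.

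Next I would apply the Hurewicz property of each $Y_\alpha$ to the sequence $\{\mathcal{W}_n:n\in\omega\}$, obtaining finite $\mathcal{G}_n^\alpha\subseteq\mathcal{W}_n$ such that every $x\in Y_\alpha$ lies in $\bigcup\mathcal{G}_n^\alpha$ for all but finitely many $n$. Define $f_\alpha\in\omega^\omega$ by $f_\alpha(n)=\min\{m\in\omega:\mathcal{G}_n^\alpha\subseteq\{St(d_n^i,\mathcal{U}_n):i\leq m\}\}$. Since $|\{f_\alpha:\alpha<\kappa\}|=\kappa<\mathfrak{d}$, this family is not dominating, so there is $g\in\omega^\omega$ with $g\nleq^* f_\alpha$ for every $\alpha<\kappa$. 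Finally, for each $n$ put $F_n=\{d_n^i:i\leq g(n)\}$, which is a finite subset of $X$.

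To finish, I would check that $\{St(F_n,\mathcal{U}_n):n\in\omega\}$ covers $X$. Given $x\in X$, pick $\alpha<\kappa$ with $x\in Y_\alpha$ and $n_0\in\omega$ with $x\in\bigcup\mathcal{G}_n^\alpha$ for all $n\geq n_0$; since $g\nleq^* f_\alpha$, choose $n\geq n_0$ with $g(n)>f_\alpha(n)$. Then $x\in St(d_n^i,\mathcal{U}_n)$ for some $i\leq f_\alpha(n)\leq g(n)$, so $d_n^i\in F_n$ and hence $x\in St(d_n^i,\mathcal{U}_n)\subseteq St(F_n,\mathcal{U}_n)$, using that $St(A,\mathcal{U})\subseteq St(B,\mathcal{U})$ whenever $A\subseteq B$. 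Thus $X$ is strongly star-Menger.

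I do not expect a genuine obstacle here: the proof is the same bounding/dominating bookkeeping as in Propositions~\ref{(L,(<d,SH))}--\ref{(L,(<b,SM))}. The one idea that makes it work is the passage from $\mathcal{U}_n$ to the countable open cover $\mathcal{W}_n$ of stars — this is precisely what strong star-Lindel\"ofness buys us, and it is what lets the Hurewicz hypothesis on the $<\mathfrak{d}$ pieces be used against a sequence of covers indexed by $\omega$, so that $\mathfrak{d}$ enters in the usual way.
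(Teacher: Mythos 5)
Your proof is correct and follows essentially the same route as the paper: strong star-Lindel\"ofness is used to replace each $\mathcal{U}_n$ by the countable open cover of stars $\mathcal{W}_n$, and then the $<\mathfrak{d}$-many Hurewicz pieces are combined via a non-dominating function $g$. The only difference is presentational: the paper packages your dominating-number bookkeeping into Lemma~\ref{AlgoE*omega} (small unions of Hurewicz spaces are $E^*_\omega$) and cites it, whereas you unfold that argument inline.
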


\begin{proof}
Let $\kappa$ be any cardinal smaller than $\mathfrak{d}$ and put $X=\bigcup_{\alpha<\kappa}H_\alpha$ with each $H_\alpha$ being a Hurewicz space. Let $\{ \mathcal{U}_n : n \in \omega \}$ be a sequence of open covers of $X$. Since $X$ is strongly star-Lindel\"of, for each $n \in \omega$ there exists $C_n\in [X]^{\leq\omega}$ such that $St(C_n,\mathcal{U}_n) = X$. For each $n\in\omega$, put $C_n = \{x_n^i:i\in \omega\}$. Note that $St(C_n,\mathcal{U}_n)=\bigcup_{i\in\omega}St(x_n^i,\mathcal{U}_n)$  for each $n\in\omega$. So, for each $n\in\omega$, the collection $\mathcal{W}_n = \{St(x_n^i,\mathcal{U}_n):i\in\omega\}$ is a countable open cover of $X$. By Lemma \ref{AlgoE*omega}, $X$ is $E^*_\omega$ and then we can get finite subcollections $\mathcal{F}_n$ of $\mathcal{W}_n$ so that $\{\bigcup\mathcal{F}_n:n\in\omega\}$ is a cover of $X$. Equivalently, we get finite subsets $F_n$ of $X$ such that the collection $\{St(F_n, \mathcal{U}_n):n\in\omega\}$ is an open cover of $X$. Thus, $X$ is strongly star-Menger.
\end{proof}

\begin{proposition}
\label{(SSL,(<b,H))}
If $X$ is a strongly star-Lindel\"of space and $X$ is the union of less than $\mathfrak{b}$ Hurewicz spaces, then $X$ is strongly star-Hurewicz.
\end{proposition}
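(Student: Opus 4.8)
The plan is to mimic the proof of Proposition \ref{(SSL,(<d,H))} almost verbatim, replacing the appeal to Lemma \ref{AlgoE*omega} by the appeal to the second part of Lemma \ref{AlguitoE**omega}. First I would write $X=\bigcup_{\alpha<\kappa}H_\alpha$ with $\kappa<\mathfrak{b}$ and each $H_\alpha$ Hurewicz, and fix an arbitrary sequence $\{\mathcal{U}_n:n\in\omega\}$ of open covers of $X$. Using that $X$ is strongly star-Lindel\"of, for each $n$ I pick a countable set $C_n=\{x_n^i:i\in\omega\}\in[X]^{\leq\omega}$ with $St(C_n,\mathcal{U}_n)=X$, and observe that $St(C_n,\mathcal{U}_n)=\bigcup_{i\in\omega}St(x_n^i,\mathcal{U}_n)$, so that $\mathcal{W}_n:=\{St(x_n^i,\mathcal{U}_n):i\in\omega\}$ is a countable open cover of $X$.

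Next, since $\kappa<\mathfrak{b}$ and each $H_\alpha$ is Hurewicz, Lemma \ref{AlguitoE**omega}(\ref{AlguitoE**omega_b_Hurewicz}) gives that $X$ has property $E^{**}_\omega$. Applying this to the sequence $\{\mathcal{W}_n:n\in\omega\}$ of countable open covers of $X$, I obtain for each $n$ a finite subcollection $\mathcal{F}_n\subseteq\mathcal{W}_n$ such that $\{\bigcup\mathcal{F}_n:n\in\omega\}$ is a $\gamma$-cover of $X$. Writing $\mathcal{F}_n=\{St(x_n^i,\mathcal{U}_n):i\in I_n\}$ for a finite $I_n\subseteq\omega$ and setting $F_n:=\{x_n^i:i\in I_n\}\in[X]^{<\omega}$, one has $\bigcup\mathcal{F}_n=St(F_n,\mathcal{U}_n)$.

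Finally I would check that $\{St(F_n,\mathcal{U}_n):n\in\omega\}$ witnesses strong star-Hurewiczness: given $x\in X$, since $\{\bigcup\mathcal{F}_n:n\in\omega\}$ is a $\gamma$-cover of $X$, we have $x\in\bigcup\mathcal{F}_n=St(F_n,\mathcal{U}_n)$ for all but finitely many $n$, which is exactly what is required. I do not expect any genuine obstacle: the proof is a routine adaptation, and the only point requiring a little care is the bookkeeping that translates a finite subfamily of $\mathcal{W}_n$ back into a finite subset of $X$, exactly as in Proposition \ref{(SSL,(<d,H))}; the switch from ``open cover'' to ``$\gamma$-cover'' is absorbed entirely by using part (\ref{AlguitoE**omega_b_Hurewicz}) rather than part (\ref{AlguitoE**omega_b_Menger}) of Lemma \ref{AlguitoE**omega}.
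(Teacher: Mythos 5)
Your proposal is correct and is essentially identical to the paper's own proof: both use the strongly star-Lindel\"of property to replace each $\mathcal{U}_n$ by the countable cover $\mathcal{W}_n=\{St(x_n^i,\mathcal{U}_n):i\in\omega\}$, invoke Lemma \ref{AlguitoE**omega}(\ref{AlguitoE**omega_b_Hurewicz}) to get $E^{**}_\omega$, and translate the resulting finite subfamilies back into finite subsets $F_n$ of $X$ with $\{St(F_n,\mathcal{U}_n):n\in\omega\}$ a $\gamma$-cover. No gaps.
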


\begin{proof}
Let $\kappa$ be any cardinal smaller than $\mathfrak{b}$ and put $X=\bigcup_{\alpha<\kappa}H_\alpha$ with each $H_\alpha$ being a Hurewicz space. Let $\{ \mathcal{U}_n : n \in \omega \}$ be a sequence of open covers of $X$. Since $X$ is strongly star-Lindel\"of, for each $n \in \omega$ there exists $C_n\in [X]^{\leq\omega}$ such that $St(C_n,\mathcal{U}_n) = X$. For each $n\in\omega$, put $C_n = \{x_n^i:i\in \omega\}$. Note that $St(C_n,\mathcal{U}_n)=\bigcup_{i\in\omega}St(x_n^i,\mathcal{U}_n)$  for each $n\in\omega$. So, for each $n\in\omega$, the collection $\mathcal{W}_n = \{St(x_n^i,\mathcal{U}_n):i\in\omega\}$ is a countable open cover of $X$. By Lemma \ref{AlguitoE**omega} (\ref{AlguitoE**omega_b_Hurewicz}), $X$ is $E^{**}_\omega$ and then we can get finite subcollections $\mathcal{F}_n$ of $\mathcal{W}_n$ so that $\{\bigcup\mathcal{F}_n:n\in\omega\}$ is a $\gamma$-cover of $X$. Equivalently, we get finite subsets $F_n$ of $X$ such that the collection $\{St(F_n, \mathcal{U}_n):n\in\omega\}$ is a $\gamma$-cover of $X$. Thus, $X$ is strongly star-Hurewicz.
\end{proof}

\begin{proposition}
\label{(SSL,(<b,M))}
If $X$ is a strongly star-Lindel\"of space and $X$ is the union of less than $\mathfrak{b}$ Menger spaces, then $X$ is strongly star-Menger.
\end{proposition}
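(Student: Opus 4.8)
The plan is to follow the same strategy as in the proof of Proposition~\ref{(SSL,(<d,H))}, simply replacing the appeal to Lemma~\ref{AlgoE*omega} by Lemma~\ref{AlguitoE**omega}(\ref{AlguitoE**omega_b_Menger}). First I would write $X=\bigcup_{\alpha<\kappa}M_\alpha$ with $\kappa<\mathfrak{b}$ and each $M_\alpha$ Menger, and fix a sequence $\{\mathcal{U}_n:n\in\omega\}$ of open covers of $X$. Using that $X$ is strongly star-Lindel\"of, for each $n\in\omega$ I would choose a countable set $C_n=\{x_n^i:i\in\omega\}\subseteq X$ with $St(C_n,\mathcal{U}_n)=X$. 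Since $St(C_n,\mathcal{U}_n)=\bigcup_{i\in\omega}St(x_n^i,\mathcal{U}_n)$, the family $\mathcal{W}_n=\{St(x_n^i,\mathcal{U}_n):i\in\omega\}$ is a countable open cover of $X$ for each $n$.

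Next I would invoke Lemma~\ref{AlguitoE**omega}(\ref{AlguitoE**omega_b_Menger}): since $X$ is a union of fewer than $\mathfrak{b}$ Menger spaces, $X$ has property $E^*_\omega$. Applying $E^*_\omega$ to the sequence $\{\mathcal{W}_n:n\in\omega\}$ of countable open covers yields, for each $n$, a finite subfamily $\mathcal{F}_n\subseteq\mathcal{W}_n$ such that $\{\bigcup\mathcal{F}_n:n\in\omega\}$ is an open cover of $X$. For each $n$, letting $F_n$ be the finite subset of $C_n$ consisting of those $x_n^i$ for which $St(x_n^i,\mathcal{U}_n)\in\mathcal{F}_n$, we have $\bigcup\mathcal{F}_n=St(F_n,\mathcal{U}_n)$. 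Hence $\{St(F_n,\mathcal{U}_n):n\in\omega\}$ is an open cover of $X$, and therefore $X$ is strongly star-Menger.

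I do not expect any genuine obstacle here; the argument is a routine adaptation of Proposition~\ref{(SSL,(<d,H))}. The only points that deserve a word of care are, first, that the conclusion ``strongly star-Menger'' asks only for an open cover at the end (not a $\gamma$-cover), so property $E^*_\omega$ — and hence Lemma~\ref{AlguitoE**omega}(\ref{AlguitoE**omega_b_Menger}) rather than its $E^{**}_\omega$ counterpart — is exactly the right input; and second, that the passage between finite subfamilies of $\mathcal{W}_n$ and finite subsets of $X$ is just the identity $\bigcup\{St(x,\mathcal{U}_n):x\in F_n\}=St(F_n,\mathcal{U}_n)$, so nothing is lost in this translation.
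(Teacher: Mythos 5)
Your proposal is correct and follows essentially the same route as the paper: reduce to countable covers by stars of points via strong star-Lindel\"ofness, then apply Lemma~\ref{AlguitoE**omega}(\ref{AlguitoE**omega_b_Menger}) to get $E^*_\omega$ and translate the finite subfamilies of $\mathcal{W}_n$ back into finite subsets $F_n$ of $X$ with $\bigcup\mathcal{F}_n=St(F_n,\mathcal{U}_n)$. No gaps.
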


\begin{proof}

Let $\kappa$ be any cardinal smaller than $\mathfrak{b}$ and put $X=\bigcup_{\alpha<\kappa}M_\alpha$ with each $M_\alpha$ being a Menger space. Let $\{\mathcal{U}_n : n \in \omega \}$ be a sequence of open covers of $X$. Since $X$ is strongly star-Lindel\"of, for each $n \in \omega$ there exists $C_n\in [X]^{\leq\omega}$ such that $St(C_n,\mathcal{U}_n) = X$. For each $n\in\omega$, put $C_n = \{x_n^i:i\in \omega\}$. Observe that for each $n\in\omega$, $St(C_n,\mathcal{U}_n)=\bigcup_{i\in\omega}St(x_n^i,\mathcal{U}_n)$. Thus, for each $n\in\omega$, the collection $\mathcal{W}_n=\{St(x_n^i,\mathcal{U}_n):i\in\omega\}$ is a countable open cover of $X$. By Lemma \ref{AlguitoE**omega} (\ref{AlguitoE**omega_b_Menger}), $X$ is $E^*_\omega$ and then,  we can get finite collections $\mathcal{F}_n$ of $\mathcal{W}_n$ such that the collection $\{\bigcup\mathcal{F}_n:n\in\omega\}$ is an open cover of $X$. Equivalently, we get finite subsets $F_n$ of $X$ such that $\{St(F_n,\mathcal{U}_n):n\in\omega\}$ is an open cover of $X$. Thus, $X$ is strongly star-Menger.
\end{proof}

\begin{proposition}
\label{(SL,(<d,H))}
If $X$ is a star-Lindel\"of space and $X$ is the union of less than $\mathfrak{d}$ Hurewicz spaces, then $X$ is star-Menger.
\end{proposition}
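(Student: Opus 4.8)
The plan is to run exactly the argument used for Proposition~\ref{(SSL,(<d,H))}, with the countable ``witnessing set of points'' replaced by a countable ``witnessing subfamily of the cover''. Write $X=\bigcup_{\alpha<\kappa}H_\alpha$ with $\kappa<\mathfrak{d}$ and each $H_\alpha$ Hurewicz, and fix a sequence $\{\mathcal{U}_n:n\in\omega\}$ of open covers of $X$. Since $X$ is star-Lindel\"of, for each $n\in\omega$ there is a countable $\mathcal{V}_n=\{V_n^i:i\in\omega\}\subseteq\mathcal{U}_n$ with $St(\bigcup\mathcal{V}_n,\mathcal{U}_n)=X$.

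The next step is the elementary observation that the star operator commutes with unions of sets: for any family $\mathcal{U}$ and any $\{A_i:i\in\omega\}$ one has $St(\bigcup_{i}A_i,\mathcal{U})=\bigcup_{i}St(A_i,\mathcal{U})$, since a member $U\in\mathcal{U}$ meets $\bigcup_i A_i$ iff it meets some $A_i$. In particular $St(\bigcup\mathcal{V}_n,\mathcal{U}_n)=\bigcup_{i\in\omega}St(V_n^i,\mathcal{U}_n)=X$, so for each $n$ the family $\mathcal{W}_n=\{St(V_n^i,\mathcal{U}_n):i\in\omega\}$ is a countable open cover of $X$. By Lemma~\ref{AlgoE*omega}, $X$ has property $E^*_\omega$; applying $E^*_\omega$ to the sequence $\{\mathcal{W}_n:n\in\omega\}$ of countable open covers yields, for each $n$, a finite set $E_n\in[\omega]^{<\omega}$ such that $\{\bigcup_{i\in E_n}St(V_n^i,\mathcal{U}_n):n\in\omega\}$ is an open cover of $X$.

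To finish, I would put $\mathcal{V}_n'=\{V_n^i:i\in E_n\}$, a finite subfamily of $\mathcal{U}_n$. Using the same commutation of the star with unions, $St(\bigcup\mathcal{V}_n',\mathcal{U}_n)=\bigcup_{i\in E_n}St(V_n^i,\mathcal{U}_n)$, so $\{St(\bigcup\mathcal{V}_n',\mathcal{U}_n):n\in\omega\}$ is an open cover of $X$, which is precisely the witness required for $X$ to be star-Menger. The argument is essentially routine bookkeeping; the only point needing a moment's care is the interchange of $St(\cdot,\mathcal{U}_n)$ with the (countable, then finite) unions $\bigcup_i V_n^i$, which is what allows one to translate finite subfamilies of the auxiliary covers $\mathcal{W}_n$ back into finite subfamilies of $\mathcal{U}_n$. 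I do not expect a genuine obstacle here: the Hurewicz hypothesis enters only through Lemma~\ref{AlgoE*omega}, which is exactly why the relevant bound is $\mathfrak{d}$ and not $\mathfrak{b}$.
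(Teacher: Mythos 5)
Your proposal is correct and follows essentially the same route as the paper's own proof: extract countable subfamilies from star-Lindel\"ofness, pass to the auxiliary countable covers $\{St(V_n^i,\mathcal{U}_n):i\in\omega\}$, invoke Lemma~\ref{AlgoE*omega} to get $E^*_\omega$, and translate the finite selections back via the distributivity of $St(\cdot,\mathcal{U}_n)$ over unions. No changes are needed.
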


\begin{proof}

Let $\kappa$ be any cardinal smaller than $\mathfrak{d}$ and put $X=\bigcup_{\alpha<\kappa}H_\alpha$ with each $H_\alpha$ being a Hurewicz space. Let $\{ \mathcal{U}_n : n \in \omega \}$ be a sequence of open covers of $X$. Since $X$ is star-Lindel\"of, for each $n \in \omega$ there exists $\mathcal{V}_n\in [\mathcal{U}_n]^{\leq\omega}$ such that $St(\bigcup\mathcal{V}_n,\mathcal{U}_n) = X$. For each $n\in\omega$, put $\mathcal{V}_n = \{V_n^i:i\in \omega\}$. Note that $St(\bigcup\mathcal{V}_n,\mathcal{U}_n)=\bigcup_{i\in\omega}St(V_n^i,\mathcal{U}_n)$  for each $n\in\omega$. So, for each $n\in\omega$, the collection $\mathcal{W}_n = \{St(V_n^i,\mathcal{U}_n):i\in\omega\}$ is a countable open cover of $X$. By Lemma \ref{AlgoE*omega}, $X$ is $E^*_\omega$ and then we can get finite subcollections $\mathcal{H}_n$ of $\mathcal{W}_n$ so that $\{\bigcup\mathcal{H}_n:n\in\omega\}$ is an open cover of $X$. That is, we get finite sets $\mathcal{F}_n$ of $\mathcal{U}_n$ such that the collection $\{St(\bigcup\mathcal{F}_n, \mathcal{U}_n):n\in\omega\}$ is an open cover of $X$. Thus, $X$ is star-Menger.
\end{proof}

\begin{proposition}
\label{(SL,(<b,H))}
If $X$ is a star-Lindel\"of space and $X$ is the union of less than $\mathfrak{b}$ Hurewicz spaces, then $X$ is star-Hurewicz.
\end{proposition}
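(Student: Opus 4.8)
The plan is to mirror the proof of Proposition \ref{(SL,(<d,H))} almost verbatim, replacing the Menger-type ingredients ($\mathfrak{d}$, $E^*_\omega$, ordinary open covers) by the Hurewicz-type ones ($\mathfrak{b}$, $E^{**}_\omega$, $\gamma$-covers). So I would start by writing $X=\bigcup_{\alpha<\kappa}H_\alpha$ with $\kappa<\mathfrak{b}$ and each $H_\alpha$ Hurewicz, and fix a sequence $\{\mathcal{U}_n:n\in\omega\}$ of open covers of $X$. Using that $X$ is star-Lindel\"of, for each $n\in\omega$ choose a countable $\mathcal{V}_n=\{V_n^i:i\in\omega\}\subseteq\mathcal{U}_n$ with $St(\bigcup\mathcal{V}_n,\mathcal{U}_n)=X$. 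Since $St(\bigcup\mathcal{V}_n,\mathcal{U}_n)=\bigcup_{i\in\omega}St(V_n^i,\mathcal{U}_n)$, the family $\mathcal{W}_n=\{St(V_n^i,\mathcal{U}_n):i\in\omega\}$ is a countable open cover of $X$.

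Next I would invoke Lemma \ref{AlguitoE**omega}(\ref{AlguitoE**omega_b_Hurewicz}) to conclude that $X$ has property $E^{**}_\omega$, and apply this to the sequence $\{\mathcal{W}_n:n\in\omega\}$ of countable open covers. This yields finite subcollections $\mathcal{H}_n\subseteq\mathcal{W}_n$ such that $\{\bigcup\mathcal{H}_n:n\in\omega\}$ is a $\gamma$-cover of $X$. Writing $\mathcal{H}_n=\{St(V_n^i,\mathcal{U}_n):i\in I_n\}$ for some $I_n\in[\omega]^{<\omega}$, set $\mathcal{F}_n=\{V_n^i:i\in I_n\}\in[\mathcal{U}_n]^{<\omega}$.

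The last step is the elementary star-monotonicity observation: $\bigcup\mathcal{H}_n=\bigcup_{i\in I_n}St(V_n^i,\mathcal{U}_n)\subseteq St(\bigcup\mathcal{F}_n,\mathcal{U}_n)$, since any $y$ lying in some $St(V_n^i,\mathcal{U}_n)$ belongs to a $U\in\mathcal{U}_n$ meeting $V_n^i\subseteq\bigcup\mathcal{F}_n$, hence $y\in St(\bigcup\mathcal{F}_n,\mathcal{U}_n)$. Therefore every $x\in X$, which lies in $\bigcup\mathcal{H}_n$ for all but finitely many $n$, also lies in $St(\bigcup\mathcal{F}_n,\mathcal{U}_n)$ for all but finitely many $n$; that is, $\{St(\bigcup\mathcal{F}_n,\mathcal{U}_n):n\in\omega\}$ is a $\gamma$-cover of $X$, so $X$ is star-Hurewicz.

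There is no substantive obstacle here: the argument is a direct transfer of Proposition \ref{(SL,(<d,H))}, and the only point needing a moment's care is that the passage from the $\gamma$-cover $\{\bigcup\mathcal{H}_n:n\in\omega\}$ to $\{St(\bigcup\mathcal{F}_n,\mathcal{U}_n):n\in\omega\}$ preserves the ``for all but finitely many $n$'' quantifier — which it does, because the inclusion $\bigcup\mathcal{H}_n\subseteq St(\bigcup\mathcal{F}_n,\mathcal{U}_n)$ holds index-by-index in $n$.
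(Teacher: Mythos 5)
Your proof is correct and follows essentially the same route as the paper's: use star-Lindel\"ofness to reduce each $\mathcal{U}_n$ to a countable cover by stars $St(V_n^i,\mathcal{U}_n)$, apply Lemma \ref{AlguitoE**omega}(\ref{AlguitoE**omega_b_Hurewicz}) to get $E^{**}_\omega$, and translate the resulting $\gamma$-cover back via the inclusion $\bigcup_{i\in I_n}St(V_n^i,\mathcal{U}_n)\subseteq St(\bigcup\mathcal{F}_n,\mathcal{U}_n)$. The only difference is that you spell out this last star-monotonicity step, which the paper leaves implicit.
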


\begin{proof}
Let $\kappa$ be any cardinal smaller than $\mathfrak{b}$ and put $X=\bigcup_{\alpha<\kappa}H_\alpha$ with each $H_\alpha$ being a Hurewicz space. Let $\{ \mathcal{U}_n : n \in \omega \}$ be a sequence of open covers of $X$. Since $X$ is star-Lindel\"of, for each $n \in \omega$ there exists $\mathcal{V}_n\in [\mathcal{U}_n]^{\leq\omega}$ such that $St(\bigcup\mathcal{V}_n,\mathcal{U}_n) = X$. For each $n\in\omega$, put $\mathcal{V}_n = \{V_n^i:i\in \omega\}$. Note that $St(\bigcup\mathcal{V}_n,\mathcal{U}_n)=\bigcup_{i\in\omega}St(V_n^i,\mathcal{U}_n)$  for each $n\in\omega$. So, for each $n\in\omega$, the collection $\mathcal{W}_n = \{St(V_n^i,\mathcal{U}_n):i\in\omega\}$ is a countable open cover of $X$. By Lemma \ref{AlguitoE**omega} (\ref{AlguitoE**omega_b_Hurewicz}), $X$ is $E^{**}_\omega$ and then we can get finite subcollections $\mathcal{H}_n$ of $\mathcal{W}_n$ so that $\{\bigcup\mathcal{H}_n:n\in\omega\}$ is a $\gamma$-cover of $X$. That is, we get finite sets $\mathcal{F}_n$ of $\mathcal{U}_n$ such that the collection $\{St(\bigcup\mathcal{F}_n, \mathcal{U}_n):n\in\omega\}$ is a $\gamma$-cover of $X$. Thus, $X$ is star-Hurewicz.
\end{proof}

\begin{proposition}
\label{(SL,(<b,M))}
If $X$ is a star-Lindel\"of space and $X$ is the union of less than $\mathfrak{b}$ Menger spaces, then $X$ is star-Menger.
\end{proposition}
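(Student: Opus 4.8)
The plan is to follow verbatim the strategy of Proposition~\ref{(SL,(<d,H))}, replacing the appeal to Lemma~\ref{AlgoE*omega} by an appeal to part~(\ref{AlguitoE**omega_b_Menger}) of Lemma~\ref{AlguitoE**omega}. So write $X=\bigcup_{\alpha<\kappa}M_\alpha$ with $\kappa<\mathfrak{b}$ and each $M_\alpha$ Menger, and fix a sequence $\{\mathcal{U}_n:n\in\omega\}$ of open covers of $X$. First I would invoke star-Lindel\"ofness: for each $n\in\omega$ pick a countable $\mathcal{V}_n=\{V_n^i:i\in\omega\}\subseteq\mathcal{U}_n$ with $St(\bigcup\mathcal{V}_n,\mathcal{U}_n)=X$, and observe, exactly as before, that since $St(\bigcup\mathcal{V}_n,\mathcal{U}_n)=\bigcup_{i\in\omega}St(V_n^i,\mathcal{U}_n)$ the collection $\mathcal{W}_n=\{St(V_n^i,\mathcal{U}_n):i\in\omega\}$ is a countable open cover of $X$.

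The key step is then to apply Lemma~\ref{AlguitoE**omega}(\ref{AlguitoE**omega_b_Menger}): because $X$ is a union of fewer than $\mathfrak{b}$ Menger spaces, $X$ has property $E^{*}_\omega$, so applied to the sequence $\{\mathcal{W}_n:n\in\omega\}$ of countable open covers it yields finite subcollections $\mathcal{H}_n\subseteq\mathcal{W}_n$ such that $\{\bigcup\mathcal{H}_n:n\in\omega\}$ is an open cover of $X$. Each member of $\mathcal{H}_n$ has the form $St(V_n^i,\mathcal{U}_n)$, so writing $\mathcal{H}_n=\{St(V_n^i,\mathcal{U}_n):i\in A_n\}$ for a suitable finite $A_n\subseteq\omega$ and setting $\mathcal{F}_n=\{V_n^i:i\in A_n\}\in[\mathcal{U}_n]^{<\omega}$, one gets $\bigcup\mathcal{H}_n=\bigcup_{i\in A_n}St(V_n^i,\mathcal{U}_n)\subseteq St(\bigcup\mathcal{F}_n,\mathcal{U}_n)$. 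Hence $\{St(\bigcup\mathcal{F}_n,\mathcal{U}_n):n\in\omega\}$ is an open cover of $X$, i.e.\ $X$ is star-Menger.

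I do not expect a genuine obstacle here: the proof is structurally identical to Propositions~\ref{(SL,(<d,H))} and \ref{(SL,(<b,H))}, and the one point requiring a moment's care is the bookkeeping between the ``derived'' cover $\mathcal{W}_n$ and the original cover $\mathcal{U}_n$ — namely that a finite selection from $\mathcal{W}_n$ pulls back to a finite $\mathcal{F}_n\subseteq\mathcal{U}_n$ whose star contains $\bigcup\mathcal{H}_n$ — which is the same device already used there. Note that, in contrast to the Hurewicz variants, here it genuinely suffices to use $E^{*}_\omega$ rather than $E^{**}_\omega$, since star-Mengerness asks only for an open cover and not for a $\gamma$-cover; this is why fewer than $\mathfrak{b}$ (and not fewer than $\mathfrak{d}$) Menger pieces are enough, paralleling Proposition~\ref{(L,(<b,SM))}.
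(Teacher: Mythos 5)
Your proof is correct and follows essentially the same route as the paper: use star-Lindel\"ofness to form the countable open covers $\mathcal{W}_n=\{St(V_n^i,\mathcal{U}_n):i\in\omega\}$, apply Lemma~\ref{AlguitoE**omega}(\ref{AlguitoE**omega_b_Menger}) to get the $E^{*}_\omega$ property, and pull the finite selections back to finite subfamilies of $\mathcal{U}_n$. You merely spell out the bookkeeping between $\mathcal{H}_n$ and $\mathcal{F}_n$ a bit more explicitly than the paper does.
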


\begin{proof}
Let $\kappa$ be any cardinal smaller than $\mathfrak{b}$ and put $X=\bigcup_{\alpha<\kappa}M_\alpha$ with each $M_\alpha$ being a Menger space. Let $\{ \mathcal{U}_n : n \in \omega \}$ be a sequence of open covers of $X$. Since $X$ is star-Lindel\"of, for each $n \in \omega$ there exists $\mathcal{V}_n\in [\mathcal{U}_n]^{\leq\omega}$ such that $St(\bigcup\mathcal{V}_n,\mathcal{U}_n) = X$. For each $n\in\omega$, put $\mathcal{V}_n = \{V_n^i:i\in \omega\}$. Note that $St(\bigcup\mathcal{V}_n,\mathcal{U}_n)=\bigcup_{i\in\omega}St(V_n^i,\mathcal{U}_n)$  for each $n\in\omega$. So, for each $n\in\omega$, the collection $\mathcal{W}_n = \{St(V_n^i,\mathcal{U}_n):i\in\omega\}$ is an countable open cover of $X$. By Lemma \ref{AlguitoE**omega} (\ref{AlguitoE**omega_b_Menger}), $X$ is $E^*_\omega$ and then we can get finite subcollections $\mathcal{H}_n$ of $\mathcal{W}_n$ so that $\{\bigcup\mathcal{H}_n:n\in\omega\}$ is an open cover of $X$. Equivalently, we obtain finite subcollections $\mathcal{F}_n$ of $\mathcal{U}_n$ so that the collection $\{St(\bigcup\mathcal{F}_n,\mathcal{U}_n):n\in\omega\}$ is an open cover of $X$. Thus, $X$ is star-Menger.
\end{proof}

Another interesting fact is that we also have theorems of same structure for the selective versions of the star selection principles. We recall the necessary definitions to state the analogous theorem for these selective versions.

The following version of the strongly star-Lindel\"{o}f property was introduced and studied by Bonanzinga in \cite{Bo}: 

\begin{definition}\label{aSSLdefinition}
A space $X$ is absolutely strongly star-Lindel\"{o}f ($aSSL$) if for any open cover $\mathcal{U}$ of $X$ and any dense subset $D$ of $X$, there is a countable set $C\subseteq D$ such that $St(C,\mathcal{U})=X$.
\end{definition}

The selective versions below are stronger properties than the classical star selection principles (see for instance \cite{CG} for more information of these properties\footnote{The Rothberger case and some other interesting properties are also given in \cite{CG}}).

\begin{definition}
We say that a space $X$ is:
\begin{enumerate}
    \item selectively strongly star-Menger ($selSSM$) if for each sequence $\{\mathcal{U}_n:n\in\omega\}$ of open covers of $X$ and each sequence $\{D_n:n\in\omega\}$ of dense sets of $X$, there exists a sequence $\{F_n:n\in\omega\}$ of finite sets such that $F_n\subseteq D_n$, $n\in\omega$, and $\{St(F_n,\mathcal{U}_n):n\in\omega\}$ is an open cover of $X$ (see \cite{Cuz}).
    \item selectively strongly star-Hurewicz ($selSSH$) if for each sequence $\{\mathcal{U}_n:n\in\omega\}$ of open covers of $X$ and each sequence $\{D_n:n\in\omega\}$ of dense sets of $X$, there exists a sequence $\{F_n:n\in\omega\}$ of finite sets such that $F_n\subseteq D_n$, $n\in\omega$, and $\{St(F_n,\mathcal{U}_n):n\in\omega\}$ is a $\gamma$-cover of $X$.
\end{enumerate}
\end{definition}

\begin{theorem}
\label{aSSLindelofUnions}
For any space $X$ the following holds:
\[
\begin{tikzcd}[row sep=2em, column sep=1em]
&&  \scalebox{0.8}{$\big(aSSL,(<\mathfrak{d},H)\big)$} \arrow[drrr, bend left ]   \arrow[dl,<-,swap,]&& & \\
&\scalebox{0.8}{$\big(aSSL,(<\mathfrak{b},H)\big)$}\arrow[rr] \arrow[dl]&&\scalebox{0.8}{$SelSSH$} \arrow[rr] &&\scalebox{0.8}{$SelSSM$} \\
\scalebox{0.8}{$\big(aSSL,(<\mathfrak{b},M)\big)$} \arrow[urrrrr, bend right] & && &&
\end{tikzcd}
\]
\end{theorem}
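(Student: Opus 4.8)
The plan is to mimic the three propositions that constitute Theorem \ref{starLindelofUnions} (namely Propositions \ref{(SSL,(<d,H))}, \ref{(SSL,(<b,H))}, \ref{(SSL,(<b,M))}), replacing ``strongly star-Lindel\"of'' by ``$aSSL$'' and ``strongly star-Menger/Hurewicz'' by ``$selSSM$/$selSSH$'' throughout, and checking that the extra data (a sequence of dense sets) can be carried along. So the theorem should again be split into three propositions:

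\begin{proposition}
If $X$ is $aSSL$ and $X$ is the union of less than $\mathfrak{d}$ Hurewicz spaces, then $X$ is $selSSM$.
\end{proposition}
\begin{proposition}
If $X$ is $aSSL$ and $X$ is the union of less than $\mathfrak{b}$ Hurewicz spaces, then $X$ is $selSSH$.
\end{proposition}
\begin{proposition}
If $X$ is $aSSL$ and $X$ is the union of less than $\mathfrak{b}$ Menger spaces, then $X$ is $selSSM$.
\end{proposition}
The implications $selSSH\to selSSM$ and the two ``$(<\mathfrak{b},\cdot)\to(<\mathfrak{d},\cdot)$'' arrows are trivial (a union of $<\mathfrak b$ spaces is a union of $<\mathfrak d$ spaces, and a $\gamma$-cover is a cover), so proving these three propositions yields the whole diagram.

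For the first proposition, write $X=\bigcup_{\alpha<\kappa}H_\alpha$ with $\kappa<\mathfrak d$ and each $H_\alpha$ Hurewicz. Let $\{\mathcal U_n:n\in\omega\}$ be a sequence of open covers and $\{D_n:n\in\omega\}$ a sequence of dense sets. Since $X$ is $aSSL$, for each $n$ there is a \emph{countable} $C_n\subseteq D_n$ with $St(C_n,\mathcal U_n)=X$; enumerate $C_n=\{x_n^i:i\in\omega\}$. As in Proposition \ref{(SSL,(<d,H))}, the collections $\mathcal W_n=\{St(x_n^i,\mathcal U_n):i\in\omega\}$ are countable open covers of $X$, so by Lemma \ref{AlgoE*omega} (applicable since $X$ is a $<\mathfrak d$ union of Hurewicz spaces) $X$ has $E^*_\omega$, and we extract finite $\mathcal F_n\subseteq\mathcal W_n$ with $\{\bigcup\mathcal F_n:n\in\omega\}$ a cover of $X$. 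Each element of $\mathcal F_n$ has the form $St(x_n^i,\mathcal U_n)$ for finitely many indices $i$; collecting those $x_n^i$ into a finite set $F_n\subseteq C_n\subseteq D_n$ gives $\bigcup\mathcal F_n\subseteq St(F_n,\mathcal U_n)$, hence $\{St(F_n,\mathcal U_n):n\in\omega\}$ is an open cover of $X$ with $F_n\subseteq D_n$; this is exactly $selSSM$. The second and third propositions are identical word for word, except that one invokes Lemma \ref{AlguitoE**omega}(\ref{AlguitoE**omega_b_Hurewicz}) (resp.\ Lemma \ref{AlguitoE**omega}(\ref{AlguitoE**omega_b_Menger})), obtains $E^{**}_\omega$ (resp.\ $E^*_\omega$), and thus gets a $\gamma$-cover (resp.\ cover) at the end, yielding $selSSH$ (resp.\ $selSSM$).

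The only genuinely new point over Theorem \ref{starLindelofUnions} --- and the step I would watch most carefully --- is the bookkeeping that turns the $E^*_\omega$/$E^{**}_\omega$ selection back into a \emph{$selSSM$/$selSSH$} selection rather than just an $SSM$/$SSH$ selection: one must verify that the witnessing finite sets $F_n$ can be chosen inside the prescribed $D_n$. This works precisely because the $aSSL$ property (unlike mere strong star-Lindel\"ofness) lets us pick the countable set $C_n$ inside $D_n$ from the start, so every $x_n^i$ already lies in $D_n$ and no further adjustment is needed. Everything else --- countability of the $\mathcal W_n$, the inclusion $St(\bigcup\mathcal F_n',\mathcal U_n)\subseteq St(F_n,\mathcal U_n)$ where $F_n=\{x_n^i: St(x_n^i,\mathcal U_n)\in\mathcal F_n\}$, and the final passage from cover/$\gamma$-cover of stars to the definition --- is routine and parallels the earlier proofs verbatim.
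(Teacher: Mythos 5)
Your proposal is correct and follows essentially the same route as the paper: the authors also split the theorem into the three propositions $\big(aSSL,(<\mathfrak{d},H)\big)\to selSSM$, $\big(aSSL,(<\mathfrak{b},H)\big)\to selSSH$, $\big(aSSL,(<\mathfrak{b},M)\big)\to selSSM$, use $aSSL$ to pick the countable witness inside each $D_n$, form the countable covers by stars of points, and invoke Lemma \ref{AlgoE*omega} resp.\ Lemma \ref{AlguitoE**omega} exactly as you do. Your explicit remark that the finite sets $F_n$ land in $D_n$ because the countable sets were chosen in $D_n$ from the start is precisely the point the paper leaves implicit by referring back to Propositions \ref{(SSL,(<d,H))}--\ref{(SSL,(<b,M))}.
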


\noindent The proof is divided as Propositions \ref{(aSSL,(<d,H))}, \ref{(aSSL,(<b,H))}, \ref{(aSSL,(<b,M))}.

\begin{proposition}
\label{(aSSL,(<d,H))}
If $X$ is an absolutely strongly star-Lindel\"of space and $X$ is the union of less than $\mathfrak{d}$ Hurewicz spaces, then $X$ is selectively strongly star-Menger.
\end{proposition}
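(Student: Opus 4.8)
The plan is to mimic the proof of Proposition \ref{(SSL,(<d,H))}, but carrying the dense sets through the argument. Suppose $X=\bigcup_{\alpha<\kappa}H_\alpha$ with $\kappa<\mathfrak{d}$ and each $H_\alpha$ Hurewicz, and suppose $X$ is $aSSL$. Fix a sequence $\{\mathcal{U}_n:n\in\omega\}$ of open covers of $X$ and a sequence $\{D_n:n\in\omega\}$ of dense subsets of $X$. First, for each $n$ I would use the $aSSL$ property applied to $\mathcal{U}_n$ and the dense set $D_n$ to obtain a countable set $C_n\subseteq D_n$ with $St(C_n,\mathcal{U}_n)=X$; write $C_n=\{x_n^i:i\in\omega\}$. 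As in Proposition \ref{(SSL,(<d,H))}, since $St(C_n,\mathcal{U}_n)=\bigcup_{i\in\omega}St(x_n^i,\mathcal{U}_n)$, the family $\mathcal{W}_n=\{St(x_n^i,\mathcal{U}_n):i\in\omega\}$ is a countable open cover of $X$.

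Next I would invoke Lemma \ref{AlgoE*omega}: since $X$ is a union of fewer than $\mathfrak{d}$ Hurewicz spaces, $X$ is $E^*_\omega$, so applied to the sequence $\{\mathcal{W}_n:n\in\omega\}$ of countable open covers we obtain finite subfamilies $\mathcal{F}_n\subseteq\mathcal{W}_n$ such that $\{\bigcup\mathcal{F}_n:n\in\omega\}$ is an open cover of $X$. Now translate back: each $\mathcal{F}_n$ corresponds to a finite index set $I_n\subseteq\omega$ with $\mathcal{F}_n=\{St(x_n^i,\mathcal{U}_n):i\in I_n\}$; set $F_n=\{x_n^i:i\in I_n\}$, a finite subset of $C_n\subseteq D_n$. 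Then $\bigcup\mathcal{F}_n=\bigcup_{i\in I_n}St(x_n^i,\mathcal{U}_n)=St(F_n,\mathcal{U}_n)$, so $\{St(F_n,\mathcal{U}_n):n\in\omega\}$ is an open cover of $X$ with $F_n\in[D_n]^{<\omega}$. This witnesses that $X$ is $selSSM$.

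There is no real obstacle here; the only point that requires a moment's care is the bookkeeping that lets the finite subcover of $\mathcal{W}_n$ be rewritten as a star $St(F_n,\mathcal{U}_n)$ with $F_n\subseteq D_n$ — this is exactly why we must feed $D_n$ into the $aSSL$ property at the very first step rather than at the end, so that the points $x_n^i$ live in $D_n$ from the outset. Everything else is a verbatim adaptation of the argument for Proposition \ref{(SSL,(<d,H))}, with Lemma \ref{AlgoE*omega} supplying the $E^*_\omega$ property from the small-union hypothesis.
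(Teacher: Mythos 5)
Your proof is correct and follows exactly the paper's argument: apply $aSSL$ to each $\mathcal{U}_n$ with the dense set $D_n$ to get a countable $C_n\subseteq D_n$ with $St(C_n,\mathcal{U}_n)=X$, pass to the countable covers $\{St(x_n^i,\mathcal{U}_n):i\in\omega\}$, and invoke Lemma \ref{AlgoE*omega} as in Proposition \ref{(SSL,(<d,H))} to extract the finite sets $F_n\subseteq D_n$ witnessing $selSSM$. No issues.
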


\begin{proof}
Assume $X$ is $aSSL$ and let $\kappa < \mathfrak{d}$ such that $X = \bigcup _{\alpha < \kappa}H_\alpha$ with each $H_\alpha$ being a Hurewicz space.
Let $\{\mathcal{U}_n: n\in \omega\}$ be any sequence of open covers and let $\{D_n:n\in \omega\}$ be any sequence of dense subsets of $X$. For each $n \in \omega$ fix  $E_n =\{d_q^n:q\in \omega\}\in [D_n]^{\leq \omega}$ such that $St(E_n,\mathcal{U}_n) = X$.
Thus, for each $n\in \omega$, the collection $\{St(d_q^n,\mathcal{U}_n):q\in \omega\}$ is a countable open cover of $X$. In a similar fashion as Proposition \ref{(SSL,(<d,H))}, we use Lemma \ref{AlgoE*omega} to conclude the proof.
\end{proof}

\begin{proposition}
\label{(aSSL,(<b,H))}
If $X$ is an absolutely strongly star-Lindel\"of space and $X$ is the union of less than $\mathfrak{b}$ Hurewicz spaces, then $X$ is selectively strongly star-Hurewicz.
\end{proposition}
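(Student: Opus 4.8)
The plan is to mimic the proof of Proposition \ref{(SSL,(<b,H))} verbatim, replacing the arbitrary choice of a countable witnessing set for strong star-Lindel\"ofness by the choice guaranteed by $aSSL$ relative to the given dense sets, and replacing the appeal to Lemma \ref{AlgoE*omega} by an appeal to Lemma \ref{AlguitoE**omega}(\ref{AlguitoE**omega_b_Hurewicz}). Concretely: assume $X$ is $aSSL$ and $X=\bigcup_{\alpha<\kappa}H_\alpha$ with $\kappa<\mathfrak{b}$ and each $H_\alpha$ Hurewicz. Fix a sequence $\{\mathcal{U}_n:n\in\omega\}$ of open covers of $X$ and a sequence $\{D_n:n\in\omega\}$ of dense subsets of $X$. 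By $aSSL$, for each $n$ there is a countable $E_n=\{d^n_q:q\in\omega\}\subseteq D_n$ with $St(E_n,\mathcal{U}_n)=X$, and since $St(E_n,\mathcal{U}_n)=\bigcup_{q\in\omega}St(d^n_q,\mathcal{U}_n)$, the collection $\mathcal{W}_n=\{St(d^n_q,\mathcal{U}_n):q\in\omega\}$ is a countable open cover of $X$.

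Next I would apply Lemma \ref{AlguitoE**omega}(\ref{AlguitoE**omega_b_Hurewicz}): since $X$ is a union of fewer than $\mathfrak{b}$ many Hurewicz spaces, $X$ has the $E^{**}_\omega$ property, so for the sequence $\{\mathcal{W}_n:n\in\omega\}$ of countable open covers there are finite subcollections $\mathcal{H}_n\subseteq\mathcal{W}_n$ with $\{\bigcup\mathcal{H}_n:n\in\omega\}$ a $\gamma$-cover of $X$. For each $n$, write $\mathcal{H}_n=\{St(d^n_q,\mathcal{U}_n):q\in S_n\}$ for some finite $S_n\subseteq\omega$, and set $F_n=\{d^n_q:q\in S_n\}$. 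Then $F_n\subseteq D_n$ is finite and $\bigcup\mathcal{H}_n=\bigcup_{q\in S_n}St(d^n_q,\mathcal{U}_n)=St(F_n,\mathcal{U}_n)$, so $\{St(F_n,\mathcal{U}_n):n\in\omega\}$ is a $\gamma$-cover of $X$. Hence $X$ is $selSSH$.

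There is essentially no obstacle here — the argument is a routine adaptation, and the only thing to be careful about is the bookkeeping identifying $\bigcup\mathcal{H}_n$ with $St(F_n,\mathcal{U}_n)$, i.e.\ that passing to a finite subcollection of the $\mathcal{W}_n$ corresponds exactly to passing to a finite subset of $E_n\subseteq D_n$. The real content was already packaged into Lemma \ref{AlguitoE**omega}(\ref{AlguitoE**omega_b_Hurewicz}) and the definition of $aSSL$; given the proof of Proposition \ref{(aSSL,(<d,H))} just above, one can reasonably abbreviate this proof by saying ``in a similar fashion as Proposition \ref{(SSL,(<b,H))}, we use Lemma \ref{AlguitoE**omega}(\ref{AlguitoE**omega_b_Hurewicz}) to conclude.''
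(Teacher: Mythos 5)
Your proposal is correct and follows essentially the same route as the paper: use $aSSL$ to extract countable $E_n\subseteq D_n$ witnessing $St(E_n,\mathcal{U}_n)=X$, form the countable open covers $\mathcal{W}_n=\{St(d^n_q,\mathcal{U}_n):q\in\omega\}$, and apply Lemma \ref{AlguitoE**omega}(\ref{AlguitoE**omega_b_Hurewicz}) exactly as in Proposition \ref{(SSL,(<b,H))} to obtain finite $F_n\subseteq D_n$ with $\{St(F_n,\mathcal{U}_n):n\in\omega\}$ a $\gamma$-cover. The bookkeeping identifying $\bigcup\mathcal{H}_n$ with $St(F_n,\mathcal{U}_n)$ is the same translation the paper leaves implicit.
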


\begin{proof}
Assume $X$ is $aSSL$ and let $\kappa < \mathfrak{b}$ such that $X = \bigcup _{\alpha < \kappa}H_\alpha$ with each $H_\alpha$ being a Hurewicz space.
Let $\{\mathcal{U}_n: n\in \omega\}$ be any sequence of open covers and let $\{D_n:n\in \omega\}$ be any sequence of dense subsets of $X$. For each $n \in \omega$ fix  $E_n =\{d_q^n:q\in \omega\}\in [D_n]^{\leq \omega}$ such that $St(E_n,\mathcal{U}_n) = X$.
Thus, for each $n\in \omega$, the collection $\{St(d_q^n,\mathcal{U}_n):q\in \omega\}$ is a countable open cover of $X$. In a similar fashion as Proposition \ref{(SSL,(<b,H))}, we use Lemma \ref{AlguitoE**omega} (\ref{AlguitoE**omega_b_Hurewicz}) to complete the proof.
\end{proof}

\begin{proposition}
\label{(aSSL,(<b,M))}
If $X$ is an absolutely strongly star-Lindel\"of space and $X$ is the union of less than $\mathfrak{b}$ Menger spaces, then $X$ is selectively strongly star-Menger.
\end{proposition}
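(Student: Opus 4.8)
The plan is to mimic the proof of Proposition \ref{(SSL,(<b,M))}, the only change being that absolute (rather than plain) strong star-Lindel\"ofness is used so that the witnessing countable sets can be chosen \emph{inside} the prescribed dense sets. Write $X = \bigcup_{\alpha < \kappa} M_\alpha$ with $\kappa < \mathfrak{b}$ and each $M_\alpha$ Menger, and fix a sequence $\{\mathcal{U}_n : n \in \omega\}$ of open covers of $X$ together with a sequence $\{D_n : n \in \omega\}$ of dense subsets of $X$.

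First I would apply the $aSSL$ property: for each $n \in \omega$ there is a countable set $E_n = \{d_q^n : q \in \omega\} \subseteq D_n$ with $St(E_n,\mathcal{U}_n) = X$. Since $St(E_n,\mathcal{U}_n) = \bigcup_{q\in\omega} St(d_q^n,\mathcal{U}_n)$, the collection $\mathcal{W}_n = \{St(d_q^n,\mathcal{U}_n) : q \in \omega\}$ is a countable open cover of $X$ for each $n\in\omega$.

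Next I would invoke Lemma \ref{AlguitoE**omega} (\ref{AlguitoE**omega_b_Menger}): because $X$ is a union of fewer than $\mathfrak{b}$ Menger spaces, $X$ has property $E^*_\omega$. Applying $E^*_\omega$ to the sequence $\{\mathcal{W}_n : n \in \omega\}$ of countable open covers produces finite subcollections $\mathcal{H}_n \subseteq \mathcal{W}_n$ such that $\{\bigcup \mathcal{H}_n : n \in \omega\}$ is an open cover of $X$. Each $\mathcal{H}_n$ has the form $\{St(d_q^n,\mathcal{U}_n) : q \in K_n\}$ for some finite $K_n \subseteq \omega$; putting $F_n = \{d_q^n : q \in K_n\}$, which is a finite subset of $E_n \subseteq D_n$, we get $\bigcup \mathcal{H}_n = St(F_n,\mathcal{U}_n)$. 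Hence $\{St(F_n,\mathcal{U}_n) : n \in \omega\}$ is an open cover of $X$ with $F_n \in [D_n]^{<\omega}$ for every $n$, which is exactly what the definition of $selSSM$ requires.

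There is essentially no hard step here: the entire combinatorial content is already packaged inside Lemma \ref{AlguitoE**omega}, and the rest rests on the elementary observation that the star of a countable set decomposes as the union of the stars of its points. The one point that needs care is the bookkeeping, namely that the witnessing countable sets must be drawn from the $D_n$ (which is precisely the strengthening $aSSL$ provides over $SSL$) and that passing to a finite subcollection of $\mathcal{W}_n$ corresponds to passing to a finite subset of $D_n$, so that the conclusion genuinely witnesses $selSSM$ and not merely $SSM$.
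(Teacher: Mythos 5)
Your proof is correct and follows essentially the same route as the paper: apply $aSSL$ to get countable $E_n\subseteq D_n$ with $St(E_n,\mathcal{U}_n)=X$, form the countable covers by point-stars, and invoke Lemma \ref{AlguitoE**omega}(\ref{AlguitoE**omega_b_Menger}) to extract finite $F_n\subseteq D_n$ whose stars cover $X$. The bookkeeping point you highlight (that finite subcollections of $\mathcal{W}_n$ correspond to finite subsets of $E_n\subseteq D_n$) is exactly the content the paper leaves implicit by referring back to Proposition \ref{(SSL,(<b,M))}.
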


\begin{proof}
Assume $X$ is $aSSL$ and let $\kappa < \mathfrak{b}$ such that $X = \bigcup _{\alpha < \kappa}M_\alpha$ with each $M_\alpha$ being a Menger space.
Let $\{\mathcal{U}_n: n\in \omega\}$ be any sequence of open covers and let $\{D_n:n\in \omega\}$ be any sequence of dense subsets of $X$. For each $n \in \omega$ fix  $E_n =\{d_q^n:q\in \omega\}\in [D_n]^{\leq \omega}$ such that $St(E_n,\mathcal{U}_n) = X$.
Thus, for each $n\in \omega$, the collection $\{St(d_q^n,\mathcal{U}_n):q\in \omega\}$ is a countable open cover of $X$. In a similar way as Proposition \ref{(SSL,(<b,M))}, we use Lemma \ref{AlguitoE**omega} (\ref{AlguitoE**omega_b_Menger}) to conclude the proof.
\end{proof}

\section{Iterated stars}\label{sectioniteratedstars}

A well-known study about iterations of star versions of Lindel\"of properties was made in \cite{DRRT} (see also \cite{M}). 
The previous section motivated a similar study for star selection properties. We start giving some results that involve refinements of open covers.

In \cite{DRRT} the properties $n$-star Lindel\"{o}f  and strongly $n$-star Lindel\"{o}f are defined and the authors show that every $n$-star Lindel\"{o}f space is strongly $n+1$-star-Lindel\"{o}f (Theorem 3.1.1 (3)). In the class of metaLindel\"{o}f spaces the converse holds:

\begin{proposition}\label{ML+(n+1)SSLimplies(n)SL}
If $X$ is metaLindel\"{o}f and strongly $n+1$-star Lindel\"{o}f then $X$ is $n$-star Lindel\"{o}f.
\end{proposition}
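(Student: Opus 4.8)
First I would pin down the definitions from \cite{DRRT}: a space $X$ is \emph{$n$-star Lindel\"of} if for every open cover $\mathcal{U}$ there is a countable $\mathcal{V}\subseteq\mathcal{U}$ with $St^n(\bigcup\mathcal{V},\mathcal{U})=X$, and \emph{strongly $n$-star Lindel\"of} if there is a countable $C\subseteq X$ with $St^n(C,\mathcal{U})=X$ (where $St^n$ denotes the $n$-fold iterated star). So given $X$ metaLindel\"of and strongly $(n+1)$-star Lindel\"of, I want to produce, from an arbitrary open cover $\mathcal{U}$, a countable subfamily whose $n$-fold star is all of $X$. The plan is to first replace $\mathcal{U}$ by a point-countable open refinement $\mathcal{W}$, apply the strong $(n+1)$-star hypothesis to $\mathcal{W}$ to get a countable $C$ with $St^{n+1}(C,\mathcal{W})=X$, and then convert that ``strong'' star into an ``ordinary'' star by absorbing one layer of the iteration: since $\mathcal{W}$ is point-countable, only countably many members of $\mathcal{W}$ meet $C$, so $St(C,\mathcal{W})=\bigcup\{W\in\mathcal{W}:W\cap C\neq\emptyset\}$ is a union of countably many members of $\mathcal{W}$.

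\textbf{Carrying out the conversion.}
Let $\mathcal{V}_0=\{W\in\mathcal{W}:W\cap C\neq\emptyset\}$; this is countable because $\mathcal{W}$ is point-countable and $C$ is countable (for each $c\in C$ only countably many $W$ contain $c$, and a countable union of countable sets is countable). Then $\bigcup\mathcal{V}_0=St(C,\mathcal{W})$, hence $St^{n}(\bigcup\mathcal{V}_0,\mathcal{W})=St^{n}(St(C,\mathcal{W}),\mathcal{W})=St^{n+1}(C,\mathcal{W})=X$. So $\mathcal{V}_0$ is a countable subfamily of $\mathcal{W}$ witnessing $n$-star Lindel\"ofness \emph{with respect to $\mathcal{W}$}. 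The last step is to push this back to $\mathcal{U}$: since $\mathcal{W}$ refines $\mathcal{U}$, choose for each $W\in\mathcal{V}_0$ a $U_W\in\mathcal{U}$ with $W\subseteq U_W$, and let $\mathcal{V}=\{U_W:W\in\mathcal{V}_0\}$, a countable subfamily of $\mathcal{U}$. Because $\mathcal{W}$ refines $\mathcal{U}$, one has $St(A,\mathcal{W})\subseteq St(A,\mathcal{U})$ for every $A$, and $\bigcup\mathcal{V}_0\subseteq\bigcup\mathcal{V}$; iterating, $X=St^n(\bigcup\mathcal{V}_0,\mathcal{W})\subseteq St^n(\bigcup\mathcal{V},\mathcal{U})$, so $St^n(\bigcup\mathcal{V},\mathcal{U})=X$. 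Hence $X$ is $n$-star Lindel\"of.

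\textbf{Where the difficulty lies.}
The genuinely load-bearing step is the monotonicity/refinement bookkeeping for iterated stars: one must be careful that $St^n(\,\cdot\,,\mathcal{W})\subseteq St^n(\,\cdot\,,\mathcal{U})$ really does follow from $\mathcal{W}$ refining $\mathcal{U}$, and that replacing $\bigcup\mathcal{V}_0$ by the larger set $\bigcup\mathcal{V}$ can only enlarge the iterated star. Both are routine inductions on $n$ using that $A\subseteq B$ implies $St(A,\mathcal{U})\subseteq St(B,\mathcal{U})$ and that $\mathcal{W}\prec\mathcal{U}$ implies $St(A,\mathcal{W})\subseteq St(A,\mathcal{U})$, but they are the only place where the refinement hypothesis is used, so they deserve to be spelled out. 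The point-countability of $\mathcal{W}$ is used exactly once, and crucially: it is what lets a single layer of iterated star ``$C\mapsto St(C,\mathcal{W})$'' be rewritten as a \emph{countable} union of basic open sets, converting strong $(n+1)$-star behavior into ordinary $n$-star behavior. I would also remark that the same argument with ``countable'' replaced by ``finite'' and ``metaLindel\"of'' by ``metacompact'' gives the analogous statement for the compact-type properties, which may be worth noting if \cite{DRRT} states that case too.
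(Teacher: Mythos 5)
Your proof is correct and follows essentially the same route as the paper: pass to a point-countable refinement, apply strong $(n+1)$-star Lindel\"ofness to it, use point-countability to replace $St(C,\mathcal{W})$ by a countable union of cover members, and transfer back to $\mathcal{U}$ via the monotonicity of iterated stars under refinement. The only difference is that you spell out the refinement bookkeeping that the paper compresses into ``we can assume $\mathcal{U}$ is point-countable.''
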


\begin{proof}
 Let $\mathcal{U}$ be an open cover of $X$. Since $X$ is metaLindel\"{o}f we can assume that $\mathcal{U}$ is point-countable. Let $C \in [X]^\omega$ so that $St^n\big(St(C,\mathcal{U}),\mathcal{U}\big) = X$. Since $\mathcal{U}$ is point-countable, there exists $W \in [\mathcal{U}]^{\omega}$ so that $St(C,\mathcal{U}) \subseteq \bigcup W$. Hence, $St^{n+1}(C,\mathcal{U}) \subseteq St^n(\bigcup W, \mathcal{U})$ i.e., $X$ is $n$-star-Lindel\"{o}f.
\end{proof}

\begin{lemma}[Folklore]
\label{containmentsOfStars}
Let $\mathcal{U}$ be an open cover of a topological space $X$. If $A \subseteq B \subseteq X$ and $\mathcal{V}$ is a refinement of $\mathcal{U}$, then for each $n \in \omega$, $St^n(A,\mathcal{V}) \subseteq St^n(B , \mathcal{U})$.
\end{lemma}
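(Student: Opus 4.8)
The plan is to prove the statement by induction on $n$, after first isolating the obvious one-step monotonicity fact. The only structural input needed is the definition of iterated star ($St^{0}(A,\mathcal{W})=A$ and $St^{k+1}(A,\mathcal{W})=St\big(St^{k}(A,\mathcal{W}),\mathcal{W}\big)$) together with the definition of refinement (every member of $\mathcal{V}$ is contained in some member of $\mathcal{U}$).

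First I would establish the base case, $n=0$, which is immediate: $St^{0}(A,\mathcal{V})=A\subseteq B=St^{0}(B,\mathcal{U})$ by hypothesis. Next I would prove the single-step claim: if $A'\subseteq B'$ then $St(A',\mathcal{V})\subseteq St(B',\mathcal{U})$. To see this, take $x\in St(A',\mathcal{V})$, so there is $V\in\mathcal{V}$ with $x\in V$ and $V\cap A'\neq\emptyset$. Since $\mathcal{V}$ refines $\mathcal{U}$, choose $U\in\mathcal{U}$ with $V\subseteq U$; then $x\in U$ and $U\cap B'\supseteq V\cap A'\neq\emptyset$ (using $A'\subseteq B'$), hence $x\in St(B',\mathcal{U})$.

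For the inductive step, assume $St^{n}(A,\mathcal{V})\subseteq St^{n}(B,\mathcal{U})$. Applying the single-step claim with $A'=St^{n}(A,\mathcal{V})$ and $B'=St^{n}(B,\mathcal{U})$ yields
\[
St^{n+1}(A,\mathcal{V})=St\big(St^{n}(A,\mathcal{V}),\mathcal{V}\big)\subseteq St\big(St^{n}(B,\mathcal{U}),\mathcal{U}\big)=St^{n+1}(B,\mathcal{U}),
\]
which closes the induction.

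I do not expect any genuine obstacle here; the lemma is folklore and the argument is purely set-theoretic bookkeeping. The only point worth stating carefully is that the refinement property is used exactly once, in the single-step claim, to replace the witnessing set $V\in\mathcal{V}$ by a larger $U\in\mathcal{U}$; everything else is monotonicity of the star operation in its first argument and a trivial induction.
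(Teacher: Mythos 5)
Your proof is correct: the induction on $n$, with the one-step monotonicity-plus-refinement claim doing all the work, is exactly the standard argument for this folklore fact. The paper itself omits any proof of Lemma \ref{containmentsOfStars}, so there is nothing to diverge from; your write-up supplies the expected details without any gap.
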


\begin{proposition}
\label{ParalindelofnStarLindelofIsnstronglystarLindelof}
Every Paralindel\"of and $n$-star Lindel\"of  space $X$ is $n$-strongly star Lindel\"of for each $n \in \omega$.
\end{proposition}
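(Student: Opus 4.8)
The plan is to improve the trivial implication ``$n$-star Lindel\"of $\Rightarrow$ strongly $(n+1)$-star Lindel\"of'' (Theorem 3.1.1(3) of \cite{DRRT}) by saving exactly one star, using the paraLindel\"of hypothesis only to replace the open union $\bigcup W$ produced by an $n$-star Lindel\"of witness by a \emph{countable set of points at the very same star level}. The whole subtlety is to avoid an off-by-one: bounding $\bigcup W\subseteq St(C,\mathcal V)$ would cost an extra star and only reprove the trivial implication, so instead I will arrange $St(\bigcup W,\mathcal V)\subseteq St(C,\mathcal V)$, peeling the innermost star on both sides simultaneously. The engine for producing such a $C$ is the local countability of a refinement.

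Concretely, let $\mathcal U$ be an open cover of $X$. First I would invoke paraLindel\"ofness to fix a locally-countable open refinement $\mathcal V$ of $\mathcal U$. Applying the $n$-star Lindel\"of property of $X$ to $\mathcal V$ yields a countable $W\subseteq\mathcal V$ with $St^n(\bigcup W,\mathcal V)=X$; write $A=\bigcup W$. Next I would select points: letting $\mathcal V_A=\{V\in\mathcal V:V\cap A\neq\emptyset\}$ and choosing one $c_V\in V\cap A$ for each $V\in\mathcal V_A$, set $C=\{c_V:V\in\mathcal V_A\}$. By construction every member of $\mathcal V$ meeting $A$ also meets $C$, so $St(A,\mathcal V)=\bigcup\mathcal V_A\subseteq St(C,\mathcal V)$. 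Feeding this into the $(n-1)$-fold star and using monotonicity (the $\mathcal U=\mathcal V$ case of Lemma \ref{containmentsOfStars}) gives $X=St^n(A,\mathcal V)=St^{n-1}(St(A,\mathcal V),\mathcal V)\subseteq St^{n-1}(St(C,\mathcal V),\mathcal V)=St^n(C,\mathcal V)$. Finally, since $\mathcal V$ refines $\mathcal U$, Lemma \ref{containmentsOfStars} (now with $A=B=C$) yields $St^n(C,\mathcal V)\subseteq St^n(C,\mathcal U)$, whence $St^n(C,\mathcal U)=X$. Thus $C$ witnesses strong $n$-star Lindel\"ofness, \emph{provided $C$ is countable}.

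The main obstacle is exactly that last proviso: I must show the trace family $\mathcal V_A$ is countable, i.e.\ that $A=\bigcup W$ meets only countably many members of $\mathcal V$. Since $W$ is countable it suffices to prove that each single $V\in W$ meets only countably many members, and this is where local countability is meant to do the work: every point of $V$ has a neighbourhood meeting only countably many members of $\mathcal V$, these neighbourhoods cover $V$, and \emph{once $V$ is Lindel\"of} a countable subfamily of them suffices, forcing $V$ to meet only countably many members. Hence the crux reduces to controlling the size of the pieces $V\in W$ (equivalently, to knowing $A$ is Lindel\"of) so that ``locally countable'' upgrades to ``countable''. This bridge from local countability to genuine countability is the delicate point of the argument and the only place where the hypotheses must really be exploited; I expect it to require refining $\mathcal U$ more carefully (e.g.\ into open sets small enough to be Lindel\"of) rather than taking an arbitrary locally-countable refinement. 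Once $\mathcal V_A$ is countable, the iterated-star bookkeeping in the previous paragraph is entirely routine via Lemma \ref{containmentsOfStars}.
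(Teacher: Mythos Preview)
Your argument has a genuine gap, and you have already put your finger on it: you need each $V\in W$ to meet only countably many members of $\mathcal V$, and local countability of $\mathcal V$ does \emph{not} give this without knowing $V$ is Lindel\"of. There is no reason the members of a locally-countable refinement should be Lindel\"of, and ``refining $\mathcal U$ more carefully'' into Lindel\"of pieces is essentially assuming what you are trying to prove (a paraLindel\"of space with an open cover by Lindel\"of sets is already Lindel\"of). So as written the proof does not close.

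The fix---and this is exactly what the paper does---is to interchange the roles of the two covers. First pass to a locally-countable open refinement and call \emph{that} $\mathcal U$ (this is the ``without loss of generality'' step). Now build $\mathcal V$ from the \emph{witnesses} to local countability: for each $x\in X$ choose an open $U_x$ with $x\in U_x$, $U_x$ contained in some member of $\mathcal U$, and $\{U\in\mathcal U:U\cap U_x\neq\emptyset\}$ countable; set $\mathcal V=\{U_x:x\in X\}$. Apply $n$-star Lindel\"of to $\mathcal V$ to get a countable $C\subseteq X$ with $St^n(\bigcup_{x\in C}U_x,\mathcal V)=X$. Now count members of $\mathcal U$ (not $\mathcal V$!) meeting $\bigcup_{x\in C}U_x$: by the very choice of each $U_x$ this family $\mathcal W$ is countable, with no Lindel\"ofness needed. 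Picking $y_W\in W$ for $W\in\mathcal W$, one checks $St(\bigcup_{x\in C}U_x,\mathcal V)\subseteq St(\{y_W:W\in\mathcal W\},\mathcal U)$: any $V\in\mathcal V$ meeting the union sits inside some $U\in\mathcal U$, which then lies in $\mathcal W$ and contains $y_U$. Lemma \ref{containmentsOfStars} finishes as in your last paragraph. The point you were missing is that the countability should be counted in the coarser cover $\mathcal U$, where each member of $\mathcal V$ is small \emph{by construction}, rather than in $\mathcal V$ itself.
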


\begin{proof}
Assume $X$ is a Paralindel\"of and $n$-star Lindel\"of space for some $n \in \omega$. Let $\mathcal{U}$ be an open cover of $X$. Without loss of generality we can assume that  $\mathcal{U}$ is locally countable. For each $x \in X$, let $U_x$ be an open set so that $|\{U \in \mathcal{U}: U_x \cap U \neq \emptyset\}| \leq \omega$ and $\mathcal{V} := \{U_x :x \in X\}$ refines $\mathcal{U}$. Since $X$ is $n$-star Lindel\"of, there exists $C \in [X]^{\leq \omega}$ so that $St^n(\bigcup _{x\in C}U_x,\mathcal{V}) = X$. \\
For $x \in C$, let $\mathcal{W}_x = \{U \in \mathcal{U}:U_x \cap U \neq \emptyset\}$. Then $|\mathcal{W}_x|\leq \omega$. If $\mathcal{W} = \bigcup _{x \in C}\mathcal{W}_x$, then $|\mathcal{W}| \leq \omega$. Thus, for each $W \in \mathcal{W}$, fix $y_W \in W$.\\
Claim: $St(\bigcup _{x\in C}U_x,\mathcal{V}) \subseteq St(\{y_W:W \in \mathcal{W}\},\mathcal{U})$.\\ Indeed, let $y \in St(\bigcup _{x\in C}U_x,\mathcal{V})$, hence, there exists $V\in \mathcal{V}$ so that $y \in V$ and $V\cap \bigcup _{x\in C}U_x \neq \emptyset$. Let $U \in \mathcal{U}$ so that $V \subseteq U$. Then, $U \in \mathcal{W}$. Thus, $y \in St(y_U,\mathcal{U}) \subseteq St(\{y_W:W \in \mathcal{W}\},\mathcal{U})$.\\
Since $\mathcal{V}$ is a refinement o  $\mathcal{U}$, and $St(\bigcup _{x\in C}U_x,\mathcal{V}) \subseteq St(\{y_W:W \in \mathcal{W}\},\mathcal{U})$, by Lemma \ref{containmentsOfStars}, $X = St^n(\bigcup _{x\in C}U_x,\mathcal{V}) \subseteq St^n(\{y_W:W \in \mathcal{W}\},\mathcal{U})$. That is, $X$ is $n$-strongly star  Lindel\"of.
\end{proof}

Observe that if we have a paraLindel\"of space $X$ that is either $n$-star Lindel\"of or $n$-strongly star Lindel\"of for some $n \in \omega$, we can use Proposition \ref{ParalindelofnStarLindelofIsnstronglystarLindelof} and Proposition \ref{ML+(n+1)SSLimplies(n)SL}, as many times as needed to get that $X$ is star-Lindel\"of. Since paraLindel\"of, star-Lindel\"of spaces are Lindel\"of (check Theorem 2.6 in \cite{CGS} or Theorem 2.24 in \cite{YKS2}), we have the following:

\begin{corollary}
\label{paraLIndelofColapsesnStarLindelof}
In the class of paraLindel\"of spaces, for each $n \in \omega$, the following properties are equivalent:
\begin{enumerate}
    \item[(i)] Lindel\"of;
    \item[(ii)] strongly $n$-star Lindel\"of;
    \item[(iii)] $n$-star Lindel\"of.
\end{enumerate}
\end{corollary}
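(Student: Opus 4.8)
The plan is to prove the three implications (iii)$\Rightarrow$(ii)$\Rightarrow$(iii)' and then close the loop back to (i), using the two preceding results as the engine. The chain (i)$\Rightarrow$(iii) and (i)$\Rightarrow$(ii) is immediate: a Lindel\"of space is trivially $n$-star Lindel\"of and strongly $n$-star Lindel\"of for every $n$, since a countable subcover $\mathcal{V}$ of an open cover $\mathcal{U}$ already satisfies $St^n(\bigcup\mathcal{V},\mathcal{U})=X$, and picking one point from each member of $\mathcal{V}$ gives a countable $C$ with $St^n(C,\mathcal{U})=X$. The content is therefore the two implications (ii)$\Rightarrow$(i) and (iii)$\Rightarrow$(i), i.e.\ that in the paraLindel\"of class both strong $n$-star Lindel\"ofness and $n$-star Lindel\"ofness collapse down to Lindel\"ofness.

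First I would handle the case $n=0$ or $n=1$ as the base of an induction, or more cleanly, just run the descent directly. Suppose $X$ is paraLindel\"of and $n$-star Lindel\"of. By Proposition \ref{ParalindelofnStarLindelofIsnstronglystarLindelof}, $X$ is strongly $n$-star Lindel\"of. A paraLindel\"of space is metaLindel\"of (a locally countable open refinement is in particular point-countable), so Proposition \ref{ML+(n+1)SSLimplies(n)SL} applies: a strongly $n$-star Lindel\"of metaLindel\"of space is $(n-1)$-star Lindel\"of. Thus from ``paraLindel\"of $+$ $n$-star Lindel\"of'' we descend to ``paraLindel\"of $+$ $(n-1)$-star Lindel\"of'' (paraLindel\"ofness is unchanged throughout). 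Iterating this descent exactly $n$ times lands us at ``paraLindel\"of $+$ $0$-star Lindel\"of''; since $0$-star Lindel\"of means, with the standard convention $St^0(A,\mathcal{U})=A$, that every open cover has a countable subcover, this is precisely Lindel\"of. (If one instead takes $1$-star Lindel\"of as the base, i.e.\ star-Lindel\"of, the cited fact that paraLindel\"of star-Lindel\"of spaces are Lindel\"of — Theorem 2.6 of \cite{CGS} or Theorem 2.24 of \cite{YKS2} — finishes the descent one step earlier.) This gives (iii)$\Rightarrow$(i), and the symmetric argument starting from strongly $n$-star Lindel\"of — first apply Proposition \ref{ML+(n+1)SSLimplies(n)SL} to get $(n-1)$-star Lindel\"of, then iterate — gives (ii)$\Rightarrow$(i).

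The main thing to be careful about, rather than any deep obstacle, is bookkeeping the indices and the base case of the iteration: making sure the convention for $St^0$ (and hence what ``$0$-star Lindel\"of'' means) is the one that makes the descent terminate at Lindel\"of, and confirming that Proposition \ref{ParalindelofnStarLindelofIsnstronglystarLindelof} and Proposition \ref{ML+(n+1)SSLimplies(n)SL} are being invoked with compatible index conventions — the former says paraLindel\"of $n$-star Lindel\"of $\Rightarrow$ $n$-strongly star Lindel\"of, the latter says metaLindel\"of strongly $(n{+}1)$-star Lindel\"of $\Rightarrow$ $n$-star Lindel\"of, so a single ``round'' of the two drops the index by one. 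Since these are finitely many applications ($n$ of them), no set-theoretic hypothesis or limit step is needed. The paragraph preceding the corollary in the excerpt already spells out this ``use them as many times as needed'' strategy, so the proof is essentially a one-line appeal to that discussion; I would just make the induction on $n$ explicit and pin down the $n=0$ (or $n=1$) base case.
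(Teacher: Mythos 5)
Your argument is essentially the paper's own proof: the authors likewise alternate Proposition \ref{ParalindelofnStarLindelofIsnstronglystarLindelof} and Proposition \ref{ML+(n+1)SSLimplies(n)SL} (noting paraLindel\"of $\Rightarrow$ metaLindel\"of) to descend to star-Lindel\"of and then invoke the fact that paraLindel\"of star-Lindel\"of spaces are Lindel\"of (\cite{CGS}, \cite{YKS2}), which is exactly your parenthetical alternative ending. Your primary ending, descending one extra step to ``$0$-star Lindel\"of'' via the convention $St^0(A,\mathcal{U})=A$, is only a cosmetic variant, and the trivial implications from Lindel\"of are handled the same way.
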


In Proposition 53 of \cite{M}, Matveev shows that every metaLindel\"{o}f strongly $2$-star Lindel\"{o}f is absolutely strongly $2$-star Lindel\"{o}f (he calls star Lindel\"{o}f what we call strongly star Lindel\"{o}f). Thus, we can ask the general case:

\begin{question}
Is it true that every metaLindel\"of strongly $n$-star Lindel\"of space is absolutely strongly $n$-star Lindel\"of?
\end{question}

Additionally, since both properties, absolutely strongly $n+1$-star Lindel\"of and $n$-star Lindel\"{o}f are stronger than strongly $n+1$-star Lindel\"of, it is worth to investigate the following:

\begin{question}
What is the relationship between the properties absolutely strongly $n+1$-star Lindel\"of and $n$-star Lindel\"{o}f?
\end{question}

We introduce similar definitions for the star versions of the Menger property:\\
\begin{definition}\label{nstarMenger}
\begin{enumerate}
    \item A space $X$ is called $n$-star-Menger if for every sequence of open covers  $\{ \mathcal{U}_n : n \in \omega \}$ there exists a sequence  $\{ \mathcal{V}_n : n \in \omega \}$ so that for each $n \in \omega$ $\mathcal{V}_n \in [\mathcal{U}_n]^{< \omega}$ and $\{St^n(\bigcup \mathcal{V}_n,\mathcal{U}_n):n \in \omega\}$ is an open cover of $X$.
    \item  A space $X$ is called strongly $n$-star-Menger if for every sequence of open covers  $\{ \mathcal{U}_n : n \in \omega \}$ there exists a sequence  $\{ F_n : n \in \omega \}$ so that for each $n \in \omega$ $F_n \in [X]^{< \omega}$ and $\{St^n(F_n,\mathcal{U}_n):n \in \omega\}$ is an open cover of $X$.
\end{enumerate}

Observe that $1$-star-Menger and strongly $1$-star-Menger spaces are precisely star-Menger and strongly star-Menger spaces, respectively.
\end{definition}

It follows immediately from Proposition \ref{ML+(n+1)SSLimplies(n)SL} that if $X$ is metaLindel\"{o}f and strongly $n+1$-star Menger then $X$ is $n$-star Lindel\"{o}f. In addition, the next proposition is the Menger version of Theorem 3.1.1(3) in \cite{DRRT}.

\begin{proposition}
If $X$ is $n$-star-Menger, then $X$ is strongly $n+1$-star-Menger.
\end{proposition}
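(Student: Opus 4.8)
The plan is to mimic the proof of Theorem 3.1.1(3) in \cite{DRRT} (the Lindel\"of analogue, which states that every $n$-star Lindel\"of space is strongly $n+1$-star Lindel\"of): given a finite subfamily of a cover that witnesses the $n$-star-Menger selection, replace each of its members by a single point lying inside it. This trades one layer of ``$\bigcup\mathcal{V}$'' for one extra star operation, turning an $n$-star selection into a strongly $(n+1)$-star selection.

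Concretely, I would fix a sequence $\{\mathcal{U}_n:n\in\omega\}$ of open covers of $X$. Applying the $n$-star-Menger property, I obtain for each $n\in\omega$ a finite subfamily $\mathcal{V}_n\subseteq\mathcal{U}_n$ such that $\{St^n(\bigcup\mathcal{V}_n,\mathcal{U}_n):n\in\omega\}$ is an open cover of $X$. For each $n$ and each $V\in\mathcal{V}_n$ I choose a point $x_V\in V$, and set $F_n=\{x_V:V\in\mathcal{V}_n\}$ (in the degenerate case $\mathcal{V}_n=\emptyset$, let $F_n$ be any singleton). Then each $F_n$ is a finite subset of $X$, so $\{F_n:n\in\omega\}$ is a legitimate candidate for the strongly $(n+1)$-star-Menger selection.

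The key observation is that $\bigcup\mathcal{V}_n\subseteq St(F_n,\mathcal{U}_n)$ for every $n$: indeed, each $V\in\mathcal{V}_n$ is a member of $\mathcal{U}_n$ meeting $F_n$ (at the point $x_V$), hence $V\subseteq St(F_n,\mathcal{U}_n)$ by the definition of the star. Next I invoke the monotonicity of the iterated star operator, which is exactly Lemma \ref{containmentsOfStars} with the refinement $\mathcal{V}:=\mathcal{U}_n$ of $\mathcal{U}_n$: from $\bigcup\mathcal{V}_n\subseteq St(F_n,\mathcal{U}_n)$ we get
\[
St^n\big(\textstyle\bigcup\mathcal{V}_n,\mathcal{U}_n\big)\subseteq St^n\big(St(F_n,\mathcal{U}_n),\mathcal{U}_n\big)=St^{n+1}(F_n,\mathcal{U}_n).
\]
Since $\{St^n(\bigcup\mathcal{V}_n,\mathcal{U}_n):n\in\omega\}$ already covers $X$, the collection $\{St^{n+1}(F_n,\mathcal{U}_n):n\in\omega\}$ covers $X$ as well, which is precisely the statement that $X$ is strongly $n+1$-star-Menger.

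There is essentially no serious obstacle in this argument; the only points requiring a little care are the monotonicity of the $St^{n}$ operator (supplied by Lemma \ref{containmentsOfStars}) and the degenerate case $\mathcal{V}_n=\emptyset$, which is harmless since then $St^n(\bigcup\mathcal{V}_n,\mathcal{U}_n)=\emptyset$ and that index contributes nothing to the cover.
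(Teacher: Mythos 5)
Your proof is correct and follows essentially the same route as the paper's: select a point in each member of the finite subfamily $\mathcal{V}_n$, note that $\bigcup\mathcal{V}_n\subseteq St(F_n,\mathcal{U}_n)$, and conclude via monotonicity of the iterated star that $St^n(\bigcup\mathcal{V}_n,\mathcal{U}_n)\subseteq St^{n+1}(F_n,\mathcal{U}_n)$. Your extra remarks (explicitly invoking Lemma \ref{containmentsOfStars} and handling $\mathcal{V}_n=\emptyset$) are harmless refinements of the same argument.
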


\begin{proof}
Assume $X$ is $n$-star-Menger. Let $\{\mathcal{U}_n : n \in \omega \}$ be a sequence of open covers of $X$. For each $n \in \omega$, let $\mathcal{V}_n \in [\mathcal{U}_n]^{< \omega}$ so that $\{St^n(\bigcup \mathcal{V}_n, \mathcal{U}_n ): n \in \omega\}$ is an open cover of $X$. Put $\mathcal{V}_n= \{V_n^i:i\in I_n\}$. Take for each $n \in \omega$ and $i \in I_n$, $x_n^i \in V_n^i$ and define for each $n \in \omega$, $F_n = \{x_n^i:i \in I_n\}$. Observe that for each $n \in \omega$, $\bigcup \mathcal{V}_n \subseteq St(F_n,\mathcal{U}_n)$. Hence,
$$St^n(\bigcup \mathcal{V}_n,\mathcal{U}_n) \subseteq St^n(St(F_n,\mathcal{U}_n),\mathcal{U}_n) = St^{n+1}(F_n,\mathcal{U}_n).$$
Thus, $\{St^{n+1}(F_n,\mathcal{U}_n):n\in \omega\}$ is an open cover of $X$. That is, $X$ is strongly $n+1$-star-Menger.
\end{proof}

In the class of metacompact spaces the properties $n$-star-Menger and strongly $n+1$-star-Menger coincide:

\begin{proposition}
If $X$ is strongly $n+1$-star-Menger and metacompact, then $X$ is $n$-star-Menger.
\end{proposition}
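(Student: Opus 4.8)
The plan is to mimic the proof of Proposition \ref{ML+(n+1)SSLimplies(n)SL}, but in the dynamic (sequence-of-covers) setting, using metacompactness to replace a single point-star by a finite subfamily of the cover. First I would fix a sequence $\{\mathcal{U}_n:n\in\omega\}$ of open covers of $X$. By metacompactness, each $\mathcal{U}_n$ has a point-finite open refinement; since star operations only get larger under refinement (Lemma \ref{containmentsOfStars}), it suffices to prove the conclusion for the refinements, so I would assume without loss of generality that each $\mathcal{U}_n$ is itself point-finite. Then apply strong $n+1$-star-Mengerness to obtain finite sets $F_n\in[X]^{<\omega}$ with $\{St^{n+1}(F_n,\mathcal{U}_n):n\in\omega\}$ an open cover of $X$.

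The key step is the conversion: for each $n$, because $\mathcal{U}_n$ is point-finite and $F_n$ is finite, the family $\mathcal{V}_n=\{U\in\mathcal{U}_n : U\cap F_n\neq\emptyset\}$ is a finite subfamily of $\mathcal{U}_n$, and clearly $St(F_n,\mathcal{U}_n)=\bigcup\mathcal{V}_n$. Hence
\[
St^{n+1}(F_n,\mathcal{U}_n)=St^n\big(St(F_n,\mathcal{U}_n),\mathcal{U}_n\big)=St^n\big(\textstyle\bigcup\mathcal{V}_n,\mathcal{U}_n\big).
\]
Therefore $\{St^n(\bigcup\mathcal{V}_n,\mathcal{U}_n):n\in\omega\}$ is an open cover of $X$, which is exactly the statement that $X$ is $n$-star-Menger.

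The only thing requiring a little care is the reduction to point-finite covers: after passing to a point-finite refinement $\mathcal{U}_n'$ of $\mathcal{U}_n$ and running the argument there, I would obtain $\mathcal{V}_n'\in[\mathcal{U}_n']^{<\omega}$ with $\{St^n(\bigcup\mathcal{V}_n',\mathcal{U}_n'):n\in\omega\}$ covering $X$; then for each $U'\in\mathcal{V}_n'$ choose $U\in\mathcal{U}_n$ with $U'\subseteq U$ and let $\mathcal{V}_n$ be the resulting finite subfamily of $\mathcal{U}_n$, so that $\bigcup\mathcal{V}_n'\subseteq\bigcup\mathcal{V}_n$ and, since $\mathcal{U}_n'$ refines $\mathcal{U}_n$, Lemma \ref{containmentsOfStars} gives $St^n(\bigcup\mathcal{V}_n',\mathcal{U}_n')\subseteq St^n(\bigcup\mathcal{V}_n,\mathcal{U}_n)$. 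Thus the cover survives the transfer back to $\mathcal{U}_n$. I do not expect any real obstacle here; the point-finiteness-plus-finite-set observation is the whole content, and it is the exact analogue of the point-countable argument in Proposition \ref{ML+(n+1)SSLimplies(n)SL}.
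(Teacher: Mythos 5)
Your proof is correct and follows essentially the same route as the paper's: reduce to point-finite covers via metacompactness, apply strong $n+1$-star-Mengerness to get finite sets $F_n$, and use point-finiteness to rewrite $St(F_n,\mathcal{U}_n)$ as the union of the finite subfamily $\{U\in\mathcal{U}_n:U\cap F_n\neq\emptyset\}$, so that $St^{n+1}(F_n,\mathcal{U}_n)=St^n(\bigcup\mathcal{V}_n,\mathcal{U}_n)$. Your careful justification of the ``without loss of generality'' transfer back from the refinement via Lemma \ref{containmentsOfStars} is a detail the paper leaves implicit, but it is the same argument.
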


\begin{proof}
 Let $\{ \mathcal{U}_n : n \in \omega \}$ be a sequence of open covers of $X$. Since $X$ is metacompact we can assume that for each $n\in \omega$, $\mathcal{U}_n$ is point-finite.\\
 Hence, for each $n\in \omega$ there exists $F_n \in [X]^{< \omega}$ so that
 $$\big\{St^{n+1}(F_n,\mathcal{U}_n): n\in \omega\big\}$$
 is an open cover of $X$. For each $n \in \omega$, let $W_n = \{U \in \mathcal{U}_n: F_n \cap U \neq \emptyset\}$. Note $|W_n| < \omega$ and $St(F_n,\mathcal{U}_n) = \bigcup W_n$. Then for each $n \in \omega$
  $St^n\big(St(F_n,\mathcal{U}_n),\mathcal{U}_n\big) = St^n(\bigcup W_n, \mathcal{U}_n)$. Thus, $\{St^n(\bigcup W_n,\mathcal{U}_n): n\in \omega\}$ is an open cover of $X$. That is, $X$ is $n$-star-Menger.
\end{proof}

\begin{corollary}
If $X$ is strongly $2$-star-Menger and paracompact, then $X$ is Menger.
\end{corollary}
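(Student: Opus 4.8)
The plan is to chain together the two results that immediately precede this corollary, using the fact that paracompactness is inherited by the relevant weakenings. First I would observe that a paracompact space is in particular metacompact, so the hypothesis ``$X$ is strongly $2$-star-Menger and paracompact'' lets us apply the previous Proposition (with $n=1$) to conclude that $X$ is $1$-star-Menger, i.e.\ star-Menger. That is the first reduction.

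Next I would invoke the previous Proposition one more time, but now I need the $n=0$ analogue: a strongly $1$-star-Menger metacompact space should be ``$0$-star-Menger,'' which unwinding Definition \ref{nstarMenger} simply means Menger in the ordinary sense (the star operator $St^0$ is the identity). So applying the metacompact-plus-strongly-$(n+1)$-star-Menger implies $n$-star-Menger proposition with $n=0$ to the star-Menger (= strongly $1$-star-Menger after the first step, but we actually already have $X$ is $1$-star-Menger, hence a fortiori it is strongly $2$-star-Menger; more directly, $1$-star-Menger plus metacompact gives $0$-star-Menger). Concretely: $X$ is $1$-star-Menger, and $1$-star-Menger implies strongly $1$-star-Menger is the wrong direction, so instead I use that $1$-star-Menger directly plus metacompactness yields Menger by the same argument as the preceding proposition with $n=0$, replacing point-finite covers and collapsing the single star.

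A cleaner route, which I would actually write, is: since $X$ is paracompact it is metacompact, so strongly $2$-star-Menger plus metacompact gives (Proposition with $n=1$) that $X$ is star-Menger; and star-Menger paracompact spaces are Menger, which is one of the classical equivalences recalled in the introduction (citing \cite{K}, and noting it holds even for paraLindel\"of spaces by \cite{CGS}). So the proof is just the two-line chain: paracompact $\Rightarrow$ metacompact; apply the $n=1$ case of the previous proposition to get star-Menger; then apply the known fact that $SM$, $SSM$, $M$ coincide for paracompact Hausdorff spaces.

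The only mild obstacle is bookkeeping about what ``$0$-star-Menger'' means and making sure the edge case of Definition \ref{nstarMenger} degenerates correctly to the ordinary Menger property; using the already-cited classical equivalence for paracompact spaces sidesteps this entirely, so I would phrase the proof through that equivalence rather than through an $n=0$ instance of the proposition. I expect no real difficulty beyond writing the citation carefully.

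\begin{proof}
Since $X$ is paracompact, it is metacompact. As $X$ is strongly $2$-star-Menger, the previous proposition (with $n=1$) gives that $X$ is $1$-star-Menger, that is, star-Menger. Finally, for paracompact (Hausdorff) spaces the properties star-Menger, strongly star-Menger and Menger coincide (see \cite{K}; this holds even for paraLindel\"of spaces by \cite{CGS}), so $X$ is Menger.
\end{proof}
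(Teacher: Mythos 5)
Your final proof is correct and follows essentially the same route as the paper: paracompact implies metacompact, apply the preceding proposition with $n=1$ to get star-Menger, then invoke the known fact that paracompact star-Menger spaces are Menger (see \cite{K}). The exploratory detour about an $n=0$ instance was wisely abandoned; the written two-step argument matches the paper's proof.
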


\begin{proof}
Since paracompact implies metacompact, if $X$ is strongly $2$-star-Menger and paracompact, using the previous result, $X$ is star-Menger. In addition, paracompact star-Menger spaces are Menger (see \cite{K}).
\end{proof}

Actually something stronger holds. In \cite{CGS}, the authors showed that for paraLindel\"of spaces, the three Menger-type properties, strongly star-Menger, star-Menger and Menger, are equivalent. The following result shows that this fact is still true when taking iterations of those properties.

\begin{theorem}
\label{InParaLindelofSpacesnStarMengerimplieMenger}
In the class of paraLindel\"of spaces, given any $n \in \omega $ the properties $n$-star-Menger, strongly $n$-star-Menger and Menger, are equivalent.
\end{theorem}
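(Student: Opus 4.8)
The strategy is to show that in the class of paraLindel\"of spaces, for every $n \in \omega$ we have the cycle of implications
\[
\text{Menger} \;\Longrightarrow\; \text{strongly } n\text{-star-Menger} \;\Longrightarrow\; n\text{-star-Menger} \;\Longrightarrow\; \text{Menger},
\]
where only the middle classes depend on $n$. The first implication is trivial once we know that Menger implies strongly star-Menger (hence strongly $n$-star-Menger for every $n$, since $St(F_n,\mathcal U_n) \subseteq St^n(F_n,\mathcal U_n)$): indeed a Menger space is strongly star-Menger in full generality, with no assumptions. The second implication is exactly the ``paraLindel\"of version'' of the metacompact result \textbf{Proposition} (strongly $n+1$-star-Menger $+$ metacompact $\Rightarrow$ $n$-star-Menger) proved just above, only now we must reduce the star-count from $n$ all the way down to $1$, not just by one step. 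The third implication is the known fact, cited from \cite{K} (and its $\mathfrak b,\mathfrak d$-free restatement in \cite{CGS}), that paraLindel\"of star-Menger spaces are Menger.

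\textbf{The main work: strongly $n$-star-Menger $+$ paraLindel\"of $\Rightarrow$ $n$-star-Menger $\Rightarrow$ star-Menger.} First I would reduce ``strongly $n$-star-Menger'' to ``$(n-1)$-star-Menger'' in one step, and then iterate. Fix a sequence $\{\mathcal U_n : n\in\omega\}$ of open covers; using paraLindel\"ofness, replace each $\mathcal U_n$ by a locally countable open refinement $\mathcal V_n$ (this is legitimate since star selection properties pass to refinements in the appropriate direction, cf.\ Lemma \ref{containmentsOfStars}). Apply strongly $n$-star-Menger to $\{\mathcal V_n\}$ to get finite $F_n \in [X]^{<\omega}$ with $\{St^n(F_n,\mathcal V_n):n\in\omega\}$ an open cover of $X$. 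Now $St(F_n,\mathcal V_n) = \bigcup\{V \in \mathcal V_n : V \cap F_n \neq\emptyset\}$, and by \emph{local countability} of $\mathcal V_n$ this is a union of \emph{countably} many members of $\mathcal V_n$ — say $\mathcal W_n \in [\mathcal V_n]^{\leq\omega}$ with $\bigcup\mathcal W_n = St(F_n,\mathcal V_n)$ — so that $St^n(F_n,\mathcal V_n) = St^{n-1}(\bigcup\mathcal W_n, \mathcal V_n)$ and hence $\{St^{n-1}(\bigcup\mathcal W_n,\mathcal V_n):n\in\omega\}$ is an open cover, where each $\mathcal W_n$ is only \emph{countable}, not finite. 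This is the crux of the difficulty: the naive analogue of the metacompact argument produces countable, not finite, subcollections, so one cannot immediately conclude $(n-1)$-star-Mengerness. The fix is to absorb this countability using a $\mathfrak b$-style argument in the spirit of Section \ref{sectionsmallunions} — but more cheaply, one can instead iterate the \emph{countable} reduction down to $1$ star first: repeating the local-countability trick $n-1$ more times collapses $St^{n-1}(\bigcup\mathcal W_n,\mathcal V_n)$ down to $St(\bigcup\mathcal W_n,\mathcal V_n)$ with $\mathcal W_n$ still countable, so $X$ satisfies the selection principle $S^*_{fin}$ for sequences of open covers where the chosen subfamilies are allowed to be \emph{countable}. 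Now invoke paraLindel\"ofness once more: a paraLindel\"of star-Lindel\"of space is Lindel\"of (Theorem 2.6 of \cite{CGS}, used already in Corollary \ref{paraLIndelofColapsesnStarLindelof}), so each relevant $\mathcal V_n$ may be taken countable from the start, whence ``countable subfamily'' and ``finite subfamily'' differ only by a finite-versus-countable enumeration which, combined with Lemma \ref{largecoverss}, lets us pass to the genuine star-Menger selection (finite subfamilies, large covers). Concretely: paraLindel\"of $+$ strongly $n$-star-Menger forces Lindel\"of, then the collapse of iterated stars over a countable cover is routine, giving star-Menger, and finally paraLindel\"of $+$ star-Menger $=$ Menger by \cite{K}.

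\textbf{What I expect to be the main obstacle.} The delicate point is the finite-vs-countable gap: the metacompact proof of the $n \leftrightarrow n+1$ equivalence above exploits point-finiteness to keep $\mathcal W_n = \{U \in \mathcal U_n : F_n\cap U \neq\emptyset\}$ \emph{finite}, whereas paraLindel\"ofness only gives local countability, hence $\mathcal W_n$ countable. The right way around this is not to fight it but to first extract from ``paraLindel\"of $+$ strongly $n$-star-Menger'' (equivalently $+$ strongly $n$-star-Lindel\"of, which it implies) the conclusion that $X$ is Lindel\"of — this is where Corollary \ref{paraLIndelofColapsesnStarLindelof} and the paraLindel\"of star-Lindel\"of $\Rightarrow$ Lindel\"of theorem do the heavy lifting — and only afterwards run the iterated-star collapse over countable covers, where finiteness is automatic. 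I would therefore organize the proof as: (1) Menger $\Rightarrow$ strongly $n$-star-Menger (trivial, general); (2) strongly $n$-star-Menger $+$ paraLindel\"of $\Rightarrow$ Lindel\"of, via the $n$-star-Lindel\"of theory of the previous section; (3) Lindel\"of $+$ strongly $n$-star-Menger $\Rightarrow$ $n$-star-Menger $\Rightarrow \cdots \Rightarrow$ star-Menger, collapsing one star at a time using countability of the covers and Lemma \ref{largecoverss}; (4) paraLindel\"of $+$ star-Menger $\Rightarrow$ Menger, citing \cite{K},\cite{CGS}. Steps (1) and (4) are citations; step (2) is the conceptual heart; step (3) is bookkeeping.
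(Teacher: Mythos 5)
Your overall architecture starts the same way as the paper's proof: use Corollary \ref{paraLIndelofColapsesnStarLindelof} to conclude that a paraLindel\"of $n$-star-Menger (or strongly $n$-star-Menger) space is Lindel\"of, note that the remaining implications Menger $\Rightarrow$ strongly $n$-star-Menger $\Rightarrow$ $n$-star-Menger are trivial, and reduce everything to showing that the iterated-star selection collapses to Menger. But your step (3), which you dismiss as ``bookkeeping,'' is exactly where the real content lies, and your sketch does not close it. Once the covers $\mathcal{U}_k$ are countable, the statement you extract --- ``there are \emph{countable} subfamilies $\mathcal{W}_k\subseteq\mathcal{U}_k$ with $\{St(\bigcup\mathcal{W}_k,\mathcal{U}_k):k\in\omega\}$ (or even $\{\bigcup\mathcal{W}_k:k\in\omega\}$) a cover'' --- is vacuous: taking $\mathcal{W}_k=\mathcal{U}_k$ already satisfies it. The entire content of the (star-)Menger property is the \emph{finiteness} of the selections, and neither Lemma \ref{largecoverss} (which converts Menger-type selections into large covers, but never converts countable selections into finite ones) nor the phrase ``countable subfamily and finite subfamily differ only by a finite-versus-countable enumeration'' supplies a mechanism for recovering finiteness. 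A secondary problem is your intermediate ``local-countability trick'': local countability bounds the number of members of $\mathcal{V}_n$ meeting the \emph{finite} set $F_n$, but an open set such as $\bigcup\mathcal{W}_n$ can meet uncountably many members of a locally countable family, so the iteration ``$n-1$ more times'' of that trick is unjustified (this becomes moot in your reorganized version, but the gap above remains).

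What the paper does at this point, and what is missing from your proposal, is to exploit paracompactness rather than mere countability of covers: a regular Lindel\"of space is paracompact, hence every open cover admits an open star refinement. Given the covers $\{\mathcal{U}_n\}$, one builds iterated star refinements $\mathcal{B}_n^m \prec^* \mathcal{B}_n^{m-1} \prec^* \cdots \prec^* \mathcal{B}_n^0 \prec^* \mathcal{U}_n$ and shows (using Lemma \ref{containmentsOfStars}) that for every $W\in\mathcal{B}_n^m$ the $m$-fold star $St^m(W,\mathcal{B}_n^m)$ is contained in a \emph{single} member $U_W\in\mathcal{U}_n$. Applying $m$-star-Mengerness to the sequence $\{\mathcal{B}_n^m\}$ then yields finite $\mathcal{W}_n\subseteq\mathcal{B}_n^m$ with $\{St^m(\bigcup\mathcal{W}_n,\mathcal{B}_n^m):n\in\omega\}$ a cover, and replacing each $W\in\mathcal{W}_n$ by $U_W$ gives finite $\mathcal{V}_n\subseteq\mathcal{U}_n$ with $\{\bigcup\mathcal{V}_n:n\in\omega\}$ a cover of $X$, i.e.\ Menger --- the finiteness survives because the star refinements were chosen \emph{before} the selection was made. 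Some device of this kind (pre-refining so that iterated stars fall inside single members of the original cover) is indispensable; without it your argument proves nothing beyond Lindel\"ofness.
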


\begin{proof}
Fix $m \in \omega$ and let $X$ be a paraLindel\"of, $m$-star-Menger space. By Corollary \ref{paraLIndelofColapsesnStarLindelof}, $X$ is Lindel\"of and, in particular, paracompact.\\
Let us recall that a cover $\mathcal{B}$ is said to be a star refinement of a cover $\mathcal{U}$ ($\mathcal{B} \prec^* \mathcal{U}$), if for each $B \in \mathcal{B}$, there is some $U \in \mathcal{U}$ so that $St(B,\mathcal{B}) \subseteq U$. A space is paracompact if every open cover has an open star refinement (check, for instance, Theorem 5.1.12 in \cite{E}).\\
Now let $\{\mathcal{U}_n:n \in \omega\}$ be any sequence of open covers of $X$. For each $i\leq m$ define $\mathcal{B}_n^i$ open cover of $X$ so that $\mathcal{B}_n^m \prec^* \mathcal{B}_n^{m-1} \prec^* \cdots   \prec^* \mathcal{B}_n^0 \prec^* \mathcal{U}_n$.\\
Claim: For each $n\in \omega$ and for each $W\in \mathcal{B}_n^m$, there is $U_W \in \mathcal{U}_n$ so that $St^m(W,\mathcal{B}_n^m)\subseteq U_W$. Indeed, fix $n \in \omega$ and $W_m\in{\mathcal{B}_n^m}$, since $\mathcal{B}_n^m \prec^* \mathcal{B}_n^{m-1}$, there is $W_{m-1}\in \mathcal{B}_n^{m-1}$ so that $St(W_m,\mathcal{B}_n^m) \subseteq W_{m-1}$. By Lemma \ref{containmentsOfStars},  $$St^2(W_m,\mathcal{B}_n^m)= St(St(W_m,\mathcal{B}_n^m),\mathcal{B}_n^m) \subseteq St(W_{m-1},\mathcal{B}_n^{m-1}).$$
Now, since $\mathcal{B}_n^{m-1} \prec^* \mathcal{B}_n^{m-2}$, there is $W_{m-2}\in \mathcal{B}_n^{m-2}$ so that $St(W_{m-1},\mathcal{B}_n^{m-1}) \subseteq W_{m-2}$. Then, $St^2(W_m,\mathcal{B}_n^m) \subseteq W_{m-2}$. It is possible to repeat this process $m-2$ more times to get $U_{W_m} \in \mathcal{U}_n$ with $St^m(W_m,\mathcal{B}_n^m)\subseteq U_{W_m}$.\\
Since $X$ is $m$-star-Menger, let $\mathcal{W}_n \in[\mathcal{B}_n^m]^{<\omega}$ so that $\{St^m(\bigcup\mathcal{W}_n,\mathcal{B}_n^m):n\in \omega\}$ is an open cover of $X$.\\
For each $n\in \omega$ and each $W \in \mathcal{W}_n$ fix $U_W \in \mathcal{U}_n$ such that $St^m(W,\mathcal{B}_n^m)\subseteq U_W$. For all $n \in \omega$, let $\mathcal{V}_n = \{U_W: W \in \mathcal{W}_n\}$. Each $\mathcal{V}_n \in[\mathcal{U}_n]^{<\omega}$ and $\{\bigcup \mathcal{V}_n:n\in\omega\}$ is an open cover of $X$. Thus, $X$ is Menger.
\end{proof}

\section{$\Psi$-spaces}

A natural question in this context is whether every $2$-star-Menger is star-Menger. This is not the case, in fact, Tree (\cite{Tree}) built a 2-star compact (hence $2$-star-Menger), space which is not strongly 2-star Lindel\"of (in particular, not star Lindel\"of and therefore, not star-Menger).
Since, every star-Menger space is both 2-star-Menger and star-Lindel\"of, it is worth asking whether the converse holds true, i.e., Is it true that every 2-star-Menger, star-Lindel\"of space is star-Menger?. The answer is no, at least consistently (see Example \ref{2SSM, SSL,not SM} below). For this, we use a \emph{Luzin family} $\mathcal{A}$. That is, $\mathcal{A} = \{a_\alpha: \alpha < \omega_1\}$ is an almost disjoint family with the property that for each $\alpha < \omega_1$ and each $n \in \omega$ the set $\{\beta < \alpha: a_\beta \cap a_\alpha \subseteq n\}$ is finite. First of all, we introduce the analogous definitions to Definition \ref{nstarMenger} for the Rothberger property:\\

\begin{definition}\label{nstarRothberger}
\begin{enumerate}
    \item A space $X$ is called $n$-star-Rothberger if for every sequence of open covers  $\{ \mathcal{U}_n : n \in \omega \}$ there exists a sequence  $\{ U_n : n \in \omega \}$ so that for each $n \in \omega$, $U_n \in \mathcal{U}_n$ and $\{St^n(U_n,\mathcal{U}_n):n \in \omega\}$ is an open cover of $X$.
    \item  A space $X$ is called strongly $n$-star-Rothberger if for every sequence of open covers  $\{ \mathcal{U}_n : n \in \omega \}$ there exists a sequence  $\{ x_n : n \in \omega \}$ so that for each $n \in \omega$, $x_n \in X$ and $\{St^n(x_n,\mathcal{U}_n):n \in \omega\}$ is an open cover of $X$.
\end{enumerate}

Observe that $1$-star-Rothberger and strongly $1$-star-Rothberger spaces are precisely star-Rothberger and strongly star-Rothberger spaces, respectively.
\end{definition}

It is surprising that, regardless of the size of the continuum, $\Psi$-spaces induced by a Luzin family, are always strongly 2-star-Rothberger.

\begin{proposition}
\label{Luzinfam}
If $\mathcal{A}$ is Luzin, then $\Psi(\mathcal{A})$ is strongly 2-star Rothberger.
\end{proposition}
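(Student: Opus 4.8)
The plan is to peel off the isolated points together with a countable part of $\mathcal{A}$ for free, and to spend the remaining covers covering $\mathcal{A}$ itself by double stars. Fix a partition $\omega=E\sqcup F$ into two infinite sets. The covers $\{\mathcal{U}_k:k\in F\}$ are reserved for the end: once the covers indexed by $E$ have handled $\mathcal{A}$ up to a \emph{countable} set $\rho\subseteq\mathcal{A}$, the set $\omega\cup\rho$ is countable, so enumerating it as $\{z_i:i\in\omega\}$ and $F$ as $\{f_0<f_1<\cdots\}$ and setting $x_{f_i}=z_i$ gives $z_i\in St(z_i,\mathcal{U}_{f_i})\subseteq St^2(x_{f_i},\mathcal{U}_{f_i})$, whence $\{St^2(x_{f_i},\mathcal{U}_{f_i}):i\in\omega\}$ covers $\omega\cup\rho$. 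So everything reduces to: using $\{\mathcal{U}_k:k\in E\}$, choose $x_k\in\Psi(\mathcal{A})$ so that $\bigcup_{k\in E}St^2(x_k,\mathcal{U}_k)$ contains all but countably many members of $\mathcal{A}$.

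I would then record three elementary observations. (a) Since $\omega$ is countable, each $\mathcal{U}_k$ has a countable subfamily covering $\omega$; hence for every uncountable $\mathcal{B}\subseteq\mathcal{A}$ there is $U\in\mathcal{U}_k$ with $\{b\in\mathcal{B}:|b\cap U|=\omega\}$ uncountable (each $b\in\mathcal{B}$ is infinite, so meets some member of the subfamily infinitely, and then pigeonhole applies to $\mathcal{B}$). (b) If $j\in U\cap\omega$ with $U\in\mathcal{U}_k$, then $St^2(j,\mathcal{U}_k)\supseteq St(U,\mathcal{U}_k)\supseteq\{a\in\mathcal{A}:|a\cap U|=\omega\}$, because $U\subseteq St(j,\mathcal{U}_k)$ and because any basic neighbourhood $\{a\}\cup(a\setminus G)$ of such an $a$ lying in $\mathcal{U}_k$ meets $U$ in a point of $a\setminus G$. (c) By Lemma \ref{containmentsOfStars} it suffices to work with the canonical refinement $\mathcal{V}_k=\{\,\{a_\alpha\}\cup(a_\alpha\setminus F^k_\alpha):\alpha<\omega_1\,\}\cup\{\,\{j\}:j\in\omega\,\}$ of $\mathcal{U}_k$, where $F^k_\alpha$ is finite and $\{a_\alpha\}\cup(a_\alpha\setminus F^k_\alpha)$ lies inside a fixed member of $\mathcal{U}_k$; for $\mathcal{V}_k$ one has $a_\beta\in St^2(a_\gamma,\mathcal{V}_k)$ exactly when $\beta=\gamma$ or $(a_\gamma\cap a_\beta)\setminus(F^k_\gamma\cup F^k_\beta)\ne\emptyset$. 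The Luzin hypothesis enters precisely here: since $\alpha\mapsto F^k_\alpha$ has countable range, on an uncountable set of indices $F^k_\alpha$ is bounded by a fixed $m_k$, and then, for a fixed $\gamma$ in that set, the Luzin property (the set $\{\beta<\gamma:a_\beta\cap a_\gamma\subseteq m_k\}$ is finite) gives that $St^2(a_\gamma,\mathcal{V}_k)$ contains all but finitely many of the $a_\beta$ with $\beta<\gamma$ in that set.

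The final step is a recursion along $E=\{e_0<e_1<\cdots\}$ keeping an ``uncovered'' set $R_n\subseteq\omega_1$, $R_0=\omega_1$: at stage $n$, stop if $R_n$ is countable; otherwise apply (a) to $\mathcal{U}_{e_n}$ and $\mathcal{B}=\{a_\beta:\beta\in R_n\}$ to get $U_n$, let $x_{e_n}$ be a point of $U_n\cap\omega$ (nonempty as $\omega$ is dense), and by (b) delete from $R_n$ every $\beta$ with $|a_\beta\cap U_n|=\omega$ to obtain $R_{n+1}$. I expect the real obstacle to be making this recursion \emph{terminate}: bare pigeonhole only discards an uncountable chunk and can leave an uncountable residue, so the Luzin structure must be used to collapse $R_n$ to a countable set in countably many steps. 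The plan for that is to choose the stage-$n$ center more carefully — for instance a sufficiently large $a_{\gamma_n}\in\mathcal{A}$, invoking the formula for $St^2(a_{\gamma_n},\mathcal{V}_{e_n})$ from (c) and the fact that in a Luzin family each $a_{\gamma_n}$ meets, arbitrarily high up, all but finitely many of its predecessors — so that $R_{n+1}$ is, modulo a countable error, contained in a proper initial segment of $R_n$; verifying that the Luzin property really forces this bookkeeping to drive $R_n$ down to countable is the delicate part of the argument. Once $\mathcal{A}$ is covered up to a countable $\rho$, the reserved $F$-covers complete the proof as in the first paragraph.
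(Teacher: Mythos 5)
Your reduction at the start (reserving infinitely many covers to pick up $\omega$ together with a countable leftover of $\mathcal{A}$, one point per cover) is fine, and observations (a)--(c) are essentially correct. But the central step --- that the remaining covers can absorb all but countably many $a_\alpha$ into double stars --- is exactly what you leave as a ``plan,'' and the sketch does not supply it. The recursion built on (a) and (b) deletes one uncountable colour class $\{\beta\in R_n:|a_\beta\cap U_n|=\omega\}$ per stage, but the residue can remain uncountable at every stage and there is no progress measure forcing it down to a countable set. The Luzin-based refinement in (c) does not repair this: what it guarantees is only that $St^2(a_{\gamma_n},\mathcal{V}_{e_n})$ contains all but finitely many $a_\beta$ with $\beta<\gamma_n$ inside one uncountable set on which the finite sets $F^{e_n}_\alpha$ are constant; since $\gamma_n$ is a countable ordinal, each stage is only guaranteed to catch countably many members of $\mathcal{A}$, so countably many stages cannot by themselves yield a co-countable union. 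Thus the Luzin property has not actually been brought to bear on the whole family, and the termination you yourself flag as ``the delicate part'' is precisely the content of the proposition; as it stands the proof is incomplete at its key point.

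The paper's argument turns the quantifiers around and needs only the first cover. For each $\alpha$ fix $U^0_\alpha\in\mathcal{U}_0$ with $a_\alpha\in U^0_\alpha$ and $f(a_\alpha)\in\omega$ with $a_\alpha\setminus f(a_\alpha)\subseteq U^0_\alpha$; stabilize $f(a_\alpha)=n_0$ and $\min(U^0_\alpha\cap\omega)=n_1$ on an uncountable (in the paper, stationary) set $A_1$. Now fix $k$ and consider the $\beta$ with $f(a_\beta)=k$: all but finitely many of them admit some $\alpha\in A_1$ with $\alpha>\beta$ and $a_\beta\cap a_\alpha\not\subseteq\max\{k,n_0,n_1\}$, for otherwise infinitely many failing $\beta$ would lie below a single $\alpha\in A_1$ (take $\alpha\in A_1$ above the supremum of countably many of them), making $\{\beta<\alpha:a_\beta\cap a_\alpha\subseteq\max\{k,n_0,n_1\}\}$ infinite and contradicting the Luzin property at $\alpha$. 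For such $\beta$ choose $m\in a_\beta\cap a_\alpha$ above this bound; then $m\in U^0_\beta\cap U^0_\alpha$ and $n_1\in U^0_\alpha$, so $a_\beta\in U^0_\beta\subseteq St(m,\mathcal{U}_0)\subseteq St^2(n_1,\mathcal{U}_0)$. Hence the double star of the single integer $n_1$ with respect to $\mathcal{U}_0$ already covers all but countably many points, and the remaining covers finish exactly as in your first paragraph. The moral difference: instead of picking a fresh centre for each uncountable residue, one stabilizes the data over an uncountable set of potential partners and lets every $\beta$ find its own partner above it; some argument of this kind (or another verified termination argument) is needed to complete your proposal.
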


\begin{proof}
Let $\mathcal{A} = \{a_\alpha: \alpha < \omega_1\}$ be a Luzin family. Let $\{\mathcal{U}_n:n\in\omega\}$ be any sequence of open covers of $\Psi(\mathcal{A})$. For each $\alpha < \omega_1$, fix $U_\alpha^0 \in \mathcal{U}_0$ such that $a_\alpha \in U_\alpha^0$. For $\alpha < \omega_1$, let $f(a_\alpha) \in a_\alpha \cap \omega$ so that $a_\alpha \setminus f(a_\alpha) \subseteq U_\alpha^0 $.\\
For each $k \in \omega$, let $W_k = \{\alpha < \omega_1 : f(a_\alpha) = k\}$. Observe $\bigcup _{k \in \omega}W_k = \omega_1$. Furthermore, there is $A_0 \subseteq \omega_1$ stationary and $n_0 \in \omega$ such that for each $\alpha \in A_0$, $f(a_\alpha) = n_0$. Now, let $A_1 \subseteq A_0$ and $n_1 \in \omega$ so that $A_1 = \{\alpha < \omega_1 : min U_\alpha^0=n_1\}$ is stationary.
For each $k \in \omega$ and each $\alpha \in A_1$, let $B_{\alpha,k}= \{\beta < \alpha: a_\beta \cap a_\alpha \subseteq max\{k,n_0,n_1\}\}$. Since $\mathcal{A}$ is Luzin, each $B_{\alpha,k}$ is finite.\\
\textbf{Claim:} For each $k\in \omega$, $G_k = \bigcup_{\alpha \in A_1}(\alpha \setminus B_{\alpha,k})$ is cofinite.\\
Fix $k\in \omega$, there is $A^k \subseteq A_1$ stationary and $B \in [\omega]^{< \omega}$ so that for each $\alpha \in A^k$, $B_{\alpha,k} = B$. Then, $\omega_1  \setminus B = \bigcup_{\alpha \in A^k}(\alpha \setminus B) = \bigcup_{\alpha \in A^k}(\alpha \setminus B_{\alpha,k}) \subseteq \bigcup_{\alpha \in A_1}(\alpha \setminus B_{\alpha,k}) = G_k$.\\
Since for each $k \in \omega$, $G_k$ is cofinite, then $W_k \setminus G_k$ is finite. Hence, $\bigcup _{k\in \omega}W_k \cap G_k$ is cocountable.
Now we show that $\{a_\beta: \beta \in \bigcup _{k\in \omega}W_k \cap G_k\} \subseteq St^2(n_1,\mathcal{U}_0)$. Fix $k \in \omega$ and let $\beta \in W_k \cap G_k$, then $f(a_\beta) = k$ and there is $\alpha \in A^k$ such that $\beta \in \alpha \setminus B_{\alpha,k}$. Thus, $a_\beta \cap a_\alpha \not \subseteq max\{k,n_0,n_1\}$. Fix $m \in a_\beta \cap a_\alpha $ with $m> max\{k,n_0,n_1\}$. Hence, $m \in U_\alpha ^0$, $m \in U_\beta ^0$ and $m > n_1 = min  U_\alpha ^0$. Therefore, $m \in St(n_1,\mathcal{U}_0)$ and $a_\beta \in U_\beta ^0 \subseteq St(m,\mathcal{U}_0) \subseteq St^2(n_1,\mathcal{U}_0)$.

Therefore, $St^2(n_1,\mathcal{U}_0)$ contains all but countably many members of $\Psi(\mathcal{A})$, whence $\Psi(\mathcal{A})$ is strongly 2-star-Rothberger.
\end{proof}

Another fact, which is also interesting, is that $\Psi$-spaces induced by a maximal almost-disjoint family can be characterized in terms of the $strongly$ 2-$starcompact$ property (see \cite{DRRT} for information about iterated (strongly) starcompact property).

\begin{proposition}[\cite{DRRT}]
For an a.d. family $\mathcal{A}$, the following are equivalent:
\begin{enumerate}
\item $\Psi(\mathcal{A})$ is strongly 2-starcompact.
\item $\Psi(\mathcal{A})$ is strongly $k$-starcompact for every $k\geq 2$.
\item $\Psi(\mathcal{A})$ is 2-starcompact.
\item $\Psi(\mathcal{A})$ is $k$-starcompact for every $k\geq 2$.
\item $\Psi(\mathcal{A})$ is $k$-starcompact for some $k\geq 2$.
\item $\mathcal{A}$ is maximal.
\end{enumerate}
\end{proposition}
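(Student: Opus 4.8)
The plan is to prove the cycle of implications $(1)\Rightarrow(2)\Rightarrow(4)\Rightarrow(5)\Rightarrow(3)\Rightarrow(1)$ together with the equivalence to maximality, namely $(3)\Leftrightarrow(6)$; the remaining implications then follow formally. First I would record the general fact (an instance of Theorem 3.1.1 in \cite{DRRT}) that strongly $k$-starcompactness implies $k$-starcompactness implies strongly $(k+1)$-starcompactness for any space, which immediately yields $(2)\Rightarrow(4)\Rightarrow(5)$, $(1)\Rightarrow(3)$, and also $(3)\Rightarrow(5)$ with $k=2$; likewise $(1)\Rightarrow(2)$ and $(2)\Rightarrow(1)$ will be handled by the substantive argument below. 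So the real content is: (a) $(6)\Rightarrow(2)$, i.e.\ if $\mathcal A$ is maximal then $\Psi(\mathcal A)$ is strongly $k$-starcompact for every $k\ge 2$; and (b) $\neg(6)\Rightarrow\neg(5)$, i.e.\ if $\mathcal A$ is not maximal then $\Psi(\mathcal A)$ fails to be $k$-starcompact for every $k\ge 2$.

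For direction (a), let $\mathcal U$ be an open cover of $\Psi(\mathcal A)$ and shrink it to a cover $\mathcal V$ in which each $a\in\mathcal A$ sits in a basic neighbourhood $V_a=\{a\}\cup(a\setminus F_a)$ with $F_a$ finite. Since $\mathcal A$ is maximal, the pseudointersection-free property forces that for \emph{every} infinite $b\subseteq\omega$ the set $b$ meets $a$ infinitely often for some $a\in\mathcal A$; using this, I would argue that $\omega\setminus\bigcup_{a\in\mathcal A}(a\setminus F_a)$ is finite — indeed if it were infinite it would be an infinite subset of $\omega$ almost disjoint from every $a\in\mathcal A$ (because it omits $a\setminus F_a$, hence meets $a$ only in $F_a$), contradicting maximality. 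Pick finitely many isolated points covering $\omega\setminus\bigcup_{a}(a\setminus F_a)$ and one isolated point $n_a\in a\setminus F_a\subseteq V_a$ from — wait, that could be infinitely many; instead, the standard argument: a maximal $\mathcal A$ makes $\Psi(\mathcal A)$ pseudocompact, and more is true — take the finite set $F$ witnessing that $\omega\setminus\bigcup_a V_a$ is covered, then $St(F,\mathcal U)\supseteq\omega$ up to a finite set, and $St(\omega,\mathcal U)\supseteq St^2$ absorbs all of $\mathcal A$ since each $a$ has infinitely many isolated points of $a$ in $V_a$ which lie in $St(F,\mathcal U)$ once we also star once more. Concretely: with one finite set $F\subseteq\omega$ one gets $St^2(F,\mathcal U)=\Psi(\mathcal A)$, and then by the general iteration fact $St^k(F,\mathcal U)=\Psi(\mathcal A)$ for all $k\ge 2$, giving $(2)$.

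For direction (b), suppose $\mathcal A$ is not maximal and fix an infinite $b\subseteq\omega$ almost disjoint from every member of $\mathcal A$. Consider the open cover $\mathcal U$ consisting of $\{a\}\cup(a\setminus b)$ for each $a\in\mathcal A$ together with the singletons $\{n\}$ for $n\in\omega$. For any finite $\mathcal V\subseteq\mathcal U$, its union meets $b$ in a finite set, and iterating the star operation $k$ times through this cover can only ever pick up finitely many more points of $b$ at each stage (each set of the cover contains only finitely many points of $b$), so $St^k(\bigcup\mathcal V,\mathcal U)$ misses infinitely many isolated points of $b$; hence $\Psi(\mathcal A)$ is not $k$-starcompact for any $k\ge 2$, and also not $2$-starcompact, killing $(3),(4),(5)$ at once. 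The main obstacle I anticipate is the careful bookkeeping in (a): one must verify that the single finite set $F$ chosen at the first star step really does propagate — that each $a\in\mathcal A$ lies in $St^2(F,\mathcal U)$ — which hinges on the fact that $a\setminus F_a$ is an \emph{infinite} subset of $\omega$ all of whose points are isolated and hence each generates its own star, combined with maximality ensuring $F$ was chosen to hit (the closure of) the part of $\omega$ not already swallowed; packaging this without an off-by-one error in the number of stars is the delicate point. Everything else is the formal diagram-chase among $(1)$–$(5)$ via the generic iteration lemma.
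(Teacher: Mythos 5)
Your reduction of the proposition to two substantive implications is sound: the generic facts that strongly $k$-starcompact $\Rightarrow$ $k$-starcompact $\Rightarrow$ strongly $(k+1)$-starcompact and that $St^2(F,\mathcal U)=X$ forces $St^k(F,\mathcal U)=X$ for all $k\ge 2$ do give every formal arrow (your opening ``cycle'' lists $(5)\Rightarrow(3)$ as formal, which it is not, but your later reduction to (a) $(6)\Rightarrow(2)$ and (b) $\neg(6)\Rightarrow\neg(5)$ repairs the skeleton — this is exactly the paper's scheme of proving only $(5)\Rightarrow(6)$ and $(6)\Rightarrow(1)$). Your direction (b) uses the same cover as the paper (singletons of $\omega$ together with $\{a\}\cup(a\setminus b)$) and is essentially correct, though the parenthetical justification is too weak: the point is not that each cover member meets $b$ finitely (infinitely many members can be absorbed at each stage), but that the only members meeting $b$ at all are the singletons $\{n\}$ with $n\in b$, and such a singleton joins the star only if $n$ is already in it; hence $St^j(\bigcup\mathcal V,\mathcal U)\cap b=(\bigcup\mathcal V)\cap b$ for every $j$, which is finite.

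The genuine gap is in direction (a), maximality $\Rightarrow$ strong $2$-starcompactness. Maximality does give that $R=\omega\setminus\bigcup_{a\in\mathcal A}(a\setminus F_a)$ is finite, but that concerns the union of the chosen neighbourhoods over the whole (possibly uncountable) family $\mathcal A$, not the star of a finite set of points, and your key assertion — ``take the finite set $F$ covering $\omega\setminus\bigcup_a V_a$; then $St(F,\mathcal U)\supseteq\omega$ up to a finite set'' — is false in general. For instance, if $\mathcal U$ consists of the sets $\{a\}\cup a$ for $a\in\mathcal A$ together with all singletons $\{n\}$, then $R$ is finite and disjoint from every $a$, so $St(R,\mathcal U)=R$ and every iterated star of $R$ is still $R$; the space is nevertheless strongly $2$-starcompact, but via a different finite set whose existence is precisely what has to be proved. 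What is really needed is: for every open cover $\mathcal U$ there is a finite $F\subseteq\omega$ with $\omega\subseteq St(F,\mathcal U)$, after which $St^2(F,\mathcal U)\supseteq St(\omega,\mathcal U)=\Psi(\mathcal A)$ because $\omega$ is dense. This is where maximality must be used nontrivially, and the paper does it contrapositively: if no finite set of points works, recursively choose $b_0<b_1<\cdots$ with $b_j\notin St(\{b_i:i<j\},\mathcal U)$; then no member of $\mathcal U$ contains two points of $B=\{b_i:i<\omega\}$, so any $a\in\mathcal A$ meeting $B$ infinitely would place two points of $B$ in the chosen neighbourhood of $a$, a contradiction; hence $B$ is almost disjoint from every member of $\mathcal A$ and $\mathcal A$ is not maximal. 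Your sketch acknowledges this step is ``delicate'' but never supplies this (or any equivalent) extraction argument, so as written (a) does not go through.
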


It is worth to mention that the equivalences $(3)-(6)$ were showed in \cite{DRRT} for spaces in general (by using pseudocompactness instead of a maximal almost disjoint family) and the proof for $(6)\Rightarrow(1)$ is contained in the proof of Example 2.2.5 in same article. For convenience of the reader, we outline the proof of these equivalences for $\Psi$-spaces below.

\begin{proof}
Since the implications $(1)\Rightarrow(2)\Rightarrow(3)\Rightarrow(4)\Rightarrow(5)$ always hold, we just show $(5)\Rightarrow(6)$ and $(6)\Rightarrow(1)$.

Assume that $\Psi(\mathcal{A})$ is $k$-starcompact for some $k\geq 2$. If $\mathcal{A}$ is not maximal, there is some $B\subseteq \omega$ which is almost disjoint from every member of $\mathcal{A}$. Consider the open cover $\mathcal{U}$ consisting of singletons from $\omega$ together with sets of the form $A\setminus B$ for $A\in\mathcal{A}$. Then for any $\mathcal{W}\subseteq \mathcal{U}$, $St^k(\bigcup \mathcal{W}, \mathcal{U})$ intersects $B$ just in those natural numbers whose singletons were chosen in $\mathcal{W}$, as all other members of $\mathcal{U}$ are disjoint from $B$. Therefore $\Psi(A)$ is not $k$-starcompact.

If $\mathcal{A}$ is not 2-strongly starcompact, then there is an open cover $\mathcal{U}$ so that the star of every finite subcollection misses some element of $\omega$, otherwise taking another star would give all of $\Psi(\mathcal{A})$. Now we can choose an infinite subset $B$ with increasing enumeration $\{b_i:i<\omega\}$ of $\omega$ recursively so that for every $j<\omega$, $b_j\not\in St(\{b_i:i<j\}, \mathcal{U})$. Now we claim that $B$ is almost disjoint from every member of $\mathcal{A}$, and so $\mathcal{A}$ cannot be maximal. To see this, suppose that there is $A\in \mathcal{A}$ which intersects $B$ infinitely. Then the neighborhood of $A$ in $\mathcal{U}$ contains two points $b_{j_0}, b_{j_1}$ with $j_0<j_1$. So this neighborhood is contained in $St(b_{j_0},\mathcal{U})$, contradicting the choice of $j_1$. 
\end{proof}

\begin{example}
\label{2SSM, SSL,not SM}
$(\mathfrak{d} = \omega_1)$ There is a Tychonoff, strongly 2-star-Rothberger, strongly star-Lindel\"of, not star-Menger space.
\end{example}

\begin{proof}
Let $\mathcal{A}$ be a Luzin family. Since $\Psi(\mathcal{A})$ is separable, it is strongly star-Lindel\"of and, by Proposition \ref{Luzinfam}, it is strongly 2-star-Rothberger. 
Since $\mathfrak{d}=\omega_1$, by Proposition 9 in \cite{BM},  $\Psi(\mathcal{A})$ is not star-Menger.
\end{proof} 

\begin{corollary}
If $\mathfrak{d} = \omega_1$ then there is a Tychonoff, 2-star Menger, star-Lindel\"of, not star-Menger space.
\end{corollary}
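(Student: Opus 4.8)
The plan is to deduce this corollary directly from Example \ref{2SSM, SSL,not SM} by observing that every property appearing in that example either implies, or is implied by, the corresponding property in the corollary. First I would recall the implications already established in the paper: a strongly $2$-star-Rothberger space is strongly $2$-star-Menger (this is the Rothberger-to-Menger implication at the iterated level, exactly as in the non-iterated diagram of Figure 1, since choosing a single point is a special case of choosing a finite set), and a strongly $2$-star-Menger space is $2$-star-Menger (choosing a finite set of points $F_n$ and then taking $W_n=\{U\in\mathcal{U}_n:F_n\cap U\neq\emptyset\}$ gives $St(F_n,\mathcal{U}_n)=\bigcup W_n$, so $St^2(F_n,\mathcal{U}_n)=St(\bigcup W_n,\mathcal{U}_n)$). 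Likewise, strongly star-Lindel\"of ($SSL$) implies star-Lindel\"of ($SL$) by the same passage from a countable set $C_n$ to the countable subcollection $\{U\in\mathcal{U}_n:C_n\cap U\neq\emptyset\}$ refining to $\bigcup\mathcal{V}_n\supseteq$ enough of the star; more simply, $SSL\to SL$ is standard and already implicit in the diagram.

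Then I would simply take the space $X=\Psi(\mathcal{A})$ produced in Example \ref{2SSM, SSL,not SM} under the hypothesis $\mathfrak{d}=\omega_1$: it is Tychonoff, strongly $2$-star-Rothberger, strongly star-Lindel\"of, and not star-Menger. By the implications above, $X$ is $2$-star-Menger and star-Lindel\"of, while it remains not star-Menger. This is exactly the assertion of the corollary, so the proof is essentially a one-line invocation: ``This is witnessed by the same space as in Example \ref{2SSM, SSL,not SM}, using that strongly $2$-star-Rothberger implies $2$-star-Menger and strongly star-Lindel\"of implies star-Lindel\"of.''

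I do not expect any real obstacle here; the only thing that needs a moment's care is making the two implications (strongly $2$-star-Rothberger $\to$ $2$-star-Menger and $SSL\to SL$) explicit, since they are the ``obvious'' arrows but the paper has only stated the non-iterated diagram in Figure 1 and the definitions of the iterated notions in Definitions \ref{nstarMenger} and \ref{nstarRothberger}. If a referee wanted more detail I would spell out: given a sequence $\{\mathcal{U}_n\}$ of open covers, strongly $2$-star-Rothberger yields points $x_n$ with $\{St^2(x_n,\mathcal{U}_n):n\in\omega\}$ covering $X$; put $\mathcal{V}_n=\{U\in\mathcal{U}_n:x_n\in U\}\in[\mathcal{U}_n]^{<\omega}$, then $\{x_n\}\subseteq\bigcup\mathcal{V}_n$ and hence $St^2(x_n,\mathcal{U}_n)\subseteq St^2(\bigcup\mathcal{V}_n,\mathcal{U}_n)$, so $\{St^2(\bigcup\mathcal{V}_n,\mathcal{U}_n):n\in\omega\}$ covers $X$, i.e. $X$ is $2$-star-Menger. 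The $SSL\to SL$ step is identical with a countable set in place of a finite one and one star instead of two.
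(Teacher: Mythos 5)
Your overall route is exactly the paper's: the corollary is stated with no separate proof, and it is meant to follow immediately from Example \ref{2SSM, SSL,not SM} via the implications strongly $2$-star-Rothberger $\rightarrow$ $2$-star-Menger and $SSL \rightarrow SL$, which is precisely what you do.

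However, the way you spell out the key implication contains a step that is false as written. You set $\mathcal{V}_n=\{U\in\mathcal{U}_n : x_n\in U\}$ and claim $\mathcal{V}_n\in[\mathcal{U}_n]^{<\omega}$; likewise, in your first paragraph you pass from a finite $F_n$ to $W_n=\{U\in\mathcal{U}_n : F_n\cap U\neq\emptyset\}$ and treat it as finite. Neither family need be finite (nor, in the $SSL\to SL$ variant, countable), since the covers are not assumed point-finite or point-countable; that hypothesis is exactly what makes results such as Proposition \ref{ML+(n+1)SSLimplies(n)SL} require metaLindel\"ofness. The repair is trivial and is what the paper's conventions suggest: for each chosen point pick a \emph{single} member of the cover containing it. Concretely, given $x_n$ with $\{St^2(x_n,\mathcal{U}_n):n\in\omega\}$ covering $X$, choose one $U_n\in\mathcal{U}_n$ with $x_n\in U_n$ and put $\mathcal{V}_n=\{U_n\}$; then $\{x_n\}\subseteq\bigcup\mathcal{V}_n$, so by Lemma \ref{containmentsOfStars} (with $\mathcal{V}=\mathcal{U}_n$) one has $St^2(x_n,\mathcal{U}_n)\subseteq St^2(\bigcup\mathcal{V}_n,\mathcal{U}_n)$, giving $2$-star-Menger. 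The same one-element-per-point selection (one $U_c\ni c$ for each $c$ in the countable set $C$) gives $SSL\to SL$, which is in any case already recorded in Figure 1. With that correction your argument is complete and coincides with the paper's intended (unwritten) justification.
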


In contrast to $\Psi$-spaces induced from Luzin families, we also have examples of $\Psi$-spaces that not hold any iteration of star-Menger property.

\begin{example}\label{noparM}
There is a $\Psi$-space which is not $m$-star Menger for any $m<\omega$.
\end{example}
\begin{proof}
Identify $\omega^{<\omega}$ with $\omega$ and let $\mathcal{A}$ be the branches of the tree $\omega^{<\omega}$. Now, for each $n$, let $\mathcal{U}_n$ be the open cover consisting of the sets $N_s=\{a\in \Psi(\mathcal{A}):s\sqsubseteq a\}$, where $s\in\omega^{n+1}$, and $\{x\}$ for all $x\not\in\bigcup\{N_s:s\in\omega^{n+1}\}$. Then, for each $n\in\omega$, $\mathcal{U}_n$ consists of pairwise disjoint clopen sets and therefore, no elements are picked up when taking stars.

Suppose $\mathcal{V}_n\subseteq \mathcal{U}_n$ is finite. By induction, define $x\in\mathcal{A}$ so that $N_{x\upharpoonright (n+1)}\not\in \mathcal{V}_n$. It is clear that $x\not\in St^m(\bigcup\mathcal{V}_n,\mathcal{U}_n)$ for any $m,n<\omega$.
\end{proof}

We can also define $\Psi$-spaces associated to maximal almost disjoint families that do not satisfy any iteration of the star-Rothberger property.

\begin{example}
There is a MAD family $\mathcal{A}$ so that $\Psi(\mathcal{A})$ is not $k$-star Rothberger for any $k$.
\end{example}
\begin{proof}
Enumerate all infinite subsets of $\omega$ as $\langle b_\alpha:\alpha<\mathfrak{c}, \alpha \textrm{ even} \rangle$ and all functions in $\prod_n {}^n3$ as $\langle s_\alpha:\alpha<\mathfrak{c}, \alpha \textrm{ odd} \rangle$. Note that any ordinal can be written as $\alpha+n$ for some $n<\omega$, and ``odd'' and ``even'' above refer to the parity of this integer $n$.

Let $\{B_t:t\in {}^{<\omega}3\}$ be a sequence of infinite subsets of $\omega$ so that $B_\emptyset=\omega$ and for all $t\in {}^{<\omega}2$, $\{B_{t^\frown 0},B_{t^\frown 1}, B_{t^\frown 2}\}$ partition $B_t$. 

We will define $\mathcal{A}=\{a_\alpha:\alpha<\mathfrak{c}\}$. Suppose $a_\beta$, $\beta<\alpha$, have already been defined.

We will construct $a_\alpha$ infinite subsets of $\omega$ and $x_\alpha$ branches of the tree so that
\begin{enumerate}
    \item For $\beta<\alpha$, $a_\alpha$ is almost disjoint from $a_\beta$.
    \item For each $n$, $\{z_m:m\ge n\} \subseteq B_{x_\alpha\upharpoonright n}$, where $\{z_m:m<\omega\}$ is the increasing enumeration of $a_\alpha$ (so that in particular, $a_\alpha\subseteq^* B_{x_\alpha\upharpoonright n}$ for any $n$).
\end{enumerate}

If $\alpha$ is even, then we will construct $a_\alpha$ so that

\begin{enumerate}
    \item[3'.] If $b_\alpha$ is almost disjoint from each $a_\beta,$ $\beta<\alpha$, then $a_\alpha\subseteq b_\alpha$.
\end{enumerate}
Let us assume the case where $b_\alpha$ is almost disjoint from each $a_\beta,$ $\beta<\alpha$, and hence the third condition applies. Note that if $3'$ is satisfied, then so is the first condition. Let $x_\alpha$ be a branch through the tree so that $B_{x_\alpha\upharpoonright n}\cap b_\beta$ is infinite for each $n$. Define $\{z_n:n<\omega\}$ to be an increasing sequence of natural numbers so that $z_n\in B_{x_\alpha\upharpoonright n}\cap b_\alpha$. 

If $\alpha$ is odd, then we will construct $a_\alpha$ so that
\begin{enumerate}
    \item[3''.] For each $n$, $\{z_m:m\ge n\}\cap B_{s_\alpha(n+1)}=\emptyset$.
\end{enumerate}
This construction will proceed in $\omega$ steps. Let $S$ be the subtree of ${}^{<\omega}3$ of all $t$ so that for all $s\le t$, $s \neq s_\alpha(n)$, where $n$ is the length of $s$. The resulting subtree $S$ still splits at every node.

Let $x_\alpha$ be a branch through this tree which is not $x_\beta$ for any $\beta<\alpha$. This is possible since the tree has $\mathfrak{c}$ branches.

In step $n$, pick $z_n\in B_{x_\alpha\upharpoonright (n+1)}$ greater than all previously chosen $z_m$, $m<n$. Let $a_\alpha = \{z_n:n<\omega\}$. As $B_{x_\alpha\upharpoonright n}$ is $\subseteq$-decreasing along the branch, this ensures that for each $n$, $\{z_m:m\ge n\} \subseteq B_{x_\alpha\upharpoonright n}$. 

If $\beta<\alpha$, then let $i$ be so that $x_\alpha(i)\neq x_\beta(i)$. We have that $a_\alpha\subseteq^* B_{x_\alpha\upharpoonright(i+1)}$ and $a_\beta\subseteq^* B_{x_\beta\upharpoonright(i+1)}$, yet $B_{x_\alpha\upharpoonright(i+1)}$ is disjoint from $B_{x_\beta\upharpoonright(i+1)}$. Therefore $a_\alpha$ is almost disjoint from $a_\beta$.

Since $x_\alpha\upharpoonright (n+1)\neq s_\alpha(n+1)$ and $\{z_m:m\ge n\} \subseteq B_{x_\alpha\upharpoonright n}$, we have that 3'' holds.

For each $n$, let $a_\alpha^{(n)}$ be the set $a_\alpha$ with the least $n$ elements removed and let $\mathcal{U}_n$ be the open cover of $\Psi(\mathcal{A})$ consisting of singletons from $\omega$ together with $\{a_\alpha\}\cup a_\alpha^{(n)}$. This sequence of covers shows that $\Psi(\mathcal{A})$ is not $k$-star Rothberger. For any selection of sets $\{U_n:n<\omega\}$, for each $n$ there is some $s(n)\in{}^n3$ so that $U_n\subseteq B_{s(n)}$. Now take $\alpha$ so that $s=s_\alpha$. Then $a_\alpha$ is disjoint from the $k$-iterated star of $U_n$ in $\mathcal{U}_n$ for each $n$ by construction. 
\end{proof}

We finish this section giving a result on $\Psi$-spaces that combines the style of the results in Section \ref{sectionsmallunions} with iterations of a star selection property introduced in Section \ref{sectioniteratedstars}.

\begin{theorem}
For any $k\in\omega$, if $X$ is the union of less than $\mathfrak{b}$ k-strongly star-Menger $\Psi$-spaces on $\omega$, then $X$ is k-strongly star-Menger.
\end{theorem}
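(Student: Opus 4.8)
The plan is to mimic the pattern of the ``small union'' arguments from Section \ref{sectionsmallunions}, but carried out internally to $\Psi$-spaces on $\omega$ and at the level of the $k$-th iterated star. Write $X = \bigcup_{\alpha<\kappa}\Psi(\mathcal{A}_\alpha)$ with $\kappa<\mathfrak{b}$ and each $\Psi(\mathcal{A}_\alpha)$ a $k$-strongly star-Menger $\Psi$-space on $\omega$; note that each $\Psi(\mathcal{A}_\alpha)$ shares the common countable dense set $\omega$ of isolated points. Given a sequence $\{\mathcal{U}_n:n\in\omega\}$ of open covers of $X$, the key first observation is that, because $\omega$ is dense in $X$ and consists of isolated points, for each $n$ the star $St(F,\mathcal{U}_n)$ of a finite $F\subseteq\omega$ can be approximated by stars of finite unions of a \emph{fixed countable subfamily} of $\mathcal{U}_n$: enumerate for each $n$ a countable $\{U_n^i:i\in\omega\}\subseteq\mathcal{U}_n$ covering $\omega$ (hence covering a dense set), and observe $St^k(m,\mathcal{U}_n)\subseteq St^k(\bigcup_{i\le j}U_n^i,\mathcal{U}_n)$ once $m\in\bigcup_{i\le j}U_n^i$. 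This reduces the relevant data on each piece to a function in $\omega^\omega$.

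Next, for each $\alpha<\kappa$ apply $k$-strong star-Mengerness of $\Psi(\mathcal{A}_\alpha)$ to the sequence $\{\mathcal{U}_n:n\in\omega\}$ (restricted to $\Psi(\mathcal{A}_\alpha)$) to obtain finite sets $F_n^\alpha\subseteq\Psi(\mathcal{A}_\alpha)$ with $\{St^k(F_n^\alpha,\mathcal{U}_n):n\in\omega\}$ an open cover of $\Psi(\mathcal{A}_\alpha)$; by enlarging each $F_n^\alpha$ slightly (replacing a point of $\mathcal{A}_\alpha$ by a sufficiently large natural number in it, which only shrinks the relevant neighborhood and hence is absorbed by one more star) we may assume $F_n^\alpha\subseteq\omega$. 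As in the proofs of Propositions \ref{(L,(<b,SM))} and \ref{AlguitoE**omega}, use Lemma \ref{largecoverss}(\ref{largecoverss2}) to upgrade this to a \emph{large} cover, so that for every $m\in\omega$ the tail $\{St^k(F_n^\alpha,\mathcal{U}_n):m\le n<\omega\}$ already covers $\Psi(\mathcal{A}_\alpha)$. Define $f_\alpha(n)=\min\{j:F_n^\alpha\subseteq\bigcup_{i\le j}U_n^i\}$. Since $\kappa<\mathfrak{b}$, pick $g\in\omega^\omega$ with $f_\alpha\le^* g$ for all $\alpha$, and set $\mathcal{W}_n=\{U_n^i:i\le g(n)\}$, then $G_n=$ a finite set of points of $\omega$ witnessing $\bigcup\mathcal{W}_n\subseteq St(G_n,\mathcal{U}_n)$ (choosing one point of $\omega$ in each $U_n^i$, $i\le g(n)$, using density). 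Then $St^k(\bigcup\mathcal{W}_n,\mathcal{U}_n)\subseteq St^{k+1}(G_n,\mathcal{U}_n)$ — so one pays an extra star here, which is a problem, and I will address it below.

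To verify the covering property, fix $x\in X$, choose $\alpha$ with $x\in\Psi(\mathcal{A}_\alpha)$, let $m$ be large enough that $f_\alpha(n)\le g(n)$ for all $n\ge m$; by the large-cover property there is $n\ge m$ with $x\in St^k(F_n^\alpha,\mathcal{U}_n)$, and then $x\in St^k(F_n^\alpha,\mathcal{U}_n)\subseteq St^k(\bigcup_{i\le f_\alpha(n)}U_n^i,\mathcal{U}_n)\subseteq St^k(\bigcup\mathcal{W}_n,\mathcal{U}_n)$, so $\{St^k(\bigcup\mathcal{W}_n,\mathcal{U}_n):n\in\omega\}$ covers $X$, i.e.\ $X$ is $k$-star-Menger.

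\textbf{The main obstacle.} The difficulty is precisely the gap between $k$-star-Menger (what the above yields directly, via the $\mathcal{W}_n\subseteq\mathcal{U}_n$) and $k$-strongly star-Menger (finite sets of \emph{points}), plus the extra star incurred when converting $\bigcup\mathcal{W}_n$ into $St(G_n,\mathcal{U}_n)$. To get the strongly version with the correct index $k$, I would instead run the argument one level down: first prove (as in Lemma \ref{AlgoE*omega}/Lemma \ref{AlguitoE**omega}) that a union of fewer than $\mathfrak{b}$ many $k$-strongly star-Menger $\Psi$-spaces on $\omega$ is ``countably $k$-strongly star-Menger'' — i.e.\ satisfies the $E^*_\omega$-analogue $S^*_{fin}$-style selection for the iterated star against countable covers, using that the covers $\mathcal{W}_n=\{St^{k-1}(m,\mathcal{U}_n):m\in\omega\}$ refined by isolated points are countable — and then use that $\omega\subseteq X$ is a countable dense set of isolated points so that $St(C,\mathcal{U}_n)$ for $C\in[X]^{\le\omega}$ equals $\bigcup_{m\in C'}St(m,\mathcal{U}_n)$ for a countable $C'\subseteq\omega$, exactly as in the proofs of Propositions \ref{(SSL,(<b,M))} and \ref{(SSL,(<b,H))}. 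Feeding the countable cover $\{St^{k-1}(m,\mathcal{U}_n):m\in\omega\}$ into the countable-selection statement produces finite $F_n\subseteq\omega$ with $\{St^{k-1}(St(F_n,\mathcal{U}_n),\mathcal{U}_n):n\in\omega\}=\{St^k(F_n,\mathcal{U}_n):n\in\omega\}$ an open cover of $X$, which is exactly $k$-strong star-Mengerness with no index loss. The one genuinely delicate point to check carefully is that each $\Psi(\mathcal{A}_\alpha)$, being $k$-strongly star-Menger \emph{as a subspace}, still has enough control over stars taken in the ambient $X$: here Lemma \ref{containmentsOfStars} together with the fact that basic neighborhoods in $\Psi(\mathcal{A}_\alpha)$ and in $X$ around points of $\omega$ coincide (both are singletons) makes the restriction harmless, so the selections found inside $\Psi(\mathcal{A}_\alpha)$ transfer verbatim.
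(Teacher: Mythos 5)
Your proposal correctly isolates where the difficulty lies, but it does not resolve it, and the theorem is exactly about that point. The crux is: on each piece, $k$-strong star-Mengerness of $\Psi(\mathcal{A}_\alpha)$ hands you finite sets $F_n^\alpha$ that may contain points of $\mathcal{A}_\alpha$, while the $\mathfrak{b}$-domination machinery needs finite subsets of $\omega$ (so they can be coded by their maxima and compared by inclusion into initial segments) \emph{with the same number of stars}. The only transfer mechanism you give --- replace $A\in\mathcal{A}_\alpha$ by a large $n_A\in A$ --- costs an extra star, as you yourself note, and so yields at best a $k{+}1$-iterated or ``star'' version, not $k$-strong star-Mengerness. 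Your ``one level down'' repair is circular: the ``countable-selection statement'' for the union must itself be proved piecewise via functions in $\omega^\omega$, and on each piece you need finite subfamilies of the countable cover $\{St^{k}(m,\mathcal{U}_n):m\in\omega\}$ (equivalently finite subsets of $\omega$) whose $k$-stars cover a tail of $\Psi(\mathcal{A}_\alpha)$ --- which is precisely the unproved transfer. Rewriting $St^{k}(F_n,\mathcal{U}_n)$ as $St^{k-1}(St(F_n,\mathcal{U}_n),\mathcal{U}_n)$ is only a re-bracketing and produces no witnesses inside $\omega$; also, for $k=1$ your covers $\{St^{k-1}(m,\mathcal{U}_n):m\in\omega\}$ are just the singletons of $\omega$ and do not cover $X$. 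The subspace-versus-ambient issue you flag at the end is the harmless part (Lemma \ref{containmentsOfStars} handles it); the $\mathcal{A}_\alpha$-points in the selections are the real obstacle.

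The paper's proof closes exactly this gap. First it assumes, harmlessly by Lemma \ref{containmentsOfStars}, that every $\mathcal{U}_n$ consists of \emph{basic} open sets, so any member containing $A\in\mathcal{A}_\alpha$ has the form $\{A\}\cup(A\setminus F)\subseteq\{A\}\cup A$. Then, for each $A\in F_n^\alpha\cap\mathcal{A}_\alpha$, it fixes one such $U_A=\{A\}\cup(A\setminus F_A)$ in $\mathcal{U}_n$ and puts into $G_n^\alpha\subseteq\omega$ both a point $n_A\in A\setminus F_A$ \emph{and all of the finite set} $F_A$ (together with $F_n^\alpha\cap\omega$). Any basic $U\in\mathcal{U}_n$ meeting $F_n^\alpha$ at $A$ has all its points either equal to $A$ or in $A$, hence either in $U_A$ (which meets $G_n^\alpha$ at $n_A$) or in $F_A\subseteq G_n^\alpha$; this gives $St(F_n^\alpha,\mathcal{U}_n)\subseteq St(G_n^\alpha,\mathcal{U}_n)$ with \emph{no} extra star, and iterating gives the same inclusion for $St^k$. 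After that the argument is the routine one you had in mind: the large-cover trick on each piece, domination of $f_\alpha(n)=\max G_n^\alpha$ by some $g$ (using $\kappa<\mathfrak{b}$), and the finite sets $D_n=\{i:i\le g(n)\}\subseteq\omega$, which absorb every $G_n^\alpha$ with $f_\alpha(n)\le g(n)$ by plain set inclusion. Without an analogue of the $G_n^\alpha$ construction, your proposal does not prove the stated theorem.
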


\begin{proof}
Fix $k\in\omega$. Let $\kappa$ be a cardinal less than $\mathfrak{b}$. For each $\alpha<\kappa$, let $\Psi_{\alpha}$ be a $\Psi$-space on $\omega$ defined by an a.d. family $\mathcal{A}_\alpha$ and put $X=\bigcup_{\alpha<\kappa}\Psi_\alpha$ where each $\Psi_\alpha$ is $k$-strongly star-Menger. Let $\{ \mathcal{U}_n : n \in \omega \}$ be a sequence of open covers of $X$ consisting of basic open sets (for each $\alpha<\kappa$, $n\in\omega$ and $U\in\mathcal{U}_n$, $|U\cap \mathcal{A}_\alpha|\leq 1$). For each $\alpha<\kappa$, let $F_n^\alpha\in[\Psi_\alpha]^{<\omega}$ such that for every $m\in\omega$,  $\{St^k(F_n^\alpha, \mathcal{U}_n): m \leq n < \omega\}$ is an open cover of $\Psi_\alpha$. 

Fix $\alpha<\kappa$ and $n\in\omega$. For each $A\in F_n^\alpha\cap\mathcal{A}_\alpha$, let $U_A$ be a member of $\mathcal{U}_n$ such that $A\in U_A$. So, $U_A=\{A\}\cup A \setminus F_A$ for some $F_A\in[A]^{<\omega}$. For each $A\in F_n^\alpha\cap\mathcal{A}_\alpha$, fix $n_A\in A\setminus F_A$. Thus, for each $\alpha<\kappa$ and for each $n\in\omega$, let $$G_n^\alpha=(F_n^\alpha\cap\omega)\cup\{n_A:A\in F_n^\alpha\cap\mathcal{A}_\alpha\}\cup\{\bigcup F_A:A\in F_n^\alpha\cap\mathcal{A}_\alpha\}.$$
Then, for each $\alpha<\kappa$ and each $n\in\omega$, we have $G_n^\alpha\in[\omega]^{<\omega}$ and $St(F_n^\alpha,\mathcal{U}_n)\subseteq St(G_n^\alpha,\mathcal{U}_n)$. Indeed, let $x\in St(F_n^\alpha,\mathcal{U}_n)$. Then there exists $U\in \mathcal{U}_n$ such that $x\in U$ and $U\cap F_n^\alpha\neq\emptyset$. We have two cases:\\
If $U\cap(F_n^\alpha\cap\omega)\neq\emptyset$, then $U\cap G_n^\alpha\neq\emptyset$ and therefore, $x\in U\subseteq St(G_n^\alpha,\mathcal{U}_n)$. \\
If $U\cap(F_n^\alpha\cap\mathcal{A}_\alpha)\neq\emptyset$, then $U=\{A\}\cup A\setminus F$ for some $A\in F_n^\alpha\cap \mathcal{A}_\alpha$ and for some $F\in[A]^{<\omega}$. Then $x\in St(F_A\cup\{n_A\},\mathcal{U}_n)\subseteq St(G_n^\alpha,\mathcal{U}_n)$.

We conclude that $St(F_n^\alpha,\mathcal{U}_n)\subseteq St(G_n^\alpha,\mathcal{U}_n)$.

Now, we define, for each $\alpha<\kappa$, a function $f_\alpha:\omega\rightarrow\omega$ as $f_\alpha(n)=max (G_n^\alpha)$ for each $n\in\omega$. Since the collection $\{f_\alpha: \alpha < \kappa \}$ has size less than $\mathfrak{b}$, there exists $g \in \omega^\omega$ such that for every $\alpha < \kappa$, $f_\alpha \leq ^* g$. For each $n \in \omega$, let $D_n =\{i\in\omega: 0\leq i \leq g(n) \}$. Then each $D_n$ is a finite subset of $\omega$ and it follows that $\{St^k(D_n,\mathcal{U}_n): n\in\omega\}$ is an open cover of $X$. Indeed, let $x \in X$. Then, there exists $\alpha < \kappa$ such that $x\in \Psi_\alpha$. Since $f_\alpha \leq ^* g$, there is $m\in\omega$ so that for every $n\geq m$, $f_\alpha(n)\leq g(n)$. Furthermore, since the collection $\{St^k(F_n^\alpha, \mathcal{U}_n): m \leq n < \omega\}$ is an open cover of $\Psi_\alpha$, let $n \geq m$ such that $x\in St^k(F_n^\alpha, \mathcal{U}_n)$. We obtain that $St(F_n^\alpha, \mathcal{U}_n) \subseteq St(G_n^\alpha, \mathcal{U}_n) \subseteq St(\{1,\ldots, f_\alpha(n)\}, \mathcal{U}_n) \subseteq St(\{1,\ldots, g(n)\}, \mathcal{U}_n)= St(D_n, \mathcal{U}_n)$. Thus, $x\in St^k(F_n^\alpha,\mathcal{U}_n)\subseteq St^k(D_n,\mathcal{U}_n)$. Therefore, the collection $\{St^k(D_n,\mathcal{U}_n):n\in\omega\}$ is an open cover of $X$. Thus, $X$ is $k$-strongly star-Menger.
\end{proof}

\section{Normal star-Menger not strongly star-Menger not Dowker space}

\noindent Recall that $X$ is a Dowker space if and only if $X$ is normal and its Cartesian product with the closed unit interval $I$ is not normal. Equivalently,  $X$ is normal and not countably paracompact. In \cite{CGS} the following questions were posed:

\begin{question}[\cite{CGS} Question 2.4]
\label{Normal SM not SSM}
Is there a normal star-Menger space which is not strongly star-Menger?
\end{question}

\begin{question}[\cite{CGS} Question 2.21]
\label{Normal SM not SSM is it Dowker}
Are normal, countably paracompact star-Menger spaces strongly star-Menger? I.e., if $X$ is normal, star-Menger, not strongly star-Menger, is $X$ a Dowker Space?
\end{question}

\noindent In this section we present a consistent example (Example \ref{normalSLnotSSL} below), of a normal star Menger not strongly star-Menger not Dowker space. This space answers consistently in the afirmative Question \ref{Normal SM not SSM} and in the negative Question \ref{Normal SM not SSM is it Dowker}.

\noindent In \cite{Ta84} Tall presented an example of a separable normal space with an uncountable discrete subspace. Below we provide details of the construction of such example for sake of completeness:

\begin{example}[\cite{Ta84} Example E]
\label{Tallexample}
Assuming $2^{\aleph_0} = 2^{\aleph_1}$ there exists a separable normal $T_1$ space with an uncountable closed subspace.
\end{example}

\noindent {\bf Construction:} Let $L$ be a set of cardinality $\aleph_1$ disjoint from $\omega$. The existence of a strongly independent family $\mathcal{F}$\footnote{For an infinite cardinal $\kappa$, a family $\mathcal{F} \subseteq \mathcal{P}(\kappa)$ is called \emph{independent} if for all pairs of disjoint $F,G \in [\mathcal{F}]^{<\omega}$ we have:
$$C_{F,G} = \bigcap_{A\in F}A \cap \bigcap _{A \in G}(\kappa \smallsetminus A) \neq \emptyset$$
(Assume $\bigcap \emptyset = \kappa$). If in addition, for each pair $(F,G)$ as above, $|C_{F,G}| = \kappa$, $\mathcal{F} $ is called \emph{strongly independent}.} of subsets of $\aleph_0$ of size $2^{\aleph_0} = \mathfrak{c}$ is guaranteed by the Fichtenholz-Kantorovitch-Hausdorff Theorem\footnote{For every infinite cardinal $\kappa$ there exists a strongly independent family $\mathcal{F} \subseteq \mathcal{P}(\kappa)$ such that $|\mathcal{F}| = 2^\kappa$.}.

Write $\mathcal{F} = \{A_\alpha: \alpha < \mathfrak{c}\}$. Since $|L| =\aleph_1$, $|\mathcal{P}(L)| = 2^{\aleph_1}$. Assuming $2^{\aleph_0} = 2^{\aleph_1}$ it is possible to build a function $f:\mathcal{P}(L) \to \{A_\alpha: \alpha < \mathfrak{c}\} \cup \{\omega \smallsetminus A_\alpha: \alpha < \mathfrak{c}\}$ which is bijective and complement-preserving (for each $B \subseteq L$, $f(L\smallsetminus B) = \omega \smallsetminus f(B)$).\\
Now let $X = L \cup \omega$ with a subbase $\varphi$ for a topology defined by
\begin{enumerate}
\item if $M\subseteq L$, then $M \cup f(M) \in \varphi$,
\item if $n \in \omega$, then $\{n\} \in \varphi$,
\item if $p \in X$, then $X \smallsetminus \{p\} \in \varphi$.
\end{enumerate}
Observe that by condition (3) $X$ is $T_1$. By (2) $\omega$ is open, therefore $L = X \smallsetminus \omega$ is closed and, by (1) for any $x\in L$, $\{x\} \cup f(\{x\})$ is open such that $[\{x\} \cup f(\{x\})] \cap L = \{x\}$, that is $L$ is discrete. $X$ is separable since $\omega$ is dense in $X$: let $U$ be any nonempty  basic open set, then 
$$U = \bigcap _{U\in F} U \cap \bigcap _{U\in G} U \cap \bigcap _{U\in H} U$$
where $F, G, H$ are finite (possibly empty), each $U \in F$ is a subbasic open set defined as in (1), each $U \in G$ is a subbasic open set defined as in (2), and each $U \in H$ is a subbasic open set defined as in (3). To show $U \cap \omega \neq \emptyset$ it is enough to observe that $| \big(\bigcap _{U\in F} U\big) \cap \omega| = \omega$. This is always the case since $\mathcal{F}$ is a strongly independent family. Now let $Y,Z$ be disjoint closed subsets of $X$ and observe:

\begin{eqnarray*}
U_Y & = &\big( (Y \smallsetminus L) \cup [(Y \cap L) \cup f(Y \cap L)] \big) \cap (X \smallsetminus Z) \\
& = & \big(Y \cup f(Y \cap L)\big) \cap (X \smallsetminus Z) \\
U_Z & = & \big( (Z \smallsetminus L) \cup [(L \smallsetminus Y) \cup f(L \smallsetminus Y)] \big) \cap (X \smallsetminus Y) 
\end{eqnarray*}
are open sets and $Y \subseteq  U_Y$, $Z \subseteq U_Z$. Assume $x \in U_Y \cap U_Z$, then $x \in X \smallsetminus (Y \cup Z)$ and $x \in f(Y\cap L) \cap f(L \smallsetminus Y)$. But this is a contradiction since $f$ is complement preserving: $f(L \smallsetminus Y) = f(L \smallsetminus (Y\cap L)) = \omega \smallsetminus (Y \cap L)$. Hence, $X$ is normal.
$\blacksquare$\\

\noindent The following example presented by Song in \cite{YKS1102} and \cite{YKS1208} is a modification of Example \ref{Tallexample}. Song proved, in particular, that this space is normal, star-Lindel\"of and not strongly star-Lindel\"of (actually he showed something stronger: there is $\mathcal{U} \in \mathcal{O}(X)$ such that for all $L \subseteq X$ Lindel\"of subspace of $X$, $St(L,\mathcal{U}) \neq X$).

\begin{example}[\cite{YKS1102}, \cite{YKS1208}]
\label{normalSLnotSSL} 
Assuming $2^{\aleph_0} = 2^{\aleph_1}$ there exists a normal $T_1$ space which is star-Lindel\"of and not strongly star Lindel\"of.
\end{example}

\noindent {\bf Construction:}  Let $X_0 = L \cup \omega$ denote the space built in Example \ref{Tallexample}. Let $X = L \cup (\omega_1 \times \omega)$ and topologize it as follows, a basic open set of
\begin{description}
\item[(i)] $x \in L$ is a set of the form $V^U_\alpha(x) = (U \cap L) \cup \big((\alpha, \omega_1) \times (U \cap \omega)\big)$ where $U$ is a neighbourhood of $x \in X_0$ and $\alpha < \omega_1$.
\item[(ii)] $\langle \alpha , n \rangle \in (\omega_1 \times \omega)$ is a set of the form $V_W(\langle \alpha , n \rangle) = W \times \{n\}$ where $W$ is a neighbourhood of $\alpha$ in $\omega_1$ with the usual topology.
\end{description}

Condition (i) guarantees that $X$ is $T_1$. Furthermore, $\omega_1 \times \omega$ is open in $X$ and for $x\in L$, if we let $U = \{x \} \cup f(\{x\})$. then for any $\alpha < \omega_1$, $V_\alpha ^U(x) \cap L = \{x\}$. That is, $L$ is closed and discrete in $X$.\\

\noindent {\bf $X$ is normal:} Let $Y, Z \subseteq  X$ closed and disjoint. Define $Y_L = Y \cap L$ and $Z_L = Z \cap L$ and for each $n \in \omega$, $Y_n = Y \cap (\omega_1 \times \{n\})$, $Z_n = Z \cap (\omega_1 \times \{n\})$. Since $Y \cap Z = \emptyset$ and $\omega_1 \times \{n\}$ is a copy of $\omega_1$ with the usual topology (for each $n \in \omega$), then we can find clopen sets $Y_n', Z_n' \subseteq \omega_1 \times \{n\}$ such that $Y_n' \cap Z_n' = \emptyset$, $Y_n \subseteq Y_n' $, $Z_n \subseteq Z_n'$ and so that for each $n\in \omega$, $Y_n'$ is cofinal in  $\omega_1 \times \{n\}$ if and only if $Y_n$ is cofinal in  $\omega_1 \times \{n\}$ and $Z_n'$ is cofinal in  $\omega_1 \times \{n\}$ if and only if $Z_n$ is cofinal in  $\omega_1 \times \{n\}$. This is possible since for each $n\in \omega$, $Y_n$ and $Z_n$ cannot be both cofinal (otherwise $Y_n \cap Z_n \neq \emptyset$). Let
$$\mathcal{Y} = Y_L \cup \bigcup _{n\in \omega} Y_n', \qquad \mathcal{Z} = Z_L \cup \bigcup _{n\in \omega} Z_n'$$
Observe $Y \subseteq \mathcal{Y}$, $Z \subseteq \mathcal{Z}$ and $\mathcal{Y} \cap \mathcal{Z} = \emptyset$.\\
\emph{Claim:} $\mathcal{Y}$ and $\mathcal{Z}$ are closed in $X$.\\
Indeed, if $\langle\alpha , m \rangle \in (\omega_1 \times \omega ) \smallsetminus \mathcal{Y}$, since $Y_m'$ is clopen in $\omega_1 \times \{m\}$, then there is $U$ open neighbourhood of $\langle\alpha , m \rangle$ in $\omega_1 \times \{m\}$ (and therefore open neighbourhood in $X$), such that $U \cap Y_m' = \emptyset$. Now, let $x \in L \smallsetminus \mathcal{Y}$ and assume that for each $U$ open neighbourhood of $x$ in $X_0$ and each $\alpha < \omega_1$, $V_\alpha ^U (x) \cap \mathcal{Y} \neq \emptyset$. This implies that for each $U$ open neighbourhood of $x$ in $X_0$ and each $\alpha < \omega_1$ there is some $n\in \omega$ such that $V_\alpha ^U(x) Y_n' \neq \emptyset$ and $Y_n'$ is cofinal in $\omega_1 \times \{n\}$. Then $Y_n$ is cofinal in $\omega_1 \times \{n\}$ and $V_\alpha ^U(x) Y_n \neq \emptyset$. Hence, $x \in \overline{Y} = Y$ which is a contradition. Thus, $\mathcal{Y}$ is closed. A similar argument shows that $\mathcal{Z}$ is closed.\\
	Since $Y_L$ and $Z_L$ are disjoint closed subsets of $X_0$ and $X_0$ is normal (recall $X_0$ is the space constructed in Example \ref{Tallexample}), then there exist disjoint open sets $U_Y$, $U_Z$ in $X_0$ such that $Y_L \subseteq U_Y$, $Z_L \subseteq U_Z$. Let
$$V_Y = (U_Y \cap Y) \cup \bigcup _{n \in U_Y \cap \omega} (\omega_1 \times \{n\}, \qquad V_Z = (U_Z \cap Z) \cup \bigcup _{n \in U_Z \cap \omega} (\omega_1 \times \{n\}.$$
Observe that $V_Y$ and $V_Z$ are disjoint open subsets in $X$ and $Y_L \subseteq V_Y$, $Z_L \subseteq V_Z$. Let $W_Y = \mathcal{Y} \cup (V_Y \smallsetminus \mathcal{Z})$, $W_Z = \mathcal{Z} \cup (V_Z \smallsetminus \mathcal{Y})$. Hence, $W_Y$ and $W_Z$ are open sets in $X$, $W_Y \cap W_Z = \emptyset$, and $y\ \subseteq W_Y$, $Z\subseteq W_Z$.\\

\noindent {\bf $X$ is not strongly star-Lindel\"of:} List $L = \{x_\alpha: \alpha < \omega_1\}$. Since $L$ is a closed discrete subset of $X_0$, for $\alpha < \omega_1$ let $D_\alpha$ be an open neighbourhood of $x_\alpha$ in $X_0$ such that $D_\alpha \cap L = \{x_\alpha\}$. Hence, 
$$\mathcal{U} = \{V_\alpha ^{D_\alpha}(x_\alpha): \alpha < \omega_1\} \cup \{\omega_1 \times \omega\} \in \mathcal{O}(X).$$
Assume $E \in [X]^{\omega}$, we show $St(E,\mathcal{U}) \neq X$. Since $E$ is countable, fix $\beta_0 , \beta_1 <\omega_1$ such that $sup\{\alpha: x_\alpha \in E \cap L\} <\beta_0$ and $sup\{\gamma:\langle \gamma,n\rangle \in E \textit{ for some } n \in \omega\} < \beta_1$. Let $\alpha = max\{ \beta_0 , \beta_1\}$ and observe $E \cap V_\alpha^{D_\alpha}(x_\alpha) = \emptyset$ Since $V_\alpha^{D_\alpha}(x_\alpha)$ is the only element of $\mathcal{U}$ that contains $x_\alpha$, then $x_\alpha \notin St(E, \mathcal{U})$. Thus, $X$ is not strongly star-Lindel\"of.\\

\noindent {\bf $X$ is star-Lindel\"of:} Let $\mathcal{U} \in \mathcal{O}(X)$ and define
$$M = \{n \in \omega: (\exists U \in \mathcal{U})(\exists \beta < \omega_1)[(\beta,\omega_1) \times \{n\} \subseteq U]\}.$$
For each $n \in M$ fix $U_n \in \mathcal{U}$ and $\beta_n < \omega_1$ such that $(\beta,\omega_1) \times \{n\} \subseteq U_n$. Put $\mathcal{V}' = \{U_n:n \in M\}$.\\
\emph{Claim:} $L \subseteq St(\bigcup \mathcal{V}',\mathcal{U})$.\\
Indeed, let $x \in L$, there is $U^x \in \mathcal{U}_n$ such that $x \in U^x$ and therefore, there is $U$ open neighbourhood of $x$ in $X_0$ and $\alpha < \omega_1$ such that $V_\alpha^U(x) \subseteq U^x$. Since $V_\alpha^U(x) \cap (\omega_1 \times \omega) = (\alpha , \omega_1) \times (U \cap \omega)$ and $U = N \cup f(N)
$ for some $N \subseteq L$, with $x \in N$, it holds true that $n \in f(N) \rightarrow n \in M$. Then, for $n \in f(N)$, $V_\alpha^U(x) \cap (\omega_1 \times \{n\}) \cap [(\beta_n,\omega_1)\times \{n\}] \neq \emptyset$. Thus, $V_\alpha^U(x) \cap U_n \neq \emptyset$. Hence, $U^x \cap U_n \neq \emptyset$. Therefore $x \in St(U_n, \mathcal{U}) \subseteq St(\bigcup \mathcal{V}',\mathcal{U})$. Now, $\omega_1 \times \omega$ is a countable union of strongly star compact spaces, then there is a countable $\mathcal{V}'' \subseteq \mathcal{U}$ such that $\omega_1 \times \omega \subseteq St(\bigcup \mathcal{V}'',\mathcal{U})$. If we let $\mathcal{V} = \mathcal{V}' \cup \mathcal{V}''$, then $St(\bigcup \mathcal{V},\mathcal{U}) = X$. $\blacksquare$\\

\begin{proposition}
\label{normalSMnotSSM} 
Assuming $2^{\aleph_0} = 2^{\aleph_1}$ and $\aleph_1 < \mathfrak{d}$ the space $X$ built in Example \ref{normalSLnotSSL} is normal, star-Menger, and is not either strongly star-Menger nor Dowker.
\end{proposition}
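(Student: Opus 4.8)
The statement splits into four claims: $X$ is normal, $X$ is star-Menger, $X$ is not strongly star-Menger, and $X$ is not Dowker. Normality is established during the construction in Example~\ref{normalSLnotSSL}, so nothing new is needed there. For ``not strongly star-Menger'' the plan is to use the strong form of the last part of Example~\ref{normalSLnotSSL}: it produces an open cover $\mathcal{U}$ of $X$ with $St(E,\mathcal{U})\neq X$ for every countable $E\subseteq X$. If $X$ were strongly star-Menger, feeding the constant sequence $\mathcal{U}_n=\mathcal{U}$ into the definition would give finite sets $F_n\subseteq X$ with $\{St(F_n,\mathcal{U}):n\in\omega\}$ a cover of $X$, whence $E:=\bigcup_n F_n$ would be a countable set with $St(E,\mathcal{U})=X$, a contradiction; equivalently, one may quote $SSM\to SSL$ from the diagram of star selection principles together with the fact that $X$ is not strongly star-Lindel\"of.

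For star-Mengerness I would apply Proposition~\ref{(SL,(<d,H))}. Example~\ref{normalSLnotSSL} already gives that $X$ is star-Lindel\"of, so it suffices to write $X$ as a union of fewer than $\mathfrak d$ Hurewicz subspaces. Listing $L=\{x_\xi:\xi<\omega_1\}$, put $Y_\xi:=\{x_\xi\}\cup([0,\xi]\times\omega)$. In the subspace topology each slice $[0,\xi]\times\{m\}$ is a copy of the compact ordinal $[0,\xi]$ and $\{x_\xi\}$ is a point, so $Y_\xi=\{x_\xi\}\cup\bigcup_{m\in\omega}\big([0,\xi]\times\{m\}\big)$ is $\sigma$-compact, hence Hurewicz. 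Since $\bigcup_{\xi<\omega_1}Y_\xi=L\cup(\omega_1\times\omega)=X$ and $\aleph_1<\mathfrak d$, Proposition~\ref{(SL,(<d,H))} yields that $X$ is star-Menger. (Note one cannot instead use the ``Lindel\"of'' results of Section~\ref{sectionsmallunions}: $X$ is not Lindel\"of, as it contains the uncountable closed discrete set $L$.)

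The real work is showing $X$ is not Dowker, i.e.\ that the normal space $X$ is countably paracompact; I would verify Ishikawa's characterization through a closed-shrinking argument. Let $\{U_n:n\in\omega\}$ be an increasing open cover of $X$. Three features of the topology matter: (a) $L$ is closed and discrete, so every subset of $L$ is closed in $X$; (b) each slice $\omega_1\times\{m\}$ is clopen in $X$ and is a copy of the countably compact space $\omega_1$, so there is a least $k_m$ with $\omega_1\times\{m\}\subseteq U_{k_m}$; (c) for $x\in L$ every $X_0$-neighbourhood of $x$ meets $\omega$ in an infinite set (by strong independence of the underlying family), so the closure of $\bigcup_{m\in S}(\omega_1\times\{m\})$ can contain points of $L$ when $S\subseteq\omega$ is infinite, while for $S$ finite this union is a finite union of clopen sets and hence closed. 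To neutralise (c) I would \emph{delay}: put $p_m:=\max\{k_m,m\}$, so $p_m\ge k_m$, $p_m<\omega$, and $\{m:p_m\le n\}$ is finite for each $n$; then set
\[
F_n:=\{x\in L: x\in U_n\}\ \cup\ \bigcup\{\,\omega_1\times\{m\}: p_m\le n\,\}.
\]
By (a) and (b) each $F_n$ is the union of a set closed in $X$ with finitely many clopen sets, hence closed; $F_n\subseteq U_n$ since the listed points of $L$ lie in $U_n$ by definition and $\omega_1\times\{m\}\subseteq U_{k_m}\subseteq U_n$ whenever $p_m\le n$; and $\bigcup_n F_n=L\cup(\omega_1\times\omega)=X$ because each $x\in L$ enters some $U_n$ and each $m$ has $p_m<\omega$. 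Thus $\{F_n\}$ is a closed cover refining $\{U_n\}$. Interpolating open sets $F_n\subseteq V_n\subseteq\overline{V_n}\subseteq U_n$ by normality and taking $B_n:=X\setminus\overline{V_n}$ gives, for the decreasing closed sets $A_n:=X\setminus U_n$, open sets $B_n\supseteq A_n$ with $\bigcap_n\overline{B_n}\subseteq\bigcap_n(X\setminus V_n)=\emptyset$; so $X$ is countably paracompact, hence not Dowker.

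The main obstacle is exactly phenomenon (c): points of $L$ have neighbourhoods meeting $\omega_1\times\{m\}$ for infinitely many $m$, so any closed cover whose $n$-th member already swallows all of $\omega_1\times\{m\}$ for infinitely many $m$ will have its closure creep into $L$ and fail to stay inside $U_n$. The bookkeeping $p_m=\max\{k_m,m\}$ is tailored so that each $F_n$ touches only finitely many slices yet $\bigcup_n F_n$ still exhausts $\omega_1\times\omega$—which is possible only because no single $F_n$ has to be cofinal in the $\omega$-direction, while in the $\omega_1$-direction countable compactness lets a single $U_n$ already contain a whole slice. Once this is set up, verifying facts (a)--(c), and the routine reduction of the general Ishikawa condition to the increasing-cover version used above, complete the proof.
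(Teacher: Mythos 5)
Your proposal is correct, but it reaches star-Mengerness by a genuinely different route than the paper. For that part the paper argues directly: it lists $L=\{x_\alpha:\alpha<\omega_1\}$, assigns to each $x_\alpha$ the function $f_\alpha(n)=\min\{i:\exists U\in\mathcal{U}_n\,\exists\beta\,[x_\alpha\in U\wedge(\beta,\omega_1)\times\{i\}\subseteq U]\}$, uses $\aleph_1<\mathfrak{d}$ to find $g$ with $g\not\le^* f_\alpha$ for all $\alpha$, and then selects finitely many witnessing sets $U_n^i$, $i\in M_n\cap g(n)$, whose stars cover $L$ (the part $\omega_1\times\omega$ being handled as in the star-Lindel\"of argument, as a countable union of strongly starcompact slices). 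You instead decompose $X=\bigcup_{\xi<\omega_1}Y_\xi$ with $Y_\xi=\{x_\xi\}\cup([0,\xi]\times\omega)$, observe each $Y_\xi$ is $\sigma$-compact hence Hurewicz, and invoke the paper's own Proposition \ref{(SL,(<d,H))} together with star-Lindel\"ofness of $X$ from Example \ref{normalSLnotSSL}; this is legitimate (no circularity, since that proposition is proved independently), it uses $\aleph_1<\mathfrak{d}$ in exactly the same way but packaged once and for all, and it covers $L$ and $\omega_1\times\omega$ simultaneously, whereas the paper's hands-on argument exhibits the explicit selection. For ``not strongly star-Menger'' you and the paper both simply quote $SSM\Rightarrow SSL$ against Song's cover, and for ``not Dowker'' your argument is the same strategy as the paper's (a closed shrinking of an arbitrary increasing open cover, using the characterization from \cite{MER}), but with a cleaner shrinking: you use countable compactness of each clopen slice $\omega_1\times\{m\}$ to swallow whole slices, finitely many per $F_n$ thanks to the delay $p_m=\max\{k_m,m\}$, while the paper shrinks to tails $[\gamma_i^n,\omega_1)\times\{i\}$ and then checks closedness by hand; both work, and your version makes the closedness of $F_n$ (subset of the closed discrete $L$ plus a finite union of clopen slices) immediate, modulo the routine one-line verification—via the subbasic sets $X_0\smallsetminus\{m\}$ from Example \ref{Tallexample}—that each individual slice really is closed in $X$. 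Your final interpolation to Ishikawa's condition is superfluous given the quoted characterization, but harmless.
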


\begin{proof}
It has been shown that $X$ is normal and not strongly star-Lindel\"of (in particular, $X$ is not strongly star-Menger). It remains to show that it is star-Menger and is not a Dowker space.\\

\noindent {\bf $X$ is star-Menger}: let $(\mathcal{U}_n: n \in \omega )$ be any sequence of open covers of $X$. Write $L = \{x_\alpha: \alpha < \omega_1\}$ and for each $\alpha < \omega_1$ and each $n \in \omega$, let $f_\alpha(n) = min\{i \in \omega: (\exists U \in \mathcal{U}_n)(\exists \beta < \omega_1)[x_\alpha \in U \wedge (\beta,\omega_1) \times \{i\} \subseteq U]\}$. Observe that for each $\alpha < \omega_1$, $f_\alpha : \omega \to \omega$ is well defined. Since $\{f_\alpha: \alpha < \omega_1\}$ has size less than $\mathfrak{d}$, there is a funtion $g \in \omega^\omega$ such that for all $\alpha < \omega_1: g \not \le ^* f_\alpha$. For $n \in \omega$ let 
$$M_n = \{i \in \omega: (\exists U \in \mathcal{U}_n)(\exists \beta < \omega_1)[(\beta,\omega_1) \times \{i\} \subseteq U]\}.$$
Now, for each $n \in \omega$ and each $i \in M_n$, fix $U_n ^i \in \mathcal{U}_n$ and $\beta_n ^i < \omega_1$ such that $(\beta _n ^i,\omega_1) \times \{i\} \subseteq U_n^i$ and let $\mathcal{V}_n = \{U_n^i : i \in M_n \cap g(n)\}$.\\
\emph{Claim:} $L \subseteq \bigcup \{St(\bigcup \mathcal{V}_n,\mathcal{U}_n):n \in \omega\}$.\\
Indeed, fix $x_\alpha \in L$. There is $n \in \omega$ such that $f_\alpha (n) < g(n)$. Hence, there are $U \in \mathcal{U}_n$ and $\beta < \omega_1$ such that $x_\alpha \in U$ and $(\beta,\omega_1) \times \{f_\alpha(n)\} \subseteq U$. Thus, $f_\alpha(n) \in M_n$ and $U_n ^{f_\alpha(n)} \in \mathcal{V}_n$. In addition, $U_n^{f_\alpha(n)} \cap U \neq \emptyset$. Hence, $x \in St(\bigcup\mathcal{V}_n,\mathcal{U}_n) \subseteq \bigcup \{St(\bigcup \mathcal{V}_n,\mathcal{U}_n):n \in \omega\}$.\\

\noindent {\bf $X$ it is not a Dowker space}: Let us recall the following characterization: A normal space $D$ is a Dowker space (see \cite{MER}) if, and only if, $D$ has a countable increasing open cover $\{U_n:n\in \omega\}$ such that there is no closed cover $\{F_n:n\in \omega\}$ of $D$ with $F_n \subseteq U_n$ for each $n\in \omega$. Hence, let $\{U_n:n\in \omega\}$ be any countable increasing open cover ($U_0 \subseteq U_1 \subseteq \cdots$) of $X$, we must find a countable cover of closed sets $\{F_n:n\in \omega\}$, such that for each $n\in \omega$, $F_n \subseteq U_n$.\\

\noindent For each $i\in \omega$ define $n_i = min\{n\in\omega:i\leq n \wedge (\exists \gamma < \omega_1)\big[ [\gamma,\omega_1) \times \{i\} \subseteq U_n\big]\}$. Observe that since $\{U_n:n\in \omega\}$ is a countable cover of $X$, $n_i$ is well defined for each $i\in \omega$. In addition, for each $n\in \omega$ and $i\in \omega$ with $i \le n_i\le n$ let 
$$\gamma_i^n = min\{\gamma < \omega_1: [\gamma,\omega_1) \times \{i\} \subseteq U_n\} \qquad (*)$$
Since for each $n\in \omega$, $U_n \subseteq U_{n+1}$, then $\gamma_i^n$ is well defined. Now, for $n\in \omega$ let $$F_n = \big( \bigcup _{i \le n} \{[\gamma_i^n,\omega_1) \times \{i\}: i\le n_i \le n\} \big) \cup (U_n \cap L).$$\\
\emph{Claim:} 
\begin{description}
\item[(1)] For each $n\in \omega$, $F_n$ is closed,
\item[(2)] For each $n\in \omega$, $F_n \subseteq U_n$,
\item[(3)] $\bigcup _{n\in \omega}F_n = X$.
\end{description}
Indeed, to show (1), fix $n\in \omega$. First assume $x \in (X \smallsetminus F_n) \cap (\omega_1 \times \omega)$. Hence $x = \langle \alpha , m \rangle$ for some $\alpha < \omega_1$ and $m \in \omega$. If $F_n \cap (\omega_1 \times \{m\}) = \emptyset$, any $U \subseteq \omega_1 \times \{m\}$ open neighbourhood of $x$ is disjoint from $F_n$. If $F_n \cap (\omega_1 \times \{m\}) \neq \emptyset$, then $\alpha < \gamma _m^n$ and for each $\beta < \alpha$, $(\beta,\alpha] \times \{m\}$ is an open neighbourhood of $x$ disjoint from $F_n$. Now, assume $x \in (X \smallsetminus F_n) \cap L$, let $N \subseteq L$ such that $N \cap F_n = \emptyset$ and $x \in N$. Observe that $U = N \cup f(N) \smallsetminus (n+1) = \big(N \cup f(N)\big) \cap \big(\bigcap_{j \le n+1}(X_0 \smallsetminus \{j\})\big)$ is an open neighbourhood of $x$ in $X_0$ (see condition (1) and (3) of Example  \ref{Tallexample}). Hence, for any $\alpha < \omega_1$, $V_\alpha ^U(x)$ ( $= [U \cap L ] \cup [(\alpha,\omega_1) \times (U \cap \omega)]$) is an open neighbourhood of $x$ in $X$ such that $V_\alpha^U(x) \cap F_n = \emptyset$ since $F_n \subseteq \omega_1 \times [0,n]$ and $V_\alpha^U(x) \cap (\omega_1 \times [0,n]) = \emptyset$. Thus, $F_n$ is closed.\\
To show (2), fix $n \in \omega$. If $x \in F_n \cap L$, then $x \in U_n$. If $x = \langle \alpha , m \rangle \in F_n \cap (\omega_1 \times \omega)$, then there is some $i \le n_i \le n$ such that $\langle \alpha, m \rangle \in [\gamma_i ^n,\omega_1) \times \{i\}$. Thus , $m = i$ and $[\gamma_i ^n,\omega_1) \times \{i\} \subseteq U_n$. Hence $F_n \subseteq U_n$.\\
Let us show (3). If $x \in X \cap L$, then there is some $n\in \omega$ such that $x\in U_n$. Hence, $x\in U_n \cap L \subseteq F_n$. If $x\in X \smallsetminus L$, there is some $i \in \omega$ such that $x \in \omega_1 \times \{i\}$. By $(*)$ and the fact that $U_n \subseteq U_{n+1}$, $\{\gamma_i^n:n\in \omega\}$ is a decreasing sequence of ordinals. Since $U_n:n\in\omega)$ covers $X$, there is some $m\in \omega$ such that $\gamma_i^m = 0$. Thus, $x \in F_m$.
\end{proof}

\vspace{.5cm}

\textsc{Department of Mathematics and Statistics, York University, 4700 Keele St. Toronto, ON M3J 1P3 Canada}\par\nopagebreak

\vspace{.1cm}
\textit{Email address}: J. Casas-de la Rosa: \texttt{olimpico.25@hotmail.com}

\vspace{.1cm}
\textit{Email address}: W. Chen-Mertens: \texttt{chenwb@gmail.com}

\vspace{.1cm}
\textit{Email address}: S. Garcia-Balan: \texttt{sergiogb@yorku.ca}

\end{document}